\author{}
\title{Structure of sets with nearly maximal Favard length}
\address{Department of Mathematics, Princeton University, Princeton, NJ 08544, USA}
\email{alanchang@math.princeton.edu}
\address{Department of Mathematics and Statistics\\ University of Jyv\"askyl\"a,
P.O. Box 35 (MaD)\\
FI-40014 University of Jyv\"askyl\"a\\
Finland}
\email{damian.m.dabrowski@jyu.fi}
\email{tuomas.t.orponen@jyu.fi}
\address{Research Unit of Mathematical Sciences, University of Oulu, P.O. Box 8000, FI-90014, University of Oulu, Finland}
\email{michele.villa@oulu.fi}
\date{\today}
\author[A. Chang]{Alan Chang}
\author[D. D\k{a}browski]{Damian D\k{a}browski}
\author[T. Orponen]{Tuomas Orponen}
\author[M. Villa]{Michele Villa}
\subjclass[2010]{28A75 (primary) 28A78 (secondary)}
\keywords{Favard length, Besicovitch projection theorem, Lipschitz graph}
\thanks{D.D. and T.O. are supported by the Academy of Finland via the project \emph{Incidences on Fractals}, grant No. 321896. T.O. is also supported by the Academy of Finland via the project \emph{Quantitative rectifiability in Euclidean and non-Euclidean spaces}, grant Nos. 309365, 314172. M.V. was supported by a starting grant of the University of Oulu.}
\newcommand{\R}{\mathbb{R}}
\newcommand{\N}{\mathbb{N}}
\newcommand{\spa}{\operatorname{span}}
\newcommand{\diam}{\operatorname{diam}}
\newcommand{\dist}{\operatorname{dist}}
\newcommand{\Fav}{\mathrm{Fav}}
\newcommand{\lip}{\mathrm{Lip}}
\newcommand{\clip}{\mathbf{C}_{\mathrm{lip}}}
\newcommand{\crofton}{2} %
\def\Barint_#1{\mathchoice
          {\mathop{\vrule width 6pt height 3 pt depth -2.5pt
                  \kern -8pt \intop}\nolimits_{#1}}%
          {\mathop{\vrule width 5pt height 3 pt depth -2.6pt
                  \kern -6pt \intop}\nolimits_{#1}}%
          {\mathop{\vrule width 5pt height 3 pt depth -2.6pt
                  \kern -6pt \intop}\nolimits_{#1}}%
          {\mathop{\vrule width 5pt height 3 pt depth -2.6pt
                  \kern -6pt \intop}\nolimits_{#1}}}
\numberwithin{equation}{section}
\theoremstyle{plain}
\newtheorem{thm}[equation]{Theorem}
\newtheorem*{"thm"}{"Theorem"}
\newtheorem{lemma}[equation]{Lemma}
\newtheorem{proposition}[equation]{Proposition}
\theoremstyle{definition}
\theoremstyle{remark}
\newtheorem{remark}[equation]{Remark}
\newcommand{\nref}[1]{(\hyperref[#1]{#1})}
\DeclareMathSymbol{\intop}  {\mathop}{mathx}{"B3}
\begin{document} 

\begin{abstract} Let $E \subset B(1) \subset \R^{2}$ be an $\mathcal{H}^{1}$ measurable set with $\mathcal{H}^{1}(E) < \infty$, and let $L \subset \R^{2}$ be a line segment with $\mathcal{H}^{1}(L) = \mathcal{H}^{1}(E)$. It is not hard to see that $\Fav(E) \leq \Fav(L)$. We prove that in the case of near equality, that is,
\begin{displaymath} \mathrm{Fav}(E) \geq \mathrm{Fav}(L) - \delta, \end{displaymath}
the set $E$ can be covered by an $\epsilon$-Lipschitz graph, up to a set of length $\epsilon$. The dependence between $\epsilon$ and $\delta$ is polynomial: in fact, the conclusions hold with $\epsilon = C\delta^{1/70}$ for an absolute constant $C > 0$. \end{abstract}

\maketitle

\tableofcontents

\section{Introduction}

Let $E \subset \R^{2}$ be $\mathcal{H}^{1}$ measurable with $\mathcal{H}^{1}(E) < \infty$. We recall the definition of Favard length:
\begin{displaymath} \Fav(E) = \int_{0}^{\pi} \mathcal{H}^{1}(\pi_{\theta}(E)) \, d\theta. \end{displaymath}
Here $\pi_{\theta} \colon \R^{2} \to \R$ is the orthogonal projection $\pi_{\theta}(x) = x \cdot (\cos \theta,\sin \theta)$. The definition of $\Fav(E)$ can be posed without the assumption $\mathcal{H}^{1}(E) < \infty$, but this hypothesis will be crucial for most of the statements below, and it will be assumed unless otherwise stated. A fundamental result in geometric measure theory is the Besicovitch projection theorem \cite{MR1513231} which relates Favard length and rectifiability: $\Fav(E) > 0$ if and only if $\mathcal{H}^{1}(E \cap \Gamma) > 0$ for some Lipschitz graph $\Gamma \subset \R^{2}$ -- in other words, $E$ is not purely $1$-unrectifiable.

The proof of the Besicovitch projection theorem is famous for being difficult to quantify, partly because of its reliance on the Lebesgue differentiation theorem: it is hard to decipher from the argument just how large the intersection $E \cap \Gamma$ is, and what the Lipschitz constant of $\Gamma$ is. In fact, it is non-trivial to even find the right question: for example, if $E \subset B(1)$, $\mathcal{H}^{1}(E) = 1$, and $\Fav(E) \geq \delta$ for some small but fixed constant $\delta > 0$, then it is not true that $\mathcal{H}^{1}(E \cap \Gamma) \geq \epsilon$ for some $\epsilon^{-1}$-Lipschitz graph $\Gamma \subset \R^{2}$, where $\epsilon = \epsilon(\delta) > 0$. We construct a relevant counterexample in Section \ref{sec:grid}.

In Theorem \ref{main}, we show that similar counterexamples are no longer possible if the assumption "$\Fav(E) \geq \delta$" is upgraded to "$\Fav(E) \geq 2\,\mathcal{H}^1(E) - \delta$" for a sufficiently small constant $\delta > 0$. The number "$2$" comes from the fact that $\Fav([0,1] \times \{0\}) = 2$ and that $[0,1] \times \{1\}$ has the maximal Favard length among sets of length unity (see \eqref{line-maximal-Favard}). 

\begin{thm}\label{main} For every $\epsilon > 0$ there exists $\delta > 0$ such that the following holds. Let $E \subset B(1)$ be an $\mathcal{H}^{1}$ measurable set with $\mathcal{H}^{1}(E) < \infty$, and assume that
\begin{equation}\label{form1} \Fav(E) \geq \Fav(L) - \delta, \end{equation}
where $L \subset \R^{2}$ is a line segment with $\mathcal{H}^{1}(L) = \mathcal{H}^{1}(E)$. Then, there exists an $\epsilon$-Lipschitz graph $\Gamma \subset \R^{2}$ such that $\mathcal{H}^{1}(E \cap \Gamma) \geq \mathcal{H}^{1}(E) - \epsilon$. One can take $\delta = \epsilon^{70}/{C}$ for an absolute constant ${C} > 1$.
\end{thm}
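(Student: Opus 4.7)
The plan is to establish Theorem~\ref{main} in three stages: a reduction to rectifiable $E$, the extraction of a sharp multiplicity bound from the Favard hypothesis, and the construction of the Lipschitz graph.

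First, I would reduce to the case in which $E$ is rectifiable. Writing $E = E_{r} \cup E_{u}$ via the Besicovitch decomposition, the Besicovitch projection theorem yields $\Fav(E_{u}) = 0$, hence $\Fav(E) = \Fav(E_{r}) \le 2\mathcal{H}^{1}(E_{r})$ by the Crofton inequality; combined with the hypothesis $\Fav(E) \ge 2\mathcal{H}^{1}(E) - \delta$ this forces $\mathcal{H}^{1}(E_{u}) \le \delta/2$. Absorbing this $\delta/2$ into the final $\epsilon$, we may assume that $E$ is rectifiable and still satisfies $\Fav(E) \ge 2\mathcal{H}^{1}(E) - \delta$.

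Next, for rectifiable $E$ with approximate tangent direction $\theta_{E}(x)$ at $\mathcal{H}^{1}$-a.e.\ $x$, set $N(\theta,t) := \#(E \cap \pi_{\theta}^{-1}(t))$. The area formula gives $\int_{\R} N(\theta, t)\, dt = \int_{E} |\cos(\theta - \theta_{E}(x))|\, d\mathcal{H}^{1}(x)$, which, integrated over $\theta \in [0, \pi]$, equals $2\mathcal{H}^{1}(E)$. Since $\Fav(E) = \int_{0}^{\pi}\!\!\int_{\R} \mathbf{1}_{\{N \ge 1\}}\, dt\, d\theta$, the hypothesis converts into the sharp multiplicity bound
\begin{equation*}
\int_{0}^{\pi}\!\!\int_{\R} \bigl(N(\theta,t) - 1\bigr)^{+}\, dt\, d\theta \;=\; 2\mathcal{H}^{1}(E) - \Fav(E) \;\le\; \delta,
\end{equation*}
so $\pi_{\theta}$ is essentially injective on $E$ outside a small set of $(\theta, t)$.

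The third stage is the main one, locating a good direction $\theta_{0}$ and building the graph. A Chebyshev/pigeonhole argument applied to the multiplicity integral supplies a direction $\theta_{0}$ for which (i) the slice $t \mapsto N(\theta_{0}, t)$ is essentially $\{0,1\}$-valued, and (ii) $\mathcal{H}^{1}(\pi_{\theta_{0}}(E))$ is close to $\mathcal{H}^{1}(E)$. Combining (i)--(ii) with the area formula at $\theta_{0}$ forces $|\cos(\theta_{0} - \theta_{E}(x))|$ to be close to $1$ for $\mathcal{H}^{1}$-most $x$, i.e., the tangents $\theta_{E}(x)$ concentrate near $\theta_{0}$. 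Using (i), after discarding a small subset, $E$ can be represented as a graph of a function $u$ over a subset of the line $\ell_{\theta_{0}}$ in direction $\theta_{0}$. To control the Lipschitz constant of $u$, I would exploit the mechanism that two portions of $E$ whose tangent directions differ by an angle $\alpha$ create overlapping projections in a $\theta$-arc of length $\sim \alpha$, contributing proportionally to the multiplicity defect; the smallness of this defect then forces the slopes of $u$ to be uniformly small, i.e., $u$ to be nearly linear.

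The main obstacle is the polynomial rate $\epsilon = C\delta^{1/70}$. Each of the three stages loses exponents: the Chebyshev step for selecting $\theta_{0}$, the Cauchy--Schwarz-type passage from tangent concentration to graph flatness, and, most delicately, a multi-scale stopping-time or hierarchical pigeonhole that upgrades the local "large-slope $\Rightarrow$ folding" mechanism into a uniform Lipschitz bound on $u$ valid at every pair of points and every scale. The precise exponent $1/70$ would emerge from the careful bookkeeping of these accumulated polynomial losses.
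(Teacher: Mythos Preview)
Your first two stages match the paper. The genuine gap is in stage three: property (ii) cannot be obtained by a Chebyshev or pigeonhole argument on the multiplicity integral. The bound $\int_0^\pi\!\int (N-1)^+\,dt\,d\theta \le \delta$ controls only the multiplicity excess, not the size of any individual projection; combining $\int_0^\pi \mathcal{H}^1(\pi_\theta(E))\,d\theta \ge 2\mathcal{H}^1(E)-\delta$ with the universal bound $\mathcal{H}^1(\pi_\theta(E)) \le \mathcal{H}^1(E)$ leaves a deficit of $(\pi-2)\mathcal{H}^1(E)$, which is of order one, so nothing forces any single $\theta_0$ to satisfy $\mathcal{H}^1(\pi_{\theta_0}(E))\approx\mathcal{H}^1(E)$. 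By the area formula, (ii) at $\theta_0$ is \emph{equivalent} to tangent concentration near $\theta_0$, so your deduction of tangent concentration from (ii) is circular. (For a concrete picture: if $E$ is a union of two short perpendicular segments, every projection has length at most $\mathcal{H}^1(E)/\sqrt{2}$; what actually rules this configuration out is that the segments create overlapping projections near one specific angle, and detecting that overlap quantitatively is the whole content of the proof.) A related issue: using (i) alone to discard the non-injective part of $E$ also requires an a priori Lipschitz bound over $\ell_{\theta_0}^{\perp}$ to convert ``$N(\theta_0,t)\ge 2$ on a small $t$-set'' into ``small $\mathcal{H}^1$-mass of $E$'', which again presupposes tangent control.

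The paper avoids this circularity by reversing the order. After decomposing $E$ into finitely many $\alpha$-Lipschitz ``minigraphs'' over $\sim\alpha^{-1}$ directions $v_k$, it first proves (Proposition~\ref{prop1}) that the portion of $E$ with tangent near any fixed $v_k$ is covered, up to small error, by a single $O(\alpha)$-Lipschitz graph; the argument runs through a maximal conical density, the Besicovitch alternative for high-density points (Lemma~\ref{lemma:besicovitch-alternative}), and the classical two-cones lemma for low-density points (Lemma~\ref{lemma2}). Only afterwards is tangent concentration established (Proposition~\ref{prop2}): if two coarse direction classes $F_k,F_l$ with well-separated directions each carry mass $\ge\alpha^{2\kappa}$, the graph structure just obtained is used to locate sub-arcs $G_k,G_l$ lying in transversal thin tubes, and a direct line-count gives $\eta\{\ell:\ell\cap G_k\neq\emptyset\neq\ell\cap G_l\}\gtrsim\alpha^7$, contradicting the multiplicity bound. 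Your ``large-slope $\Rightarrow$ folding'' heuristic is the right intuition for this last step, but making it quantitative requires the Lipschitz-graph covering as input, and neither step is a stopping-time argument on a single function $u$; the final graph $\Gamma$ only appears once both propositions have been applied.
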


Thus, if $\Fav(E)$ is nearly maximal, the Besicovitch projection theorem can be quantified in a very strong way, whereas the example constructed in Section \ref{sec:grid} shows that any similar conclusion fails completely if we make the weaker assumption $\Fav(E) \geq \delta$. However, it remains plausible that the assumption $\Fav(E) \geq \delta$ is sufficient to guarantee a quantitative version of Besicovitch's theorem under the additional assumption that $E$ is $1$-Ahlfors regular, or satisfies other "multi-scale $1$-dimensionality" hypotheses. For recent partial results, and more discussion on this question, see \cite{2021arXiv210400826D,MO,MR4323640,MR2500864}. The problem is closely related to Vitushkin's conjecture \cite{MR0229838} on the connection between analytic capacity and Favard length, see \cite{2017arXiv171200594C,dabrowski2022analytic}. 

We briefly mention another closely related topic: if $E \subset \R^{2}$ is self-similar and purely $1$-unrectifiable, then $\Fav(E) = 0$ by the Besicovitch projection theorem. It is an interesting and very popular question to attempt quantifying the (sharp) rate of decay at which $\Fav(E_{n}) \to 0$, where $E_{n}$ is the "$n^{th}$ iteration" of the self-similar set. For recent developments, see \cite{MR2727621,MR3188064,MR2652491,MR3129103,2020arXiv200303620C,MR2740020,MR3526481,2022arXiv220207555L,MR2641082,MR1907902}. 

\subsection{Outline of the paper}
A quick outline of the article is as follows:  in Section \ref{s:prelim} we introduce Crofton's formula and prove that line segments maximise Favard length. In Section \ref{s:proof-main} we show how to prove Theorem \ref{main} using two main propositions, Proposition \ref{prop1} and Proposition \ref{prop2}. The former allows us to cover a set with almost maximal Favard length by a bounded number of Lipschitz graphs with small constant. The latter says that, in fact, there can only be one such graph. These two propositions are then proven in Section \ref{s:prop1} and Section \ref{s:prop2}, respectively. 
Section \ref{sec:grid} contains the counterexample mentioned above Theorem \ref{main}. Finally, in
Appendix \ref{section:lines-spanned-by-rectifiable-curves} we give an exact formula for the measure of lines spanned by two rectifiable curves - this is used in Section \ref{s:prop2} but it might be of independent interest.

\subsection*{Acknowledgements} The paper was written while the authors were visiting the Hausdorff Research Institute for Mathematics in Bonn during the research trimester \emph{Interactions between Geometric measure theory, Singular integrals, and PDE}. We would like to thank the institute and its staff for creating this opportunity for collaboration.

\section{Measure theoretic preliminaries}\label{s:prelim}

\subsection{Notation} For $x \in \R^{d}$ and $r > 0$, the notation $B(x,r)$ stands for a closed ball of radius $r$ centred at $x$. For $A \subset \R^{d}$, we denote the cardinality of $A$ by $\#A$, and we write $A(r) := \{x \in \R^{d} : \dist(x,A) \leq r\}$, where "$\dist$" is Euclidean distance. For $f,g \geq 0$, we write $f \lesssim g$ if there exists an absolute constant $C > 0$ such that $f \leq Cg$. The notation $f \gtrsim g$ means the same as $g \lesssim f$, and $f \sim g$ is shorthand for $f \lesssim g \lesssim f$. If the constant $C > 0$ is allowed to depend on some parameter "$p$", we signify this by writing $f \lesssim_{p} g$.

\subsection{Integralgeometry and Crofton's formula}
One of the main tools is Crofton's formula for rectifiable sets, which states the following. If $E \subset \R^{2}$ is $\mathcal{H}^{1}$ measurable $1$-rectifiable set with $\mathcal{H}^{1}(E) < \infty$, then
\begin{equation}\label{crofton0} \mathcal{H}^{1}(E) = \frac{1}{2}\,\int_{0}^{\pi} \int_{\R} \#(E \cap \pi_{\theta}^{-1}\{t\}) \, dt \, d\theta. \end{equation}
The equation \eqref{crofton0} is false without the rectifiability assumption, but the inequality "$\geq$" remains valid in this case. This formula (and the inequality) is a special case of a more general relation between Hausdorff measure and integralgeometric measure for $n$-rectifiable sets in $\R^{d}$, see Federer's paper \cite[Theorem 9.7]{MR22594}, or \cite[Theorem 3.2.26]{federer}. We next rephrase the formula \eqref{crofton0} in slightly more abstract terms. We define the following measure $\eta$ on the family $\mathcal{A} := \mathcal{A}(2,1)$ of all affine lines in $\R^{2}$:
\begin{displaymath} \eta(\mathcal{L}) = \int_{0}^{\pi} \mathcal{H}^{1}(\{t \in \R : \pi_{\theta}^{-1}\{t\} \in \mathcal{L}\}) \, d\theta, \qquad \mathcal{L} \subset \mathcal{A}. \end{displaymath}
With this notation, the Crofton formula \eqref{crofton0} can be rewritten as
\begin{equation}\label{crofton} \mathcal{H}^{1}(E) = \frac{1}{2}\,\int_{\mathcal{L}(E)} \#(E \cap \ell) \, d\eta(\ell), \end{equation}
where
\begin{equation*}
    \mathcal{L}(E) := \{\ell \in \mathcal{A} : E \cap \ell \neq \emptyset\}.
\end{equation*}

\begin{lemma}[The line segment maximizes Favard length]
\label{lemma:line-maximal-Favard}
If $E \subset \R^{2}$ is $\mathcal{H}^{1}$ measurable, $\mathcal{H}^{1}(E) < \infty$, and $L \subset \R^{2}$ is a line segment with $\mathcal{H}^{1}(E) = \mathcal{H}^{1}(L)$, then 
\begin{equation}\label{line-maximal-Favard} 
\Fav(E) \leq \Fav(L) 
\end{equation}
and 
\begin{equation}\label{Favard-defect} 
\Fav(L) - \Fav(E) 
\geq
\int_{\mathcal{L}(E)} \#(E \cap \ell) - 1 \, d\eta(\ell).
\end{equation}
If $E$ is rectifiable, then equality holds in \eqref{Favard-defect}.
\end{lemma}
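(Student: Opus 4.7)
The plan is to derive everything from Crofton's formula \eqref{crofton} together with the elementary observation that $\Fav(E)$ is itself the $\eta$-measure of $\mathcal{L}(E)$. Indeed, by the very definition of $\eta$ and Fubini,
\begin{equation*}
\Fav(E) = \int_0^\pi \mathcal{H}^1(\pi_\theta(E))\,d\theta
= \int_0^\pi \mathcal{H}^1(\{t : \pi_\theta^{-1}\{t\} \in \mathcal{L}(E)\})\,d\theta
= \eta(\mathcal{L}(E)).
\end{equation*}
So $\Fav(E) = \int_{\mathcal{L}(E)} 1\, d\eta(\ell)$, and the trivial identity $\#(E\cap\ell) = 1 + (\#(E\cap\ell)-1)$ for $\ell \in \mathcal{L}(E)$ gives
\begin{equation*}
\int_{\mathcal{L}(E)} \#(E\cap\ell)\,d\eta(\ell) = \Fav(E) + \int_{\mathcal{L}(E)} \bigl(\#(E\cap\ell)-1\bigr)\,d\eta(\ell).
\end{equation*}

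Next, I compute $\Fav(L)$ exactly. The segment $L$ is rectifiable, so Crofton's formula \eqref{crofton} applies with equality. Moreover, any line $\ell$ other than the affine line containing $L$ meets $L$ in at most one point, and the set of lines containing $L$ has $\eta$-measure zero (a single direction $\theta$ contributes zero). Therefore $\#(L\cap\ell) = 1$ for $\eta$-a.e.\ $\ell \in \mathcal{L}(L)$, and \eqref{crofton} applied to $L$ yields
\begin{equation*}
2\mathcal{H}^1(L) = \int_{\mathcal{L}(L)} \#(L\cap\ell)\,d\eta(\ell) = \eta(\mathcal{L}(L)) = \Fav(L).
\end{equation*}
Since $\mathcal{H}^1(L) = \mathcal{H}^1(E)$, this gives $\Fav(L) = 2\mathcal{H}^1(E)$.

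Combining the two displays, I apply the general Crofton inequality (the "$\geq$" half of \eqref{crofton}, valid without rectifiability) to $E$:
\begin{equation*}
\Fav(L) = 2\mathcal{H}^1(E) \geq \int_{\mathcal{L}(E)} \#(E\cap\ell)\,d\eta(\ell) = \Fav(E) + \int_{\mathcal{L}(E)} \bigl(\#(E\cap\ell)-1\bigr)\,d\eta(\ell),
\end{equation*}
which is precisely \eqref{Favard-defect}. The maximality \eqref{line-maximal-Favard} then follows for free from the nonnegativity of the integrand $\#(E\cap\ell)-1 \geq 0$ on $\mathcal{L}(E)$. Finally, in the rectifiable case the inequality in \eqref{crofton} is an equality, so each "$\geq$" above becomes an "$=$" and \eqref{Favard-defect} is sharp.

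There is no real obstacle here: the entire argument is a bookkeeping exercise that packages Crofton's formula so that $\Fav$ is read as an $\eta$-measure and the "excess multiplicity" $\#(E\cap\ell)-1$ is identified as the source of the defect. The only point requiring care is the measure-zero justification that $\#(L\cap\ell)=1$ for $\eta$-a.e.\ $\ell$ meeting $L$, which is immediate from the definition of $\eta$ as an integral over directions.
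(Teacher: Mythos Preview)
Your proof is correct and follows essentially the same route as the paper: identify $\Fav(E)=\eta(\mathcal{L}(E))$, compute $\Fav(L)=2\mathcal{H}^1(L)=2\mathcal{H}^1(E)$ via Crofton, and then apply the Crofton inequality to $E$ to produce \eqref{Favard-defect}, with \eqref{line-maximal-Favard} and the rectifiable equality case falling out immediately. The only cosmetic difference is that you justify $\#(L\cap\ell)=1$ $\eta$-a.e.\ explicitly, whereas the paper absorbs this into the observation that both inequalities in the chain $\Fav(E)\le\int\#(E\cap\ell)\,d\eta\le 2\mathcal{H}^1(E)$ are equalities when $E=L$.
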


\begin{proof}
Suppose $E \subset \R^{2}$ is $\mathcal{H}^{1}$ measurable, $\mathcal{H}^{1}(E) < \infty$, and $L \subset \R^{2}$ is a line segment with $\mathcal{H}^{1}(E) = \mathcal{H}^{1}(L)$. Then
\begin{equation} 
\label{eq:fav-leq-H1}
\Fav(E) = \eta(\mathcal{L}(E)) = \int_{\mathcal{L}(E)} 1 \, d\eta(\ell) \leq \int_{\mathcal{L}(E)} \#(E \cap \ell) \, d\eta(\ell) \leq \crofton\mathcal{H}^{1}(E).
\end{equation}
If we replace $E$ with the line segment $L$, then equality holds in both inequalities above. Thus,
$\Fav(L)
=
\crofton\mathcal{H}^{1}(L)
=
\crofton\mathcal{H}^{1}(E)
$, which combined with \eqref{eq:fav-leq-H1} (for $E$) proves \eqref{Favard-defect}. 

Next, \eqref{line-maximal-Favard} follows from the fact that the right-hand side of \eqref{Favard-defect} is nonnegative. Finally, if $E$ is rectifiable, then the second inequality in \eqref{eq:fav-leq-H1} becomes an equality, which implies that equality holds in \eqref{Favard-defect}.
\end{proof}

\subsection{Coarea formula}

We then record another tool in the proof of Theorem \ref{main}. It is closely related to Crofton's formula, but only considers the intersections with lines in a fixed direction. The price to pay is that the tangent of the rectifiable set enters the formula. It is a generalization of the following standard fact: If $f : [a,b] \to \R$ is $\alpha$-Lipschitz, then
\[
\mathcal{H}^1(\{(t,f(t)) : t \in [a,b]\}) = \int_a^b \sqrt{1+f'(t)^2} \, dt
\leq 
\sqrt{1+\alpha^2}\ (b-a).
\]

\begin{lemma}[Coarea formula] Let $\alpha > 0$. Let $E \subset \R^2$ be a countable union of $\alpha$-Lipschitz graphs over the $x$-axis. %
Then,
\begin{equation}\label{area} \mathcal{H}^{1}(A) 
\leq 
\sqrt{1+\alpha^2} 
\int_{\mathbb R} \#(A \cap \pi_0^{-1}\{t\}) \, dt 
\end{equation}
for all $\mathcal{H}^{1}$ measurable subsets $A \subset E$. (Recall that $\pi_0 : \R^2 \to \R$ is the projection onto the $x$-axis.)
\end{lemma}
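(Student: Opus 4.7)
The plan is a standard reduction from the countable union to a single Lipschitz graph, followed by the classical arc-length/area formula for Lipschitz maps $\R \to \R^{2}$.

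First I would write $E = \bigcup_{i=1}^{\infty} \Gamma_{i}$ with each $\Gamma_{i} = \{(t,f_{i}(t)) : t \in I_{i}\}$ an $\alpha$-Lipschitz graph, and disjointify by setting $\Gamma_{i}' := \Gamma_{i} \setminus \bigcup_{j<i} \Gamma_{j}$, so that $E = \bigsqcup_{i} \Gamma_{i}'$. Given a measurable subset $A \subset E$, define $A_{i} := A \cap \Gamma_{i}'$, so $\mathcal{H}^{1}(A) = \sum_{i} \mathcal{H}^{1}(A_{i})$ and, since the $A_{i}$ are pairwise disjoint, $\sum_{i} \#(A_{i} \cap \pi_{0}^{-1}\{t\}) = \#(A \cap \pi_{0}^{-1}\{t\})$ for every $t \in \R$. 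This reduces the problem to estimating a single $\mathcal{H}^{1}(A_{i})$ by an integral over the base.

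For each $i$, I would consider the parametrization $\phi_{i} \colon I_{i} \to \R^{2}$, $\phi_{i}(t) := (t,f_{i}(t))$, which is injective with $|\phi_{i}(s) - \phi_{i}(t)| \leq \sqrt{1+\alpha^{2}}\,|s-t|$. Let $T_{i} := \pi_{0}(A_{i}) \subset I_{i}$; since $\phi_{i}$ is a bi-Lipschitz bijection between $I_{i}$ and $\Gamma_{i}$, we have $A_{i} = \phi_{i}(T_{i})$ and $T_{i}$ is Lebesgue measurable. By the area formula applied to the Lipschitz map $\phi_{i}$ (using that $\phi_{i}$ is injective and that $f_{i}$ is differentiable a.e.\ with $|f_{i}'| \leq \alpha$), I get
\[
\mathcal{H}^{1}(A_{i}) = \int_{T_{i}} \sqrt{1 + f_{i}'(t)^{2}} \, dt \leq \sqrt{1+\alpha^{2}} \, \mathcal{L}^{1}(T_{i}).
\]
Since for each $t \in T_{i}$ there is at least one point in $A_{i} \cap \pi_{0}^{-1}\{t\}$, one has $\mathcal{L}^{1}(T_{i}) = \int_{\R} \mathbf{1}_{T_{i}}(t)\,dt \leq \int_{\R} \#(A_{i} \cap \pi_{0}^{-1}\{t\})\,dt$.

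Summing over $i$ and interchanging sum and integral (monotone convergence, all terms nonnegative) gives
\[
\mathcal{H}^{1}(A) = \sum_{i} \mathcal{H}^{1}(A_{i}) \leq \sqrt{1+\alpha^{2}} \int_{\R} \sum_{i} \#(A_{i} \cap \pi_{0}^{-1}\{t\})\, dt = \sqrt{1+\alpha^{2}} \int_{\R} \#(A \cap \pi_{0}^{-1}\{t\})\, dt,
\]
which is \eqref{area}. The only mildly delicate point is the measurability of $T_{i}$ and the justification that the area formula applies to a measurable subset of a Lipschitz graph; both follow from the fact that $\phi_{i}$ is a bi-Lipschitz homeomorphism onto its image, so I do not expect any genuine obstacle.
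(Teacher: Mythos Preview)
Your argument is correct. The disjointification of the countable union, the application of the arc-length formula $\mathcal{H}^{1}(\phi_{i}(T_{i})) = \int_{T_{i}} \sqrt{1+f_{i}'(t)^{2}}\,dt$ to each piece, and the final summation via monotone convergence all go through without issue; the measurability of $T_{i}$ is indeed handled by the bi-Lipschitz property of $\phi_{i}$, as you note.

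By way of comparison, the paper does not spell out any of this: it simply cites the coarea formula for rectifiable sets (Federer, Theorem 3.2.22). That formula, applied to $\pi_{0}$ restricted to the $1$-rectifiable set $A$, gives directly
\[
\int_{A} J\pi_{0}(x)\, d\mathcal{H}^{1}(x) = \int_{\R} \#(A \cap \pi_{0}^{-1}\{t\})\, dt,
\]
and the coarea factor satisfies $J\pi_{0}(x) = |\pi_{0}(\tau(x))| \geq (1+\alpha^{2})^{-1/2}$ at $\mathcal{H}^{1}$-a.e.\ point of an $\alpha$-Lipschitz graph. Your route is more elementary in that it only invokes the classical area formula for Lipschitz maps $\R \to \R^{2}$ (essentially the preamble displayed just above the lemma in the paper) rather than the general coarea machinery for rectifiable sets; the price is the explicit disjointification step, which the general theorem absorbs automatically.
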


\begin{proof}
This follows from the coarea formula for rectifiable sets. (See, e.g., \cite[Theorem 3.2.22]{federer} or \cite[Theorem 5.4.9]{Krantz-Parks}.)
\end{proof}

\section{Proof of Theorem \ref{main} in two main steps}\label{s:proof-main}
In this section we prove our main result using Proposition \ref{prop1} and Proposition \ref{prop2} introduced below. The former says that we can cover all of $E$, save for a tiny exceptional set, by a union of boundedly many Lipschitz graphs with small constant. The latter says that, in fact, there can be only one Lipschitz graph with small constant covering most of $E$, otherwise we run into contradiction with the assumption of almost maximal Favard length.

\subsection{Step 1. First reductions}
Let $E \subset \R^{2}$ be a Borel set with $\mathcal{H}^{1}(E) < \infty$. We start with the following simple lemma. 
 \begin{lemma}
\label{lemma:reduction-finite-disjoint-C1-curves}
It suffices to prove Theorem \ref{main} under the additional assumption that $E$ is a finite union of disjoint $C^1$ curves.
\end{lemma}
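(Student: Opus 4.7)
\emph{The plan.} The reduction is a two-step approximation: first pass from $E$ to its $1$-rectifiable part, then approximate this by a finite disjoint union of $C^{1}$ arcs. The key monotonicity driving both steps is that subsets inherit Favard deficit: if $E' \subset E$ and $L, L'$ are segments of lengths $\mathcal{H}^{1}(E), \mathcal{H}^{1}(E')$, then by subadditivity of Favard length together with Lemma \ref{lemma:line-maximal-Favard},
\[
\Fav(E) \leq \Fav(E') + \Fav(E \setminus E') \leq \Fav(E') + 2\mathcal{H}^{1}(E \setminus E') = \Fav(E') + \Fav(L) - \Fav(L'),
\]
which rearranges to $\Fav(L') - \Fav(E') \leq \Fav(L) - \Fav(E) \leq \delta$.

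\emph{Two approximations.} To discard the purely unrectifiable part, write $E = E_{r} \sqcup E_{u}$ with $E_{r}$ rectifiable; Besicovitch's theorem gives $\Fav(E_{u}) = 0$, so $\Fav(E) \leq \Fav(E_{r}) \leq 2\mathcal{H}^{1}(E_{r})$, forcing $\mathcal{H}^{1}(E_{u}) \leq \delta/2$. To approximate the remaining rectifiable $E_{r}$, the standard structure theorem for $1$-rectifiable sets in $\R^{2}$ represents $E_{r}$, up to an $\mathcal{H}^{1}$-null set, as a countable disjoint union of Borel subsets of $C^{1}$ curves; I would retain finitely many pieces to cover all but an auxiliary $\tau > 0$ of the mass, approximate each such piece from within by a finite union of closed $C^{1}$ sub-arcs via inner regularity, and perturb endpoints to enforce pairwise disjointness across pieces, obtaining $E' \subset E_{r}$ which is a finite disjoint union of $C^{1}$ arcs and satisfies $\mathcal{H}^{1}(E \setminus E') \leq \delta/2 + 2\tau$. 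By the monotonicity above, $E'$ still has Favard deficit at most $\delta$.

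\emph{Conclusion.} Applying Theorem \ref{main} in its reduced form to $E'$ with tolerance $\epsilon/2$ yields an $(\epsilon/2)$-Lipschitz graph $\Gamma$ with $\mathcal{H}^{1}(E' \cap \Gamma) \geq \mathcal{H}^{1}(E') - \epsilon/2$, provided $\delta$ is below the reduced threshold $(\epsilon/2)^{70}/C_{0}$. Choosing $\tau$ so that $\delta/2 + 2\tau < \epsilon/2$ gives $\mathcal{H}^{1}(E \cap \Gamma) \geq \mathcal{H}^{1}(E) - \epsilon$, and $\Gamma$ is in particular $\epsilon$-Lipschitz; absorbing numerical factors restores the polynomial dependence $\delta = \epsilon^{70}/C$ for a larger absolute $C$.

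The main subtlety is the genuine disjointness in the second approximation: decomposing a rectifiable set into $C^{1}$ pieces is textbook, but producing a \emph{finite disjoint} family (rather than one disjoint only $\mathcal{H}^{1}$-a.e., given that two distinct $C^{1}$ curves in $\R^{2}$ can share a positive-measure sub-arc) requires careful use of inner regularity together with endpoint perturbations. Everything else is routine bookkeeping, and the polynomial relation $\delta \lesssim \epsilon^{70}$ survives thanks to the sharp subset-monotonicity above.
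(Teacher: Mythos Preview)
Your overall strategy---pass to the rectifiable part via Besicovitch, then approximate by finitely many disjoint $C^{1}$ arcs---matches the paper's, and your subset-monotonicity inequality $\Fav(L')-\Fav(E')\le\Fav(L)-\Fav(E)$ is a clean and correct observation. But there is a genuine gap in the second step: you write that each Borel piece $B_{j}\subset\gamma_{j}$ can be ``approximated from within by a finite union of closed $C^{1}$ sub-arcs via inner regularity.'' Inner regularity of $\mathcal{H}^{1}$ on a curve only produces compact subsets, and a compact subset of an arc need not contain \emph{any} sub-arc (think of a fat Cantor set sitting on a line segment). So in general there is no finite union of arcs $E'\subset E_{r}$ with $\mathcal{H}^{1}(E_{r}\setminus E')$ small, and your construction of $E'$ as written does not go through.

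The paper avoids this by approximating from \emph{above} rather than below: it takes the full curves $\bar{E}=\bigcup_{j=1}^{M_{1}}\gamma_{j}$, having first arranged that $\mathcal{H}^{1}(E\cap\gamma_{j})\ge(1-\eta)\mathcal{H}^{1}(\gamma_{j})$, so that $\mathcal{H}^{1}(\bar{E})\le(1-\eta)^{-1}\mathcal{H}^{1}(E)$ while $\Fav(\bar{E})\ge\Fav(E)-\eta$. This sidesteps the issue entirely, since whole $C^{1}$ curves are automatically arcs. Your monotonicity idea can be salvaged in the same spirit: replace the subset requirement $E'\subset E_{r}$ by a symmetric-difference approximation $\mathcal{H}^{1}(E'\bigtriangleup E_{r})<\tau$ (obtained e.g.\ by covering density points of $E_{r}\cap\gamma_{j}$ by short sub-arcs), at the cost of the deficit becoming $\delta+O(\tau)$ rather than exactly $\delta$; this is harmless for the final bookkeeping. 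The ``endpoint perturbation'' step you flag as the main subtlety is, by contrast, routine.
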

\begin{proof}
We may assume that $E \subset B(1)$ is rectifiable, because by the Besicovitch projection theorem, the rectifiable part of $E$ continues to satisfy all the assumptions of Theorem \ref{main} (with the same constant $\delta > 0$). By this assumption, $\mathcal{H}^{1}$ almost all of $E$ can be covered by a countable union of $C^{1}$-curves. Decomposing the curves further, we may assume that they are disjoint, and for any given $\eta > 0$ we may write
\begin{displaymath} E = \bigcup_{j = 1}^{M_1} (\gamma_{j} \cap E) \cup S, \end{displaymath}
where $\mathcal{H}^{1}(S) \leq \eta$, and $\mathcal{H}^{1}(E \cap \gamma_{j}) \geq (1 - \eta)\mathcal{H}^{1}(\gamma_{j})$. Now, the set $\bar{E} := \bigcup_{j = 1}^{M_1} \gamma_{j}$ satisfies 
\begin{displaymath} \mathcal{H}^{1}(\bar{E}) \leq (1 - \eta)^{-1}\mathcal{H}^{1}(E) \quad \text{and} \quad \Fav(\bar{E}) \geq \Fav(E) - \eta, \end{displaymath}
and is additionally a finite union of disjoint $C^{1}$-curves. If Theorem \ref{main} is already known under this additional assumption, we may now infer that $\mathcal{H}^{1}(\bar{E} \, \setminus \, \Gamma) \leq \epsilon$, where $\Gamma$ is an $\epsilon$-Lipschitz graph. But then also $\mathcal{H}^{1}(E \, \setminus \, \Gamma) \leq \mathcal{H}^{1}(E \, \setminus \, \bar{E}) + \mathcal{H}^{1}(\bar{E} \, \setminus \, \Gamma) \leq \eta+\epsilon$, and Theorem \ref{main} follows for $E$ by choosing the parameters $\epsilon,\eta$ appropriately.
\end{proof}

\subsection{Step 2. Minigraphs and how to merge them} By Lemma \ref{lemma:reduction-finite-disjoint-C1-curves}, in the sequel we may assume that $E$ is a finite union of disjoint $C^{1}$-curves $\gamma_{1},\ldots,\gamma_{M_1}$.  We further chop up each curve $\gamma_{j}$ into connected pieces whose tangent varies by less than $\alpha$, where $\alpha$ is a small constant depending on $\epsilon$ fixed later on (see \eqref{eq:alphadef}). At this point, we have managed to write $E$ as a finite union of disjoint $\alpha$-Lipschitz graphs $\gamma_{1},\ldots,\gamma_{M_1'}$, where $M_1 \leq M_1' < + \infty$. Each of these graphs will be called a \emph{minigraph}, and their collection is denoted $\mathcal{E}$. The main task in Theorem \ref{main} is to combine the minigraphs into bigger graphs.

To begin with, each of the minigraphs is an $\alpha$-Lipschitz graph over some line of the form
\begin{displaymath} \spa(\cos(k\pi /M_2),\sin(k\pi/M_2)), \qquad 0 \leq k \leq M_2 \sim \alpha^{-1}. \end{displaymath}
The vector $v_{k} := (\cos(k\pi /M_2),\sin(k\pi/M_2))$ will be called the \emph{direction} of the minigraph (if there are several suitable vectors for one minigraph, fix any one of them; we will only need to know that each minigraph is an $\alpha$-Lipschitz graph over the line spanned by its direction). Statements about the (relative) angles of minigraphs should always be interpreted as statements about the relative angles of  the direction vectors $v_k$. 

For $k \in \{0,\ldots,M_2\}$ fixed, we write $\mathcal{E}_{k} \subset \mathcal{E}$ for the subset of minigraphs with direction $v_{k}$. We suggest that the reader visualise the minigraphs as line segments $I$ with $\angle(I,\spa(v_k)) \leq \alpha$.  It seems likely that Theorem \ref{main} could be reduced to the case where $E$ is a finite union of line segments, but employing the minigraphs seems to spare us some unnecessary steps.

We write $E_{k} := \cup \mathcal{E}_{k}$. Thus 
\begin{equation}\label{e:def-E_j}
E = E_{0} \cup \ldots \cup E_{M_2}.
\end{equation}
It turns out that, except for a small error, each set $E_{k}$ is covered by a single Lipschitz graph with constant $\sim \alpha$ over $\spa(v_k)$. Indeed, note that Lemma \ref{lemma:line-maximal-Favard} and \eqref{form1} together imply
$  
\int_{\mathcal{L}(E)} \#(E \cap \ell) - 1 \, d\eta(\ell) \leq \delta. 
$
Then we have the following proposition, whose proof will be carried out in Section \ref{s:prop1}.
\begin{proposition}\label{prop1} 
There exist absolute constants $C_0, \alpha_0\in (0,1)$ and $\mathbf{C}_{\mathrm{lip}} > 1$ such that the following holds. Let $\delta,\epsilon\in (0,1)$ and $\alpha \in (0,\alpha_{0})$ be such that $\delta \le C_0 \alpha^{3}\epsilon^{2}$. Let $E \subset B(1)$ be a set with $\mathcal{H}^{1}(E) < \infty$ of the form 
\begin{equation*}
    E = \bigcup_{\gamma\in\mathcal{E}} \gamma,
\end{equation*}
where $\mathcal{E}$ is a finite collection of disjoint $\alpha$-Lipschitz graphs over a fixed line $L \subset \R^{2}$.  Assume further that $E$ satisfies  
\begin{equation}\label{form5}  
\int_{\mathcal{L}(E)} \#(E \cap \ell) - 1 \, d\eta(\ell) \leq \delta. 
\end{equation} 
Then, there exists a $\mathbf{C}_{\mathrm{lip}}\alpha$-Lipschitz graph $\Gamma$ over $L$, such that 
\begin{displaymath} \mathcal{H}^{1}(E \, \setminus \, \Gamma) \leq \epsilon. \end{displaymath}
\end{proposition}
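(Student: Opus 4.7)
The plan is to reduce to the case where $L$ is the $x$-axis, extract from \eqref{form5} a quantitative bound on the vertical overlap of the minigraphs, and then construct $\Gamma$ as a regularised lower envelope of the $f_\gamma$'s.

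After a rotation I assume $L = \R \times \{0\}$ and write each $\gamma \in \mathcal{E}$ as the graph of an $\alpha$-Lipschitz function $f_\gamma \colon I_\gamma \to \R$ on an interval $I_\gamma \subset [-1,1]$. The first observation is that for $\theta \in [0,\pi]$ outside a neighbourhood of $\pi/2$ of length $\sim \alpha$, every line $\pi_\theta^{-1}\{t\}$ meets each minigraph in at most one point, so $\#(E \cap \pi_\theta^{-1}\{t\})$ equals the number of minigraphs crossed. In particular, for such $\theta$,
\begin{displaymath}
\int_\R (\#(E \cap \pi_\theta^{-1}\{t\}) - 1)_+ \, dt = \sum_{\gamma \in \mathcal{E}} |\pi_\theta(\gamma)| - |\pi_\theta(E)|.
\end{displaymath}
Averaging \eqref{form5} over a narrow band of $\theta$'s around $\theta = 0$ (the direction perpendicular to $L$) and using that $\pi_\theta(\gamma)$ is close to $I_\gamma$ for $|\theta|$ small (with shifts controlled by $|\theta|$ since $E \subset B(1)$), I extract a bound of the form
\begin{equation*}
\int_\R (N_0(t) - 1)_+ \, dt \;\lesssim\; \delta/\alpha, \qquad N_0(t) := \#\{\gamma \in \mathcal{E} : t \in I_\gamma\}.
\end{equation*}

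Next, define the lower envelope $F(t) := \min\{f_\gamma(t) : t \in I_\gamma\}$ on $\pi_0(E)$. As a pointwise minimum of $\alpha$-Lipschitz functions, $F$ is $\alpha$-Lipschitz on each connected component of $\pi_0(E)$. The coarea formula \eqref{area} bounds
\begin{equation*}
\mathcal{H}^1(E \setminus \mathrm{graph}(F)) \;\leq\; \sqrt{1+\alpha^2}\int_\R (N_0(t) - 1)_+ \, dt \;\lesssim\; \delta/\alpha,
\end{equation*}
which is at most $\epsilon$ under the hypothesis $\delta \leq C_0 \alpha^3 \epsilon^2$ (with room to spare). It remains to extend $F$ to a $\mathbf{C}_{\mathrm{lip}}\alpha$-Lipschitz function $\tilde F$ on a single interval containing $\pi_0(E)$. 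For this I apply \eqref{form5} to transverse, non-near-vertical lines: if two minigraphs lie in distinct components of $\pi_0(E)$ separated by horizontal gap $g$ and with $F$-values differing by substantially more than $\mathbf{C}_{\mathrm{lip}}\alpha \cdot g$, the $\eta$-measure of lines crossing both minigraphs is bounded below by an explicit function of $g$ and the height mismatch, which will exceed $\delta$ for small enough $\delta$, a contradiction. Piecewise-linear interpolation across the gaps followed by a McShane extension produces $\tilde F$, and $\Gamma := \mathrm{graph}(\tilde F)$ is the graph sought.

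The main obstacle is the quantitative Cauchy--Crofton estimate used in the gap-extension step: one needs a sharp lower bound for $\eta(\{\ell : \ell \cap \gamma \neq \emptyset, \ \ell \cap \gamma' \neq \emptyset\})$ in terms of the horizontal gap and vertical mismatch between two $\pi_0$-disjoint $\alpha$-Lipschitz minigraphs. Combining this with a complementary argument that discards a small "bad" collection of minigraphs (those whose $\pi_0$-projections participate in high-multiplicity stacks) and controls their total $\mathcal{H}^1$-length via the Crofton sum is what ultimately forces the quantitative threshold $\delta \leq C_0 \alpha^3 \epsilon^2$ in the hypothesis.
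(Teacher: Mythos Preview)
Your lower-envelope strategy is genuinely different from the paper's, but the key estimate $\int_\R (N_0(t)-1)_+\,dt \lesssim \delta/\alpha$ is false, and this breaks the argument. Take $E$ to be $n$ horizontal segments of length $n^{-2}$ at heights $0,\tfrac{1}{n},\ldots,\tfrac{n-1}{n}$, all sharing the $\pi_0$-projection $[0,n^{-2}]$. These are $\alpha$-Lipschitz over $L$ for \emph{any} $\alpha\in(0,\alpha_0)$, so fix $\alpha=\alpha_0/2$. Then $\int(N_0-1)_+=(n-1)n^{-2}\sim n^{-1}$, whereas a short computation gives $\delta=2\mathcal{H}^1(E)-\Fav(E)\sim n^{-2}$: for $|\theta|\gtrsim n^{-1}$ the projected segments $\pi_\theta(\gamma)$ are pairwise disjoint, so the entire contribution to \eqref{form5} sits in a $\theta$-window of width $\sim n^{-1}$, not $\sim\alpha$. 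Thus the averaging step cannot recover $\int(N_0-1)_+$ from \eqref{form5}; the width of the $\theta$-band over which vertical stacking is visible is governed by the \emph{vertical separation} of the minigraphs, a quantity your argument never tracks. The gap-extension step suffers from the same defect: two $\pi_0$-disjoint minigraphs of length $r$ with horizontal gap $g$ and height mismatch $h\gg\alpha g$ span a family of lines of $\eta$-measure only $\sim r^2/\sqrt{g^2+h^2}$, which need not exceed $\delta$ when $r$ is small. You gesture at discarding short or high-multiplicity minigraphs, but making that quantitative is precisely the multi-angle bookkeeping the envelope approach was meant to avoid.

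The paper's proof is organised around the maximal conical density $\Theta^*_{E,\beta}(x)=\sup_{r>0}r^{-1}\mathcal{H}^1(E\cap\mathcal{C}_\beta(x,r))$ with $\beta\sim\alpha$. First, the Besicovitch alternative (for each $x$, either a set of directions $\theta$ of measure $\gtrsim H^{-1}$ satisfies $\#(E\cap\ell_{x,\theta})\ge2$, or $E$ concentrates in a heavy tube through $x$) combined with \eqref{form5} and the coarea inequality shows that $\mathcal{H}^1(\{x:\Theta^*_{E,\beta}(x)\ge\varepsilon\})\lesssim\delta/(\varepsilon\alpha^2)$. Second, the classical two-cones argument packs the low-density set $\{x:\Theta^*_{E,\beta}(x)<\varepsilon\}$ into a single $2\beta$-Lipschitz graph up to error $\lesssim\varepsilon/\beta$. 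Taking $\varepsilon\sim\alpha\epsilon$ balances the two errors and produces exactly the threshold $\delta\le C_0\alpha^3\epsilon^2$. The crucial difference from your approach is that the cone $\mathcal{C}_\beta(x,r)$ automatically aggregates lines over an angular window of width $\sim\alpha$ \emph{anchored at each point $x$}, which is what ties \eqref{form5} to local Lipschitz structure; fixing a single global projection direction, as $N_0$ does, discards this information.
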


\subsection{Step 3. There can only be one graph}\label{ss:only-one-graph}
In Proposition \ref{prop1} we managed to pack a majority of each set $E_{j}$ (as defined in \eqref{e:def-E_j}) to a Lipschitz graph of constant $\sim \alpha$, up to errors which tend to zero as $\delta \to 0$ in the main assumption \eqref{form1}. However, at this point there might be up to $\sim \alpha^{-1}$ distinct Lipschitz graphs, and to prove Theorem \ref{main}, we would (roughly speaking) like to reduce their number to one. That this should be possible is not hard to believe: if $E$ consists of several distinct Lipschitz graphs of substantial measure, which nevertheless cannot be fit into a single Lipschitz graph, then $\Fav(E)$ cannot possibly be maximal.\\

\noindent
We turn to the details. 
We recall the "given" constant $\epsilon > 0$ from the statement of Theorem \ref{main}, and we set \begin{equation*}
    \delta := \frac{\epsilon^{70}}{\mathbf{C}_{\mathrm{thm}}}
\end{equation*}
for a sufficiently large absolute constant $\mathbf{C}_{\mathrm{thm}} > 1$. We define also
\begin{equation}\label{eq:alphadef}
\alpha := \bigg(\frac{\epsilon}{\mathbf{C}_{\mathrm{alp}}}\bigg)^{10} 
\end{equation}
for some universal $\mathbf{C}_{\mathrm{alp}}>1$. The universal constant $\mathbf{C}_{\mathrm{thm}}$ will depend on $\mathbf{C}_{\mathrm{alp}}$, whereas $\mathbf{C}_{\mathrm{alp}}$ depends only on $\mathbf{C}_{\mathrm{lip}}$ and another constant $\mathbf{C}_{\mathrm{sep}}$, which is introduced below. We record that
\begin{equation}\label{tform1} 
    \alpha^{7} = \mathbf{C}_{\mathrm{alp}}^{-70}\epsilon^{70} = \mathbf{C}_{\mathrm{thm}} \mathbf{C}_{\mathrm{alp}}^{-70} \cdot \delta. 
\end{equation}

Recall, once more, the decompositions $\mathcal{E} = \mathcal{E}_{0} \cup \ldots \cup \mathcal{E}_{M_2}$ and $E = E_{0} \cup \ldots \cup E_{M_2}$ from the previous subsection: this decomposition depends on the parameter $\alpha$ fixed above.
In addition to the decomposition $E = E_{0} \cup \ldots \cup E_{M_2}$, we will also need another, coarser, decomposition of $E$ in this section. Write $\kappa := \tfrac{1}{10}$, fix $M_3 \sim \alpha^{-\kappa}$, and decompose $\mathcal{E} = \mathcal{F}_{0} \cup \ldots \cup \mathcal{F}_{M_3}$ in such a way that 
\begin{itemize}
\item each $\mathcal{F}_{k}$ is a union of finitely many consecutive families $\mathcal{E}_{j}$, and
\item $\mathcal{F}_{k}$ contains those minigraphs whose direction makes an angle $\leq \alpha^{\kappa}$ with $w_{k} = (\cos(k\pi/M_{3}),\sin(k\pi/M_{3}))$, for $0 \leq k \leq M_3$.
\end{itemize}
We write 
\begin{displaymath} F_{k} := \cup \mathcal{F}_{k}, \qquad 0 \leq k \leq M_3 \sim \alpha^{-\kappa}. \end{displaymath}
At this point, we consider two distinct cases. Let $\mathbf{C}_{\mathrm{sep}}$ be a large constant depending only on the absolute constant $\mathbf{C}_{\mathrm{lip}}$ appearing in Proposition \ref{prop1} (the letters "$\mathrm{sep}$" stand for "separation"). Thus, the constant $\mathbf{C}_{\mathrm{sep}}$ is also absolute, and we may (and will) assume that $\mathbf{C}_{\mathrm{alp}}$ is large relative to $\mathbf{C}_{\mathrm{sep}}$. \\

\textit{\underline{Case 1.}} Given the constant $\epsilon > 0$ from Theorem \ref{main}, the first case is that we can find consecutive sets $F_{k},F_{k + 1},\ldots,F_{k + \mathbf{C}_{\mathrm{sep}}}$ with the property
\begin{equation}\label{form22} \mathcal{H}^{1}(E \, \setminus \, (F_{k} \cup \ldots \cup F_{k + \mathbf{C}_{\mathrm{sep}}})) \leq \epsilon. \end{equation}
In this case we note that $F := F_{k} \cup \ldots \cup F_{k + \mathbf{C}_{\mathrm{sep}}}$ is a union of minigraphs whose directions are within $\lesssim \mathbf{C}_{\mathrm{sep}} \alpha^{\kappa}$ of the fixed vector $w_{k}$. In particular, $F$ can be expressed as a union of finitely many disjoint $\alpha_0$-Lipschitz graphs over the line $\mathrm{span}(w_k)$, with $\alpha_0\sim \mathbf{C}_{\mathrm{sep}} \alpha^{\kappa}$. This will place us in a positions to use Proposition \ref{prop1} (with $E$ replaced by $F$ and $\alpha$ replaced by $\alpha_0$). Of course also
\begin{displaymath} \int_{\mathcal{L}(F)} \#(F \cap \ell) - 1 \, d\eta(\ell) \leq \int_{\mathcal{L}(E)} \#(E \cap \ell) -1\, d\eta(\ell)\leq \delta, \end{displaymath} 
so the analogue of the assumption \eqref{form5} is valid for $F$ in place of $E$. We also note that
\begin{displaymath} \delta = \mathbf{C}_{\mathrm{thm}}^{-1}\epsilon^{70} \leq \mathbf{C}_{\mathrm{thm}}^{-1}\mathbf{C}_{\mathrm{alp}}^{3} \cdot (\epsilon/\mathbf{C}_{\mathrm{alp}})^{3} \cdot \epsilon^{2} = (\mathbf{C}_{\mathrm{thm}}^{-1}\mathbf{C}_{\mathrm{alp}}^{3}) \cdot \alpha^{3\kappa}\epsilon^{2} \sim (\mathbf{C}_{\mathrm{thm}}^{-1}\mathbf{C}_{\mathrm{alp}}^{3}\mathbf{C}_{\mathrm{sep}}^{-3}) \cdot \alpha_0^{3}\epsilon^{2} , \end{displaymath} 
so if $\mathbf{C}_{\mathrm{thm}}$ is sufficiently large relative to $\mathbf{C}_{\mathrm{alp}}$, then the hypothesis in Proposition \ref{prop1} on the relation between $\delta,\alpha_0$, and $\epsilon$ is satisfied (the constant $\mathbf{C}_{\mathrm{sep}}$ is large, so it can be safely ignored here). Consequently, there exists a Lipschitz graph $\Gamma \subset \R^{2}$ of constant $\lesssim \mathbf{C}_{\mathrm{lip}}\mathbf{C}_{\mathrm{sep}} \cdot \alpha^{\kappa} = \mathbf{C}_{\mathrm{lip}}\mathbf{C}_{\mathrm{sep}} \cdot \epsilon/\mathbf{C}_{\mathrm{alp}}$  with the property
\begin{displaymath} \mathcal{H}^{1}(F \, \setminus \, \Gamma) \leq \epsilon, \end{displaymath}
and consequently $\mathcal{H}^{1}(E \, \setminus \, \Gamma) \leq 2\epsilon$. By choosing $\mathbf{C}_{\mathrm{alp}}$ sufficiently large relative to $\mathbf{C}_{\mathrm{sep}}$ and $\mathbf{C}_{\mathrm{lip}}$, we may ensure that $\Gamma$ is an $\epsilon$-Lipschitz graph, as desired. \\

\textit{\underline{Case 2.}} We then move to consider the other option, where $E$ cannot be exhausted, up to measure $\epsilon$, by a constant number of consecutive sets $F_{k},F_{k + 1},\ldots,F_{k + \mathbf{C}_{\mathrm{sep}}}$. 
Since \eqref{form22} fails for every $k$, we may find an index pair $k,l \in \{0,\ldots,M_3\}$ with $|k - l| \geq \mathbf{C}_{\mathrm{sep}}$ such that 
\begin{equation}\label{form19} 
        \mathcal{H}^{1}(F_{k})   \geq \alpha^{2\kappa} \quad \text{and} \quad \mathcal{H}^{1}(F_{l}) \geq  \alpha^{2\kappa}. 
\end{equation}
This follows immediately from the pigeonhole principle, recalling that the cardinality of the pieces $F_{k}$ is $\lesssim \alpha^{-\kappa}$, and also that $\alpha^{\kappa}$ is much smaller than $\epsilon$ by \eqref{eq:alphadef}. 

\begin{remark}
Recall that the "separation" constant $\mathbf{C}_{\mathrm{sep}}$ above has been chosen to be large relative to the constant $\mathbf{C}_{\mathrm{lip}}$ in Proposition \ref{prop1}: morally, if $\Gamma_{1},\Gamma_{2}$ are two $\mathbf{C}_{\mathrm{lip}}\alpha^{\kappa}$-Lipschitz graphs over lines $L_{1},L_{2}$ with $\angle(L_{1},L_{2}) \geq \mathbf{C}_{\mathrm{sep}}\alpha^{\kappa}$, we need to know that $\Gamma_{1}$ and $\Gamma_{2}$ are still "transversal" (their tangents form angles $\geq \tfrac{1}{2}\mathbf{C}_{\mathrm{sep}}\alpha^{\kappa}$ with each other).
\end{remark}
The next key proposition will imply that Case 2 cannot happen:

\begin{proposition}\label{prop2} Suppose that $\mathbf{C}_{\mathrm{sep}} > 0$ is sufficiently large, and suppose that there are $k,l \in \{0,\ldots,M_3\}$ with $|k-l|\geq \mathbf{C}_{\mathrm{sep}}$ such that 
        \begin{equation*}
                \mathcal{H}^{1}(F_{k}) \geq \alpha^{2\kappa} \quad \text{and} \quad \mathcal{H}^{1}(F_{l}) \geq \alpha^{2\kappa}. 
        \end{equation*}
        Then 
        \begin{equation}\label{tform2} \int_{\mathcal{L}(E)} \#(E \cap \ell) - 1 \, d\eta(\ell) \gtrsim \alpha^{7}. \end{equation}
        \end{proposition}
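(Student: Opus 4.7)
The strategy is to exploit transversality between $F_{k}$ and $F_{l}$. Each minigraph in $\mathcal{F}_{k}$ (respectively $\mathcal{F}_{l}$) is $\alpha$-Lipschitz over a line within angle $\alpha^{\kappa}$ of $w_{k}$ (respectively $w_{l}$), and $\alpha \ll \alpha^{\kappa}$, so every tangent vector of $F_{k}$ lies within angle $2\alpha^{\kappa}$ of $w_{k}$ and similarly for $F_{l}$. Since $\angle(w_{k},w_{l}) \gtrsim \mathbf{C}_{\mathrm{sep}}\alpha^{\kappa}$, taking $\mathbf{C}_{\mathrm{sep}}$ sufficiently large guarantees that the tangent at any $x \in F_{k}$ differs from the tangent at any $y \in F_{l}$ by angle $\gtrsim \mathbf{C}_{\mathrm{sep}}\alpha^{\kappa}$. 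In particular $F_{k} \cap F_{l} = \emptyset$, so any line $\ell$ hitting both $F_{k}$ and $F_{l}$ satisfies $\#(E \cap \ell) \geq 2$, yielding
\[
\int_{\mathcal{L}(E)} (\#(E \cap \ell) - 1) \, d\eta(\ell) \geq \eta(\mathcal{L}(F_{k}) \cap \mathcal{L}(F_{l})).
\]
The task reduces to proving $\eta(\mathcal{L}(F_{k}) \cap \mathcal{L}(F_{l})) \gtrsim \alpha^{7}$.

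For this lower bound, my plan is to invoke the formula from Appendix \ref{section:lines-spanned-by-rectifiable-curves}, which expresses the $\eta$-measure of lines spanned by two rectifiable sets as an integral of the form
\[
\iint_{F_{k} \times F_{l}} \frac{|\sin\phi_{1}(x,y)\sin\phi_{2}(x,y)|}{|x-y|} \, d\mathcal{H}^{1}(x) \, d\mathcal{H}^{1}(y)
\]
(or a variant controlling it from below), with $\phi_{i}$ the angle between the tangent of $F_{i}$ at the respective endpoint and the chord from $x$ to $y$. The transversality step guarantees that, for ``good'' pairs $(x,y)$ avoiding small neighbourhoods of the tangent directions of both $F_{k}$ and $F_{l}$, one has $\min(|\sin\phi_{1}|,|\sin\phi_{2}|) \gtrsim \alpha^{\kappa}$. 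Since $|x-y| \leq 2$ for pairs in $B(1)$, and since $\mathcal{H}^{1}(F_{k}), \mathcal{H}^{1}(F_{l}) \geq \alpha^{2\kappa}$, the good contribution is at least $\alpha^{2\kappa}\cdot \mathcal{H}^{1}(F_{k})\cdot \mathcal{H}^{1}(F_{l}) \gtrsim \alpha^{6\kappa}$. With $\kappa = 1/10$, this is $\alpha^{0.6}$, which is comfortably larger than the required $\alpha^{7}$.

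The main obstacle is controlling the ``bad'' pairs $(x,y) \in F_{k}\times F_{l}$ where the chord $[x,y]$ is nearly aligned with a tangent of $F_{k}$ at $x$ or of $F_{l}$ at $y$. On such pairs $|\sin\phi_{i}|$ is tiny, and the associated lines also carry very high multiplicity $\#(F_{k}\cap\ell)\cdot\#(F_{l}\cap\ell)$, so any version of the appendix formula that measures lines with multiplicity must be corrected. For fixed $x \in F_{k}$, the bad set of $y$ lies in a thin strip around the tangent line to $F_{k}$ at $x$; its intersection with $F_{l}$ is controlled by dividing the strip width by the transversality angle $\gtrsim \alpha^{\kappa}$ between the strip and every tangent of $F_{l}$, yielding a lower-order contribution that fits within the generous gap between $\alpha^{0.6}$ and $\alpha^{7}$. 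A secondary technical point is lifting the ``two rectifiable curves'' form of the appendix to the multi-component unions $F_{k}, F_{l}$, which is handled by summing over pairs of minigraphs and absorbing the number-of-minigraphs overhead $\lesssim \alpha^{-\kappa}\cdot \alpha^{-\kappa}$ into the same polynomial margin.
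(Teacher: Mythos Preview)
Your reduction to $\eta(\mathcal{L}(F_k)\cap\mathcal{L}(F_l))\gtrsim\alpha^{7}$ is fine, but the plan to extract this from the appendix formula runs in the wrong direction. Lemma~\ref{lemma:lines-spanned-by-rectifiable-curves} computes the \emph{multiplicity-weighted} quantity $\int \#(F_k\cap\ell)\,\#(F_l\cap\ell)\,d\eta(\ell)$, which \emph{dominates} $\eta(\mathcal{L}(F_k)\cap\mathcal{L}(F_l))$; a lower bound on the double integral therefore does not lower-bound $\eta$. To turn the formula into a lower bound on $\eta$ you need $\#(F_k\cap\ell)=\#(F_l\cap\ell)=1$ for all relevant $\ell$, i.e., each set must lie on a single Lipschitz graph to which every connecting chord is transversal. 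You never invoke Proposition~\ref{prop1}, so $F_k$ and $F_l$ remain unions of an \emph{uncontrolled} number of minigraphs (the overhead $\alpha^{-\kappa}\cdot\alpha^{-\kappa}$ you quote bounds the number of direction classes $\mathcal{E}_j$, not minigraphs), and $\#(F_k\cap\ell)$ is not bounded in terms of $\alpha$. Your identification of ``high multiplicity'' with ``small $\phi_i$'' is also off: a line perpendicular to $w_k$ can hit many minigraphs in $F_k$ while $|\sin\phi_1|=1$.

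The ``bad pairs'' removal is separately incomplete. Take the model case where $F_k,F_l$ are segments of length $\alpha^{2\kappa}$ in directions $w_k,w_l$, with $F_l$ centred at distance $1$ from $F_k$ \emph{along $w_k$}. Then every chord lies within angle $\lesssim\alpha^{3\kappa}$ of $w_k$, so $|\sin\phi_1|\lesssim\alpha^{3\kappa}\ll\alpha^{\kappa}$: no pair is good in your sense, and the strip argument gives a bad set of length $\sim 1/\mathbf{C}_{\mathrm{sep}}\gg\alpha^{2\kappa}=\mathcal{H}^1(F_l)$. The paper eliminates precisely this obstruction in Lemma~\ref{lemma3}: after applying Proposition~\ref{prop1} twice (at scales $\alpha^{\kappa}$ and $\alpha$), it localises to subsets $G_k,G_l$ of measure $\gtrsim\alpha^{3}$ inside balls of radius $\alpha$, each contained in a single $\mathbf{C}_{\mathrm{lip}}\alpha$-Lipschitz graph, and picked by a $2\times 3$ pigeonhole so that each $G_i$ avoids the $\alpha^{1/2}$-tube of the other. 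This forces every chord to make angle $\gtrsim\alpha^{1/2}\gg\alpha$ with both graphs, giving multiplicity exactly $1$ and integrand $\gtrsim\alpha$, whence $\eta(\mathcal{L}(G_k,G_l))\gtrsim\alpha\cdot\alpha^{3}\cdot\alpha^{3}=\alpha^{7}$.
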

As we recorded in \eqref{tform1}, we have $\alpha^{7} = \mathbf{C}_{\mathrm{thm}}\mathbf{C}_{\mathrm{alp}}^{-70} \cdot \delta$. Thus, if $\mathbf{C}_{\mathrm{thm}}$ is chosen sufficiently large relative to $\mathbf{C}_{\mathrm{alp}}$ and the implicit absolute constants in \eqref{tform2}, then \eqref{tform2} would lead to the contradiction
\begin{displaymath} \delta \geq \int_{\mathcal{L}(E)} \#(E \cap \ell) - 1 \, d\eta(\ell) > \delta. \end{displaymath}   
(For the first inequality, recall \eqref{Favard-defect} and our main assumption \eqref{form1}.) Thus, with the choices of constants specified in this section, Case 2 cannot occur. This concludes the proof of Theorem \ref{main}. 

In the next two sections we prove the two key results used above, Propositions \ref{prop1} and \ref{prop2}.

\section{Proof of Proposition \ref{prop1}}\label{s:prop1}

 Let $E \subset \R^{2}$ be as in the proposition. With no loss of generality, we may assume that $L$ is the $x$-axis, so the minigraphs in $\mathcal{E}$ are roughly horizontal.
We introduce further notation. We write 
\begin{displaymath} \mathcal{C}_{\beta} := \{(x,y) \in \R^{2} : |y| \geq \beta |x|\}, \qquad \beta > 0. \end{displaymath}
Thus, the smaller the $\beta$, the wider the cone. We also write 
\begin{displaymath} \mathcal{C}_{\beta}(x) := x + \mathcal{C}_{\beta} \quad \text{and} \quad \mathcal{C}_{\beta}(x,r) := \mathcal{C}_{\beta}(x) \cap B(x,r). \end{displaymath}
With this notation, if a set $\Gamma \subset \R^{2}$ satisfies $\Gamma \cap \mathcal{C}_{\beta}(x) = \{x\}$ for all $x \in \Gamma$, then $\Gamma$ is (a subset of) a $\beta$-Lipschitz graph. Thus, in view of Proposition \ref{prop1}, it would be desirable to show that $E \cap \mathcal{C}_{\clip\alpha}(x) = \{x\}$ for all $x \in E$. In reality, we will prove a similar statement about a subset of $E$ (of nearly full length). It is worth noting that a toy version of these statements is already present in our hypotheses: each minigraph $\gamma \in \mathcal{E}$ is an $\alpha$-Lipschitz graph over the $x$-axis.

\newcommand{\maxconical}[3]{\Theta^*_{#1,#2}(#3)}

Define the maximal conical density
\[
\maxconical{E}{\beta}{x}
=
\sup_{r > 0} \frac{\mathcal{H}^{1}(\mathcal{C}_{\beta}(x,r) \cap E)}{r}.
\]
Lemma \ref{lemma1} says that points of high conical density are negligible, whereas Lemma \ref{lemma2} says that points of low conical density can be mostly contained in a Lipschitz graph.

\begin{lemma}[High conical density points are negligible]\label{lemma1} 
Let $E\subset B(1),\ \alpha\in(0,\alpha_0)$ and $\delta\in (0,1)$ be as in Proposition \ref{prop1}, so that in particular \eqref{form5} holds. Let $\varepsilon > 0$. If the absolute constant $\clip > 0$ is chosen sufficiently large, then
\begin{equation}\label{form9} \mathcal{H}^{1}(\{x \in E : \maxconical{E}{\alpha'}{x}
 \geq \varepsilon \}) \lesssim \frac{\delta}{\varepsilon \alpha^{2}}, \end{equation}
where $\alpha'\coloneqq\clip\alpha/2$. \end{lemma}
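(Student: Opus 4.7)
The plan is to combine a pointwise lower bound on the measure of ``bad directions'' through each point of $A := \{x \in E : \maxconical{E}{\alpha'}{x} \geq \varepsilon\}$ with a global Fubini--coarea upper bound that connects to the hypothesis $\int (\#(E \cap \ell) - 1) \, d\eta \leq \delta$. For each $x \in A$, I would fix $r_x > 0$ such that $\mathcal{H}^{1}(F_x) \geq \varepsilon r_x / 2$ with $F_x := \mathcal{C}_{\alpha'}(x, r_x) \cap E \setminus \{x\}$. Because the minigraph $\gamma_x$ containing $x$ is an $\alpha$-Lipschitz graph over the $x$-axis and $\alpha' = \clip \alpha / 2 > \alpha$ (for, say, $\clip \geq 4$), $F_x$ lies on minigraphs other than $\gamma_x$; for every $y \in F_x$ the line $\ell(x,y)$ is ``bad'' (hits $E$ in $\geq 2$ points), and the angle it makes with the tangent to either of the two minigraphs is at least $\arctan \alpha' - \arctan \alpha \gtrsim \clip \alpha$.

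For the pointwise lower bound, I would study the angle map $P_x \colon F_x \to [0,\pi)$, $P_x(y) := \arg(y - x) \bmod \pi$, whose image $\Theta_x := P_x(F_x)$ coincides with the set of directions $\phi$ of lines through $x$ meeting $F_x$. The tangential Jacobian $|JP_x|(y) = |\sin\angle(\tau_y, v_{P_x(y)})|/|y-x|$ is $\gtrsim \clip \alpha / r_x$ by the previous paragraph together with $|y - x| \leq r_x$, so the coarea/area formula gives
\[
\int_0^\pi N_x(\phi)\, d\phi = \int_{F_x} |JP_x|\, d\mathcal{H}^{1} \gtrsim \frac{\clip\alpha}{r_x}\mathcal{H}^{1}(F_x) \gtrsim \clip\alpha\varepsilon,
\]
where $N_x(\phi) := \#(P_x^{-1}(\phi))$. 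Upgrading this multiplicity bound to the image bound $|\Theta_x| \gtrsim \clip\alpha\varepsilon$ is the \emph{main obstacle}: directions with $N_x(\phi) \geq 2$ correspond to lines $\ell_\phi(x)$ hitting $E$ in $\geq 3$ points, so the overlap $\int (N_x - 1)_+ \, d\phi$ is paid for by extra contributions to $\int (\# - 1)\, d\eta$. One shows that the subset of $A$ on which overlaps absorb more than half of $\int N_x\, d\phi$ already has $\mathcal{H}^{1}$-measure $\lesssim \delta/(\varepsilon\alpha^2)$ by a direct appeal to the hypothesis, and on the complement the pointwise lower bound $|\Theta_x| \gtrsim \clip\alpha\varepsilon$ holds.

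For the global upper bound, set $\theta := \phi - \pi/2$ so that $\phi \in \Theta_x$ iff $\pi_\theta(x) \in B(\theta) := \{t : \#(E \cap \ell(\theta,t)) \geq 2\}$. By Fubini and the coarea formula of Section~\ref{s:prelim} (applied to $A$ viewed as a union of $\alpha_\theta$-Lipschitz graphs over the $\theta$-direction), together with $\#(A \cap \ell) \leq \#(E \cap \ell) \leq 2(\#(E \cap \ell) - 1)$ on $B(\theta)$,
\[
\int_A |\Theta_x| \, d\mathcal{H}^{1}(x) = \int_0^\pi \mathcal{H}^{1}(A \cap \pi_\theta^{-1}(B(\theta))) \, d\theta \leq 2 \int_0^\pi \sqrt{1+\alpha_\theta^2} \int_\R (\#(E \cap \ell(\theta,t)) - 1)_+ \, dt \, d\theta.
\]
The relevant $\theta$'s (forced by $\Theta_x$ to lie in the ``steep sector'' $|\cot\theta| \geq \alpha'$) admit $\alpha_\theta \lesssim 1/\alpha'$ via a direct computation of $|du/ds|$ for minigraphs reparametrized over the $\theta$-direction, whence $\sqrt{1+\alpha_\theta^2} \lesssim 1/\alpha$. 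Thus
\[
\int_A |\Theta_x|\, d\mathcal{H}^{1}(x) \lesssim \frac{1}{\alpha}\int_\mathcal{A} (\#(E\cap\ell) - 1)\, d\eta(\ell) \leq \frac{\delta}{\alpha}.
\]

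Combining the two bounds yields $\clip \alpha \varepsilon \cdot \mathcal{H}^{1}(A) \lesssim \delta / \alpha$, i.e.\ $\mathcal{H}^{1}(A) \lesssim \delta/(\varepsilon\alpha^2)$, as required. The hard part of the plan is the first estimate: the area formula naturally produces the multiplicity-counted integral $\int N_x\, d\phi$ rather than the image measure $|\Theta_x|$, and handling overlaps requires a second (gentler) application of the $\delta$-hypothesis before the Fubini step.
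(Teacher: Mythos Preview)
Your strategy (pointwise lower bound on the measure of bad directions, then Fubini--coarea against the Crofton hypothesis) matches the paper's treatment of one case, but there is a genuine gap in your handling of the set $A_1$ where ``overlaps absorb more than half of $\int N_x\,d\phi$.'' You correctly flag this as the hard part, but the assertion that $\mathcal{H}^1(A_1)\lesssim\delta/(\varepsilon\alpha^2)$ follows ``by a direct appeal to the hypothesis'' does not hold up. Integrating the overlap bound $\int(N_x-1)_+\,d\phi\gtrsim\alpha\varepsilon$ over $A_1$ and applying Fubini and coarea leads to
\[
\alpha\varepsilon\cdot\mathcal{H}^1(A_1)\ \lesssim\ \alpha^{-1}\int \#(A_1\cap\ell)\,\bigl(\#(E\cap\ell)-2\bigr)_+\,d\eta(\ell),
\]
and the integrand on the right is genuinely quadratic in $\#(E\cap\ell)$: it is dominated by $(\#(E\cap\ell)-1)^2$, with near-equality possible. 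The hypothesis \eqref{form5} only controls the linear quantity $\int(\#(E\cap\ell)-1)\,d\eta$, so in the scenario where $E$ piles a lot of mass into a thin tube (making $\#(E\cap\ell)$ large on a small $\eta$-set of lines), the right side above can be vastly larger than $\delta$. No ``gentler'' second application of \eqref{form5} repairs this.

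The paper resolves exactly this obstacle via the Besicovitch alternative (Lemma~\ref{lemma:besicovitch-alternative}): one fixes $H\sim1/(\alpha\varepsilon)$ and shows that for each $x$ of high conical density either (A1) there is an angle set of measure $\geq H^{-1}$ of lines through $x$ hitting $E$ twice --- this is your case $A\setminus A_1$, and the paper handles it essentially as you do --- or (A2) a dyadic stopping-time on angular intervals produces, for a set of $\theta$ of measure $\gtrsim H^{-1}$, a \emph{heavy tube} $T\ni x$ around $\ell_{x,\theta}$ with $\mathcal{H}^1(E\cap T)\gtrsim\varepsilon H\,w(T)\sim w(T)/\alpha$. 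For (A2), one fixes $\theta$, covers the relevant points by a \emph{boundedly overlapping} family of such tubes (via the Besicovitch covering theorem in the $\pi_\theta$-variable), shows from the choice of $H$ and coarea that in each tube at least half of $E\cap T$ lies on lines with $\#(E\cap\ell)\geq2$, and then sums. The bounded-overlap covering is precisely what converts the quadratic loss into a linear one; this tube-and-covering step is the missing geometric ingredient in your plan.
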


Write $\ell_{x,\theta} := \pi_{\theta}^{-1}\{\pi_{\theta}(x)\}$ for $\theta \in [0, \pi)$, so that $\ell_{0,\theta} = \spa(\cos\theta, \sin\theta)^\perp$. Let $J(\beta) \subset [0,\pi)$ be the set of directions in the cone $\mathcal{C}_{\beta}$, i.e., 
\begin{displaymath} J(\beta) = \{\theta \in [0,\pi) : \ell_{0,\theta} \subset \mathcal{C}_{\beta}\} = \{\theta \in [0,\pi) : \spa(\cos\theta, \sin\theta)^\perp \subset \mathcal{C}_{\beta}\}. \end{displaymath}
If $\ell$ is a line, we let $\ell(w)$ denote the tube that is the $w$-neighborhood of $\ell$. For a tube $T = \ell(w)$, we denote $w(T) = w$.

To prove Lemma \ref{lemma1}, we use the Besicovitch alternative:

\begin{lemma}[The Besicovitch alternative]
\label{lemma:besicovitch-alternative}
Let $E \subset \R^2$ and $\beta \leq 1$. Then for all $x \in E$ and $H \geq 1$, at least one of the following two alternatives holds:
 \begin{enumerate}[label=(A\arabic*)]
 \item\label{A1} There exists a set $I_{x} \subset J(\beta)$ of measure $\mathcal{H}^{1}(I_{x}) \geq H^{-1}$ such that 
 \begin{displaymath} \#(E \cap \ell_{x,\theta}) \geq 2, \qquad \theta \in I_{x}. \end{displaymath}
 \item\label{A2} There exists a set $J_{x} \subset J(\beta)$ of measure $\mathcal{H}^{1}(J_{x}) \gtrsim H^{-1}$ and the following property: for every $\theta \in J_{x}$, there is a tube $T = T_{x,\theta} = \ell_{x,\theta}(w(T))$ centred around $\ell_{x,\theta}$ such that
 \begin{displaymath} \mathcal{H}^{1}(E \cap T) \gtrsim \maxconical{E}{\beta}{x} \cdot H \cdot w(T). \end{displaymath} 
 \end{enumerate}
\end{lemma}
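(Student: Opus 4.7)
The plan is to dichotomize based on how the ``angle-from-$x$'' map $\Phi \colon y \mapsto$ the unique $\theta \in J(\beta)$ with $y \in \ell_{x,\theta}$ treats a cone-neighborhood of $x$. First I would set $D := \maxconical{E}{\beta}{x}$ and pick a scale $r > 0$ with $\mathcal{H}^{1}(E \cap \mathcal{C}_{\beta}(x, r)) \geq \tfrac{3}{4} Dr$, which exists by the definition of the supremum. Since $\mathcal{H}^{1}(E \cap \mathcal{C}_{\beta}(x, r/2)) \leq Dr/2$ by the same definition, the annular piece $A := (E \cap \mathcal{C}_{\beta}(x, r)) \setminus B(x, r/2)$ satisfies $\mathcal{H}^{1}(A) \geq Dr/4$, and every $y \in A$ has $|y-x| \sim r$, so $\Phi|_{A} \colon A \to J(\beta)$ is well-defined and continuous.

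In the first case, $\mathcal{H}^{1}(\Phi(A)) \geq H^{-1}$; then (A1) follows with $I_{x} := \Phi(A)$, which is the continuous image of a Borel set, hence analytic and Lebesgue measurable. For each $\theta \in I_{x}$ there is $y \in A \subset E$ with $y \neq x$ and $y \in \ell_{x,\theta}$, giving $\#(E \cap \ell_{x,\theta}) \geq 2$.

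In the second case, $\mathcal{H}^{1}(\Phi(A)) < H^{-1}$, so $A$ is angularly concentrated, and I expect (A2) to follow from a Fubini-Chebyshev argument at tube width $w := r/(4H)$. Setting $f(\theta) := \mathcal{H}^{1}(A \cap \ell_{x,\theta}(w))$, Fubini together with the elementary identity $\mathcal{H}^{1}(\{\theta \in J(\beta) : y \in \ell_{x,\theta}(w)\}) \sim w/|y-x|$ (valid whenever $|y-x| \geq w$, which holds on $A$) yields
\begin{equation*}
\int_{J(\beta)} f(\theta) \, d\theta \sim \int_{A} \frac{w}{|y-x|} \, d\mathcal{H}^{1}(y) \gtrsim \frac{w}{r}\, \mathcal{H}^{1}(A) \gtrsim Dw.
\end{equation*}
Meanwhile, $f$ is supported within the $(2w/r)$-neighborhood of $\Phi(A)$, which has measure $\lesssim H^{-1} + w/r \lesssim H^{-1}$. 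Averaging over this support produces some $\theta_{0}$ with $f(\theta_{0}) \gtrsim DHw$.

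Finally, to promote this single good direction $\theta_{0}$ to a set of positive measure, I would ``smear'' by doubling the tube width. If $|\theta - \theta_{0}| \leq w/r$, then at any $y \in A$ the lines $\ell_{x,\theta_{0}}$ and $\ell_{x,\theta}$ differ by at most $|y-x| \cdot |\theta - \theta_{0}| \leq w$, so $A \cap \ell_{x,\theta_{0}}(w) \subseteq A \cap \ell_{x,\theta}(2w)$. Taking $T_{x,\theta} := \ell_{x,\theta}(2w)$, one gets $\mathcal{H}^{1}(E \cap T_{x,\theta}) \geq f(\theta_{0}) \gtrsim DH \cdot w(T_{x,\theta})$ for every $\theta$ in the interval $J_{x}$ of length $\sim H^{-1}$ around $\theta_{0}$ (intersected with $J(\beta)$, possibly losing a constant factor near the boundary), which is (A2). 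The main technical obstacle is precisely this smearing step, which is also the reason (A2) is formulated with a tube width $w(T)$ allowed to depend on $\theta$ rather than fixed a priori.
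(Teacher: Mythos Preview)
Your annular trick and the Case~1 implication are fine, but Case~2 contains a real gap: the support bound $\mathcal{H}^{1}(\operatorname{supp} f) \lesssim H^{-1} + w/r$ does not follow from $\mathcal{H}^{1}(\Phi(A)) < H^{-1}$. The $(2w/r)$-neighbourhood of a set of small measure can be much larger than the set itself when that set is spread out. Concretely, take $E = \{x\}$ together with $H$ radial segments of length $\ell \sim Dr/H$ sitting in the annulus $B(x,r) \setminus B(x,r/2)$, at directions $\theta_{1},\dots,\theta_{H} \in J(\beta)$ with pairwise angular spacing $\sim 1/H$. Then $\Phi(A) = \{\theta_{1},\dots,\theta_{H}\}$ has measure zero, so you land in Case~2; yet $\operatorname{supp} f$ consists of $H$ disjoint intervals of length $\sim w/r \sim H^{-1}$ each, hence $\mathcal{H}^{1}(\operatorname{supp} f) \sim 1$. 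In this example any tube $\ell_{x,\theta}(w)$ meets at most $O(1)$ of the segments, so $f(\theta) \lesssim \ell \sim Dw$ for every $\theta$, and no $\theta_{0}$ with $f(\theta_{0}) \gtrsim DHw$ exists. Alternative~(A2) \emph{does} hold here, but only with tubes of width $\sim r/H^{2}$, one scale below your fixed $w = r/(4H)$.

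This is exactly why the paper does not fix a single tube width. After covering the bad-angle set by disjoint dyadic intervals $I_{j}$ with $\sum |I_{j}| \lesssim H^{-1}$, it selects the ``heavy'' cones $\mathcal{C}(I_{j})$ carrying mass $\gtrsim DH|I_{j}|r$, and runs a stopping-time enlargement to arrange a matching upper bound on each. Every heavy cone then supplies, for each $\theta \in I_{j}$, a tube of width $w(T) \sim |I_{j}| r$ with the required density, and the heavy intervals together have length $\gtrsim H^{-1}$. The variable width $|I_{j}|r$ absorbs precisely the multiscale structure that a single-scale Fubini--Chebyshev step cannot; your smearing at the end only perturbs one fixed scale and does not recover this.
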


This alternative is part of Besicovitch's original argument \cite{MR1513231} for the Besicovitch projection theorem. For a more recent presentation, see \cite[p. 86-87]{MR867284}. We include the details for completeness.

\begin{proof}[Proof of Lemma \ref{lemma:besicovitch-alternative}]
Let $E,x,\beta,H$ be as in the statement of the lemma. Let $\varepsilon \coloneqq \maxconical{E}{\beta}{x}$, so that there exists an $r>0$ such that $\mathcal{H}^{1}(\mathcal{C}_{\beta}(x,r) \cap E) \geq \varepsilon r$. We set also $J\coloneqq J(\beta).$ 

If the alternative (A1) fails, then
\begin{displaymath} \mathcal{H}^{1}(\{\theta \in J : \#(\mathcal{C}_{\beta}(x,r) \cap E \cap \ell_{x,\theta}) \geq 2\}) \leq H^{-1}. \end{displaymath}
 Since evidently $x \in \mathcal{C}_{\beta}(x,r) \cap E \cap \ell_{x,\theta}$, this implies that most of the lines $\ell_{x,\theta}$ do not intersect the set $\mathcal{C}_{\beta}(x,r) \cap E$ outside $x$. Consequently, $\mathcal{C}_{\beta}(x,r) \cap E$ is contained in a union of narrow cones $\mathcal{C}_{1},\mathcal{C}_{2},\ldots$ which are centred around certain lines $\ell_{x,\theta_{j}}$ with $\theta_{j} \in J$, and whose opening angles $\beta_{1},\beta_{2},\ldots$ satisfy $\sum \beta_{j} \leq 2H^{-1}$. We may arrange that the cones have the form
 \begin{displaymath} \mathcal{C}_{j} \coloneqq \mathcal{C}(I_{j}) \coloneqq \cup \{\ell_{x,\theta} : \theta \in I_{j}\}, \end{displaymath}
 where $I_{j} \subset J$ is a dyadic interval, $|I_{j}| = \beta_{j}$, and $\theta_{j} \in J$ is the midpoint of $I_{j}$. We may also assume that the dyadic intervals $I_{j}$ are disjoint, so the sets $\mathcal{C}_{j} \, \setminus \, \{x\}$ are disjoint. 
 
To use these cones to arrive at alternative (A2), recall that $\mathcal{H}^{1}(\mathcal{C}_{\beta}(x,r) \cap E) \geq \varepsilon r$, where $\varepsilon = \maxconical{E}{\beta}{x}$. Now, we throw away cones which are not \emph{heavy}: we call a cone \emph{heavy} if it satisfies
\begin{equation}\label{form36} \mathcal{H}^{1}(\mathcal{C}_{j} \cap B(x,r) \cap E) \geq \tfrac{1}{4} \cdot \varepsilon H |I_{j}| \cdot r. \end{equation}
The total length of $\mathcal{C}_{\beta}(x,r) \cap E$ contained in the non-heavy cones is bounded from above by 
\begin{displaymath} \frac{\varepsilon Hr}{4} \sum_{j\in\N} |I_{j}| \leq \frac{\varepsilon r}{2} \leq \tfrac{1}{2} \mathcal{H}^{1}(\mathcal{C}_{\beta}(x,r) \cap E), \end{displaymath}
so at least half of the length in $\mathcal{C}_{\beta}(x,r) \cap E$ is contained in the union of the heavy cones. In the sequel, we assume that all the cones $\mathcal{C}_{j}$ are heavy. 

Next, we would like to prove that $\sum \beta_{j} = \sum |I_{j}| \gtrsim H^{-1}$. This would be easy if the heavy cones also satisfied an upper bound roughly matching the lower bound in \eqref{form36}. If we knew this, then we could estimate
\begin{equation}\label{form37} \sum_{j \in \N} |I_{j}| \gtrsim (\varepsilon H r)^{-1} \sum_{j \in \N} \mathcal{H}^{1}(\mathcal{C}_{j} \cap B(x,r) \cap E) \gtrsim H^{-1}. \end{equation}
This desired upper bound in \eqref{form36} need not be true to begin with, but can be easily arranged. Fix a heavy cone $\mathcal{C}(I_{j})$, and  perform the following stopping time argument: the dyadic interval $I_{j}$ is successively replaced by its parent "$\hat{I}_{j}$" until either the upper bound 
\begin{equation}\label{form38} \mathcal{H}^{1}(\mathcal{C}(\hat{I}_{j}) \cap B(x,r) \cap E) \leq \varepsilon H|\hat{I}_{j}| \cdot r \end{equation}
holds, or then $\hat{I}_{j} = J$. This procedure gives rise to a new collection of cones $\mathcal{C}(\hat{I}_{j})$ which are evidently still heavy, and whose union covers the union of the initial heavy cones. Since the intervals $\hat{I}_{j}$ are dyadic, we may arrange that the new heavy cones are disjoint outside $\{x\}$ without violating the previous two properties. 

At this point, either $\hat{I}_{j} = J$ for some index $j$, in which case \eqref{form37} is trivially true (using $|J| \sim 1$), or then the upper bound \eqref{form38} holds for all the heavy cones. In this case the lower bound \eqref{form37} holds by the very calculation shown in \eqref{form37}.

We are now fully equipped to establish alternative (A2). Consider a line $\ell_{x,\theta}$ contained in the union of the heavy cones. According to \eqref{form37}, the set of angles $\theta \in J$ of such lines has length $\gtrsim H^{-1}$. This set of angles is the set $J_{x} \subset J$ whose existence is claimed in (A2). It remains to associate the tube $T_{x,\theta}$ to each line $\ell_{x,\theta}$ with $\theta \in J_{x}$. Let $\mathcal{C}(I_{j}) = \mathcal{C}_{j} \supset \ell_{x,\theta}$ be the (unique) heavy cone containing $\ell_{x,\theta}$. The opening angle of $\mathcal{C}_{j}$ is $\beta_{j} = |I_{j}| \in (0,|J|]$, and it follows by elementary geometry that 
\begin{displaymath} \mathcal{C}_{j} \cap B(x,r) \subset \ell_{x,\theta}(2\beta_{j}r) =: T_{x,\theta}. \end{displaymath}
Finally,
\begin{displaymath} \mathcal{H}^{1}(E \cap T_{x,\theta}) \geq \mathcal{H}^{1}(\mathcal{C}_{j} \cap B(x,r) \cap E) \gtrsim \varepsilon H \beta_{j} \cdot r \sim \varepsilon H \cdot w(T), \end{displaymath}
as claimed in alternative (A2).
\end{proof}

\begin{proof}[Proof of Lemma \ref{lemma1}] The main geometric observation is the following: every minigraph in $\mathcal{E}$ is an $\alpha^{-1}$-Lipschitz graph over every line $L_\theta := \spa(\cos \theta,\sin \theta) = \ell_{0,\theta}^\perp$ with $\theta \in J(\alpha')$ (recall that $\alpha'=\clip\alpha/2$). This is simply because the minigraphs in $\mathcal{E}$ are $\alpha$-Lipschitz graphs over the $x$-axis, but for all $\theta \in J(\alpha')$, the lines $L_\theta$ form an angle $\gtrsim \alpha$ with the $y$-axis. Thus, $E$ is a union of finitely many $\alpha^{-1}$-Lipschitz graphs over $L_{\theta}$, for every $\theta \in J(\alpha')$. This places us in a position to use the area formula \eqref{area}: for every $\theta \in J(\alpha')$ and every $\mathcal{H}^{1}$ measurable subset $E' \subset E$ we have
\begin{equation}\label{form7} \int_{\pi_{\theta}(E')} \#(E' \cap \pi_{\theta}^{-1}\{t\}) \, dt \gtrsim \alpha\mathcal{H}^{1}(E'). \end{equation} 
 
Let
\[
R =\{x \in E : \maxconical{E}{\alpha'}{x} \geq \varepsilon \}.
\]
Fix $H \geq 1$. (We will eventually choose $H \sim 1/(\alpha \varepsilon)$; see \eqref{form15} below.) By Lemma \ref{lemma:besicovitch-alternative} (with $\beta = \alpha'$), we can write $R = R_1 \cup R_2$, where alternative (A1) holds on $R_1$ and (A2) holds on $R_2$. To prove \eqref{form9}, it suffices to show
\begin{equation}\label{Ri-bound} 
\mathcal{H}^{1}(R_i) \lesssim \frac{\delta }{\varepsilon\alpha^2} \qquad \text{ for } i = 1,2. 
\end{equation}

We first consider $R_1$. Recall the sets $I_{x} \subset J(\alpha')$ defined in (A1). Since $E$ is a union of finitely many compact Lipschitz graphs, there are no measurability issues, and we may freely use Fubini's theorem:
 \begin{equation}\label{form8} H^{-1}\mathcal{H}^{1}(R_1) \leq \int_{R_1} \mathcal{H}^{1}(I_{x}) \, d\mathcal{H}^{1}(x) = \int_{J(\alpha')} \mathcal{H}^{1}(\{x \in R_1 : \theta \in I_{x}\}) \, d\theta. \end{equation}
 For $\theta \in J(\alpha')$ fixed, abbreviate $R_{\theta}' := \{x \in R_1 : \theta \in I_{x}\}$. Write also
\begin{displaymath} E_{\theta}' := \bigcup_{t \in \pi_{\theta}(R_{\theta}')} (E \cap \pi_{\theta}^{-1}\{t\}), \end{displaymath} 
so certainly $R_{\theta}' \subset E_{\theta}'$. Note that if $t \in \pi_{\theta}(E_{\theta}')$, then $t = \pi_{\theta}(x)$ for some $x \in R_{\theta}'$. Thus $\theta \in I_{x}$ by definition, so 
\begin{displaymath} \#(E_{\theta}' \cap \pi_{\theta}^{-1}\{t\}) = \#(E \cap \ell_{x,\theta}) \geq 2. \end{displaymath}
Therefore
 \begin{equation}\label{form39} \#(E_{\theta}' \cap \pi_{\theta}^{-1}\{t\}) - 1 \sim \#(E_{\theta}' \cap \pi_{\theta}^{-1}\{t\}), \qquad t \in \pi_{\theta}(E_{\theta}'). \end{equation}
We may now deduce from \eqref{form7} applied to $E' := E_{\theta}'$, and \eqref{form39}, that
\begin{displaymath} \int_{\pi_{\theta}(E_{\theta}')} \#(E_{\theta}' \cap \pi_{\theta}^{-1}\{t\}) - 1 \, dt \sim \int_{\pi_{\theta}(E_{\theta}')} \#(E_{\theta}' \cap \pi_{\theta}^{-1}\{t\}) \, dt \gtrsim \alpha \mathcal{H}^{1}(E_{\theta}') \geq \alpha\mathcal{H}^{1}(R_{\theta}'), \end{displaymath}
and finally
\begin{displaymath} \int_{\mathcal{L}(E)} \#(E \cap \ell) - 1 \, d\eta(\ell) \geq \int_{J(\alpha')} \int \#(E_{\theta}' \cap \pi_{\theta}^{-1}\{t\}) - 1 \, dt \, d\theta \stackrel{\eqref{form8}}{\geq} \alpha H^{-1}\mathcal{H}^{1}(R_1). \end{displaymath}
By \eqref{form5} the left hand side is bounded from above by $\delta$, so
\begin{equation}\label{form40} \mathcal{H}^{1}(R_1) \lesssim \frac{\delta H}{\alpha}. \end{equation}
Recalling that we promised to choose $H \sim 1/(\alpha \varepsilon)$ in the end, the bound above implies \eqref{Ri-bound} for $R_1$.

Next, we tackle $R_2$. This time we define $R_{\theta}' := \{x \in R_2 : \theta \in J_{x}\} \subset E$, and we deduce exactly as in \eqref{form8} that
\begin{equation}\label{form10} H^{-1}\mathcal{H}^{1}(R_2) \lesssim \int_{J(\alpha')} \mathcal{H}^{1}(R_{\theta}') \, d\theta. \end{equation}
Fix $\theta \in J(\alpha')$ with $R_{\theta}' \neq \emptyset$. For each $x \in R_{\theta}'$, by definition, there exists a tube $T = T_{x,\theta}$ centred around $\ell_{x,\theta}$ with the property 
\begin{equation}\label{form12} \mathcal{H}^{1}(E \cap T) \gtrsim \varepsilon H \cdot w(T). \end{equation}
The tubes $\{T_{x,\theta} : x \in R_{\theta}'\}$ may overlap, but they are all parallel. It follows from an application of the Besicovitch covering theorem (to the projections $I_{x,\theta} := \pi_{\theta}(T_{x,\theta}) \subset \R$) that there exists a countable sub-collection $\mathcal{T}_{\theta} \subset \{T_{x,\theta} : x \in R_{\theta}'\}$, with the properties
\begin{equation}\label{form11} R_{\theta}' \subset \bigcup_{x \in R_{\theta}'} T_{x,\theta} \subset \bigcup_{T \in \mathcal{T}_{\theta}} T \quad \text{and} \quad \sum_{T \in \mathcal{T}_{\theta}} \mathbf{1}_{T} \lesssim 1. \end{equation}
Fix $T \in \mathcal{T}_{\theta}$, and let $\textbf{Bad}(E \cap T) \subset  E \cap T$ consist of those points $x \in E \cap T$ with $\#(\ell_{x,\theta} \cap E) = 1$. We apply the coarea formula \eqref{area} to the set $A := \textbf{Bad}(E \cap T) \subset E$. Recalling that for every $\theta \in J(\alpha')$ the set $E$ is a union of finitely many $\alpha^{-1}$-Lipschitz graphs over $L_\theta$ (see remark above \eqref{form7}) we get that
\begin{equation}\label{form13} \mathcal{H}^{1}(\textbf{Bad}(E \cap T)) \lesssim \frac{1}{\alpha} \int_{\pi_{\theta}(T)} 1 \, dt = \frac{w(T)}{\alpha}. \end{equation}
Now, for a suitable choice $H \sim 1/(\alpha\varepsilon)$, a combination of \eqref{form12} and \eqref{form13} shows that
\begin{equation}\label{form15} \mathcal{H}^{1}((E \cap T) \, \setminus \, \mathbf{Bad}(E \cap T)) \geq \tfrac{1}{2}\mathcal{H}^{1}(E \cap T). \end{equation} 
At this point, we simplify notation by setting 
\begin{displaymath} E_{\theta} := \bigcup_{T \in \mathcal{T}_{\theta}} (E \cap T) \, \setminus \, \mathbf{Bad}(E \cap T) \subset E. \end{displaymath} 
By the definition of the sets $\mathbf{Bad}(E \cap T)$, if $x \in E_{\theta}$, then $\#(E \cap \ell_{x,\theta}) \geq 2$, and therefore 
 \begin{equation}\label{form14} \#(E \cap \pi_{\theta}^{-1}\{t\}) - 1 \sim \#(E \cap \pi_{\theta}^{-1}\{t\}) \ge \#(E_{\theta} \cap \pi_{\theta}^{-1}\{t\}), \qquad t \in \pi_{\theta}(E_{\theta}). \end{equation}
 It follows that
 \begin{align*} \int_{\mathcal{L}(E)} \#(E \cap \ell) - 1 \, d\eta(\ell) & \geq \int_{J(\alpha')} \int \#(E \cap \pi_{\theta}^{-1}\{t\}) - 1 \, dt \, d\theta\\
 &\stackrel{\eqref{form14}}{\gtrsim} \int_{J(\alpha')} \int_{\pi_{\theta}(E_{\theta})} \#(E_{\theta} \cap \pi_{\theta}^{-1}\{t\}) \, dt \, d\theta\\
 &\stackrel{\eqref{form11}}{\gtrsim} \int_{J(\alpha')} \sum_{T \in \mathcal{T}_{\theta}} \int_{\pi_{\theta}(E_{\theta} \cap T)} \#(E_{\theta} \cap \pi_{\theta}^{-1}\{t\}) \, dt \, d\theta\\
 &\stackrel{\eqref{form7}}{\gtrsim} \alpha \int_{J(\alpha')} \sum_{T \in \mathcal{T}_{\theta}} \mathcal{H}^{1}(E_{\theta} \cap T) \, d\theta\\
 &\stackrel{\eqref{form15}}{\geq} \frac{\alpha}{2} \int_{J(\alpha')} \sum_{T \in \mathcal{T}_{\theta}} \mathcal{H}^{1}(E \cap T) \, d\theta\\
 &\stackrel{\eqref{form11}}{\geq} \alpha \int_{J(\alpha')} \mathcal{H}^{1}(R_{\theta}') \, d\theta \stackrel{\eqref{form10}}{\geq} \frac{\alpha}{H} \cdot \mathcal{H}^{1}(R_2). \end{align*} 
 Recalling once again from \eqref{form5} that the left hand side above is $\leq \delta$, we deduce that
 \begin{displaymath} \mathcal{H}^{1}(R_2) \lesssim \frac{\delta H}{\alpha} \sim \frac{\delta}{\varepsilon \alpha^{2}}, \end{displaymath}
 which is \eqref{Ri-bound} for $R_2$. The proof of Lemma \ref{lemma1} is complete.  \end{proof}

Next, repeating the classical "two cones" argument of Besicovitch, we show that we can pack most of points of low conical density into a single Lipschitz graph. 

\begin{lemma}[Most low conical density points fit into a Lipschitz graph]
\label{lemma2} 

Let $E \subset B(1) \subset \R^{2}$ and let $\varepsilon\in(0,1),\, \beta \in (0,\frac{1}{2})$. Then, there exists a $2\beta$-Lipschitz graph $\Gamma \subset \R^{2}$ over the $x$-axis such that 
\begin{displaymath} \mathcal{H}^{1}(\{x \in E : \maxconical{E}{\beta}{x} \leq \varepsilon\} \, \setminus \, \Gamma) \lesssim \varepsilon/\beta. \end{displaymath}
\end{lemma}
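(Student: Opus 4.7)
My plan is to construct $\Gamma$ via the classical Besicovitch ``two cones'' procedure, then estimate the bad set using the low conical density hypothesis.

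\textbf{Construction of $\Gamma$.} Let $G := \{x \in E : \maxconical{E}{\beta}{x} \leq \varepsilon\}$, and set
\[
\Gamma_0 := \{x \in G : G \cap (\mathcal{C}_{2\beta}(x) \setminus \{x\}) = \emptyset\}.
\]
By the cone criterion for Lipschitz graphs recalled just above Lemma \ref{lemma1}, $\Gamma_0$ is automatically contained in a $2\beta$-Lipschitz graph over the $x$-axis; a McShane-type Lipschitz extension promotes it to a full $2\beta$-Lipschitz graph $\Gamma \supset \Gamma_0$. This reduces the task to showing $\mathcal{H}^1(B) \lesssim \varepsilon/\beta$ for the bad set $B := G \setminus \Gamma_0$.

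\textbf{Estimating $B$.} For each $x \in B$, the definition of $\Gamma_0$ produces a witness $y_x \in G \setminus \{x\}$ with $y_x \in \mathcal{C}_{2\beta}(x)$. Let $r_x := |y_x - x|$ and split $B$ dyadically by $r_x$. For each scale, apply the Besicovitch (or $5$-times) covering lemma to $\{B(x, r_x) : x \in B_k\}$ to obtain a bounded-overlap subfamily $\{B(x_i, r_i)\}$ whose dilates cover $B_k$. On each ball $B(x_i, r_i)$ the hypothesis gives $\mathcal{H}^1(\mathcal{C}_\beta(x_i, r_i) \cap E) \leq \varepsilon r_i$. The point is now that the gap between the narrower cone $\mathcal{C}_{2\beta}$ (where $y_{x_i}$ sits) and the wider cone $\mathcal{C}_\beta$ provides uniform ``fatness'': a tube of width $\sim \beta r_i$ around the segment $[x_i, y_{x_i}]$ is contained in $\mathcal{C}_\beta(x_i, r_i)$. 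Combining this with the density bound, and summing over the scales with the $E \subset B(1)$ restriction, I expect to obtain $\mathcal{H}^1(B) \lesssim \varepsilon/\beta$.

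\textbf{Main obstacle.} The heart of the difficulty is extracting an $\mathcal{H}^1$-bound on $B$ from the density bound on $E$ in the cone. The density hypothesis is a \emph{mass} bound in $\mathcal{C}_\beta$, not a combinatorial ``no-point'' bound, whereas the definition of $B$ only produces \emph{one} witness per bad point. The gap between $\mathcal{C}_\beta$ and $\mathcal{C}_{2\beta}$ must be exploited to convert individual witnesses into cone-bulk mass, and the covering geometry must be organised so that this mass is not double-counted; the $1/\beta$ factor in the conclusion should emerge exactly from the ``thickness'' between the two cones (equivalently, from a coarea-style factor when integrating along nearly vertical directions, as in the formula \eqref{area}). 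I expect the argument to be somewhat delicate at the point where the witness pairs $(x, y_x)$ are aggregated into a length estimate on $B$.
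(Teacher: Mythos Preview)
Your setup (defining $G$, $\Gamma_0$, and the bad set $B$) matches the paper exactly, and you are right that the whole task is to show $\mathcal{H}^1(B)\lesssim \varepsilon/\beta$. But the estimation scheme you outline has two genuine gaps, and the ``main obstacle'' you flag is indeed the place where the argument breaks.

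\textbf{First gap: the local estimate.} Your covering pieces are balls $B(x_i,r_i)$ (or their $5$-dilates), and you invoke $\mathcal{H}^1(\mathcal{C}_\beta(x_i,r_i)\cap E)\le \varepsilon r_i$. But this does not bound $\mathcal{H}^1(B\cap B(x_i,5r_i))$: bad points of $B$ near $x_i$ need not lie in the cone $\mathcal{C}_\beta(x_i)$. The observation about a $\beta r_i$-tube around $[x_i,y_{x_i}]$ sitting inside $\mathcal{C}_\beta(x_i,r_i)$ is correct but irrelevant here, since that tube contains only $x_i$ and $y_{x_i}$, not the other bad points you want to account for. The paper fixes this in two moves. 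First, it takes $r(x):=\sup\{|x-y|:y\in G\cap\mathcal{C}_{2\beta}(x)\}$ rather than an arbitrary witness; with the supremum, the \emph{vertical} tube $T_x$ of width $\tfrac{1}{10}\beta r(x)$ through $x$ satisfies $B\cap T_x\subset B(x,r(x))$, because any point of $B\subset G$ in $T_x\setminus B(x,r(x))$ lies in $\mathcal{C}_{2\beta}(x)$ and would violate the supremum. Second, it chooses a near-extremal witness $y(x)$ and uses the genuine ``two cones'' inclusion $T_x\subset \mathcal{C}_\beta(x)\cup \mathcal{C}_\beta(y(x))$, applying the low density bound at \emph{both} $x$ and $y(x)$ (this is the step your proposal never invokes). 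Together these give $\mathcal{H}^1(B\cap T_x)\lesssim \varepsilon r(x)\sim (\varepsilon/\beta)\,w(T_x)$.

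\textbf{Second gap: the summing.} Even granting a local bound of the form $\mathcal{H}^1(B\cap B(x_i,5r_i))\lesssim \varepsilon r_i$, your dyadic ball covering does not sum: at scale $2^{-k}$ one may need $\sim 2^{2k}$ balls in $B(1)$, so $\sum_i r_i\sim 2^{k}$, and the sum over $k$ diverges. The $E\subset B(1)$ restriction does not rescue this. The paper avoids the issue by covering with \emph{vertical} tubes $T_x$ of a single direction: applying the Besicovitch covering theorem to their $1$-dimensional projections on the $x$-axis gives a bounded-overlap subfamily with $\sum w(T)\lesssim 1$, whence $\mathcal{H}^1(B)\lesssim (\varepsilon/\beta)\sum w(T)\lesssim \varepsilon/\beta$. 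The fixed direction of the tubes is exactly what makes the sum close, and this is unavailable for your variable-direction tubes along $[x_i,y_{x_i}]$ or for balls.
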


\begin{proof} Let $G = \{x \in E : \maxconical{E}{\beta}{x} \leq \varepsilon\}$. Our task is to find a subset $\Gamma \subset G$ with $\mathcal H^1(G \, \setminus \, \Gamma) \lesssim \varepsilon/\beta$ and the property $\mathcal{C}_{2\beta}(x) \cap \Gamma = \{x\}$ for all $x \in \Gamma$. Then $\Gamma$ extends to a $2\beta$-Lipschitz graph, as desired. 

Let $B$ be the set of points $x \in G$ with the ``bad'' property that there exists a point $y \in G \cap \mathcal{C}_{2\beta}(x)$ with $y \neq x$. The goal is to show that $\mathcal{H}^{1}(B) \lesssim \varepsilon/\beta$. For each $x \in B$, let $r(x) = \sup \{|x-y| : y \in G \cap \mathcal{C}_{2\beta}(x)\}$, so
\begin{equation}\label{form16} B \cap \mathcal{C}_{2\beta}(x) \subset B(x,r(x)), \qquad x \in B. \end{equation} 
See Figure \ref{fig4} for an illustration. 

Let $T_{x}$ be the tube around the vertical line passing through $x$ with $w(T_{x}) := \tfrac{1}{10} \beta r(x)$. Then
\begin{equation}\label{form18} T_{x} \, \setminus \, B(x,\tfrac{1}{2} \beta r(x)) \subset \mathcal{C}_{1}(x) \subset \mathcal{C}_{2\beta}(x) \subset \mathcal{C}_{\beta}(x). \end{equation}
(Recall that $2\beta \leq 1$.) In particular, \eqref{form18} implies $T_{x} \, \setminus \, B(x,r(x)) \subset \mathcal{C}_{2\beta}(x)$. Using this corollary, we observe that
\begin{align} B \cap T_{x} & \subset B(x,r(x)) \cup [(B \cap T_{x}) \, \setminus \, B(x,r(x))] \notag\\
& = B(x,r(x)) \cup [B \cap (T_{x} \, \setminus \, B(x,r(x)))] \notag\\
&\label{form17} \subset B(x,r(x)) \cup [B \cap \mathcal{C}_{2\beta}(x)] \stackrel{\eqref{form16}}{\subset} B(x,r(x)). \end{align} 

Choose a point $y(x) \in G \cap \mathcal{C}_{2\beta}(x)$ such that $|x-y(x)| \geq \tfrac{9}{10} r(x)$. A slightly more delicate geometric fact is that 
\begin{displaymath} T_{x} \subset \mathcal{C}_{\beta}(x) \cup \mathcal{C}_{\beta}(y(x)). \end{displaymath}
This is an exercise in elementary geometry, see Figure \ref{fig4} (or the proof in \cite[Lemma 15.14]{zbMATH01249699} for a more formal argument): the disc $B(x,\frac{1}{2} \beta r(x))$, and in particular the intersection $T_{x} \cap B(x,\frac{1}{2} \beta r(x))$, is contained in the cone $\mathcal{C}_{\beta}(y(x))$, whereas the rest of $T_{x}$ is contained in $\mathcal{C}_{\beta}(x)$, as already noted in \eqref{form18}. 
\begin{figure}[h!]
\begin{center}
\begin{overpic}{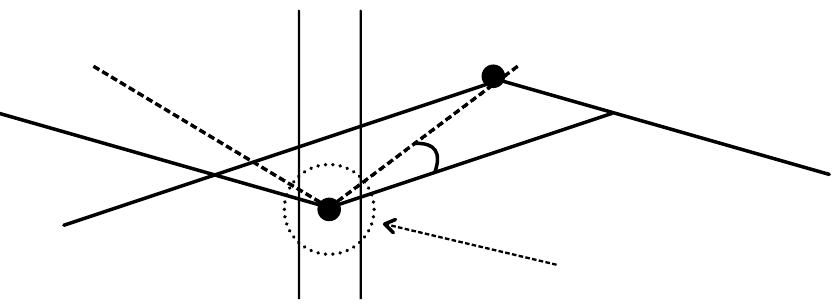}
\put(37,32){$T_{x}$}
\put(58,31){$y(x)$}
\put(68,3){$B(x,\frac{1}{2}\beta r(x))$}
\put(54,19){$\beta$}
\put(38,7){$x$}
\end{overpic}
\caption{Containing the tube $T_{x}$ in the union of the cones $\mathcal{C}_{\beta}(x)$ and $\mathcal{C}_{\beta}(y(x))$. The dotted cone illustrates $\mathcal{C}_{2\beta}(x) \ni y(x)$.}\label{fig4}
\end{center}
\end{figure}
Consequently, using \eqref{form17}, the trivial inclusion $B(x,r(x)) \subset B(y(x),2r(x))$, and $x, y(x) \in G$, we have
\begin{align*} \mathcal{H}^{1}(B \cap T_{x}) & \leq \mathcal{H}^{1}(\mathcal{C}_{\beta}(y(x),2r(x)) \cap E) + \mathcal{H}^{1}(\mathcal{C}_{\beta}(x,r(x)) \cap E)\\
& \leq 2\varepsilon r(x) + \varepsilon r(x) \leq 30(\varepsilon/\beta) \cdot w(T_{x}). \end{align*}
We have now shown that every point $x \in B$ is contained on the central line of a vertical tube $T_{x}$ satisfying the estimate above. By the Besicovitch covering theorem, as in the proof of Lemma \ref{lemma1}, we may then find a countable, boundedly overlapping sub-family $\mathcal{T}$ of these tubes which still cover $B$. All the tubes intersect $B(1) \supset B$, so $\sum_{T \in \mathcal{T}} w(T) \lesssim 1$. It follows that 
\begin{displaymath} \mathcal{H}^{1}(B) \leq \sum_{T \in \mathcal{T}} \mathcal{H}^{1}(B \cap T) \leq \frac{30\varepsilon}{\beta} \sum_{T \in \mathcal{T}} w(T) \lesssim \frac{\varepsilon}{\beta}. \end{displaymath}
This completes the proof of Lemma \ref{lemma2}. \end{proof}

We are then ready to prove Proposition \ref{prop1}:

\begin{proof}[Proof of Proposition \ref{prop1}] Fix $\epsilon > 0$ as in the statement of the proposition, and set $\alpha'=\clip\alpha/2$. Define $\epsilon_{1} := \alpha \epsilon/C$ for a suitable absolute constant $C > 0$. By Lemma \ref{lemma1} applied to $\varepsilon=\epsilon_{1}$, we know that the set $R \subset E$ of bad points $x \in E$ with
\begin{displaymath} \maxconical{E_\alpha}{\alpha'}{x} \geq \epsilon_{1} \end{displaymath} 
satisfies 
\begin{displaymath} \mathcal{H}^{1}(R) \lesssim \delta \cdot \epsilon_{1}^{-1}\alpha^{-2} = C\delta \cdot \epsilon^{-1}\alpha^{-3}. \end{displaymath}
Since $\delta \le C_0\epsilon^{2}\alpha^{3}$, taking $C_0=C^{-2}$ gives $\mathcal{H}^{1}(R) \leq \epsilon/2$ (assuming that $C > 0$ was large enough). 

The set $G := E \, \setminus \, R$ satisfies the hypotheses of Lemma \ref{lemma2} (with $\beta=\alpha'=\clip\alpha/2$ and $\varepsilon=\epsilon_1$), so there exists a $\clip\alpha$-Lipschitz graph $\Gamma \subset \R$ over the $x$-axis such that $\mathcal{H}^{1}(G \, \setminus \, \Gamma) \lesssim \epsilon_{1}/\alpha = \epsilon/C$. If the constant $C > 0$ was chosen large enough, we see that
\begin{displaymath} \mathcal{H}^{1}(E \, \setminus \, \Gamma) \leq \mathcal{H}^{1}(R) + \mathcal{H}^{1}(G \, \setminus \, \Gamma) \leq \tfrac{\epsilon}{2} + \tfrac{\epsilon}{2} = \epsilon. \end{displaymath}
This concludes the proof of Proposition \ref{prop1}. \end{proof}

\section{Proof of Proposition \ref{prop2}}\label{s:prop2}
In this section we prove Proposition \ref{prop2}. Recall that we are assuming to be in "Case 2"; that is, $E$  cannot be exhausted, up to measure $\epsilon$, by a a constant number of consecutive sets $F_{k},F_{k + 1},\ldots,F_{k + \mathbf{C}_{\mathrm{sep}}}$ (recall this notation from Subsection \ref{ss:only-one-graph}).
More precisely, this meant that
\begin{equation}\label{form222} \mathcal{H}^{1}(E \, \setminus \, (F_{k} \cup \ldots \cup F_{k + \mathbf{C}_{\mathrm{sep}}})) \leq \epsilon. 
\end{equation}
 \textit{fails} for every $k$; thus we found an index pair $k,l \in \{0,\ldots,M_3\}$ with $|k - l| \geq \mathbf{C}_{\mathrm{sep}}$ such that 
\begin{equation}\label{form19b} 
        \mathcal{H}^{1}(F_{k})  \geq \alpha^{2\kappa} \quad \text{and} \quad \mathcal{H}^{1}(F_{l}) \geq \alpha^{2\kappa}. 
\end{equation}
Recall that all the minigraphs in $\mathcal{F}_{k}$ make an angle $\leq \alpha^{\kappa}$ with 
\begin{displaymath} L_{k} \coloneqq \spa(w_{k})= \spa(\cos(k \pi/M_{3}),\sin(k\pi/M_{3})) \end{displaymath}
and similarly all the minigraphs in $\mathcal{F}_{l}$ make an angle $\leq \alpha^{\kappa}$ with $L_{l} = \spa(w_{l})$.

\begin{figure}[h!]
\begin{center}
\begin{overpic}[scale = 0.9]{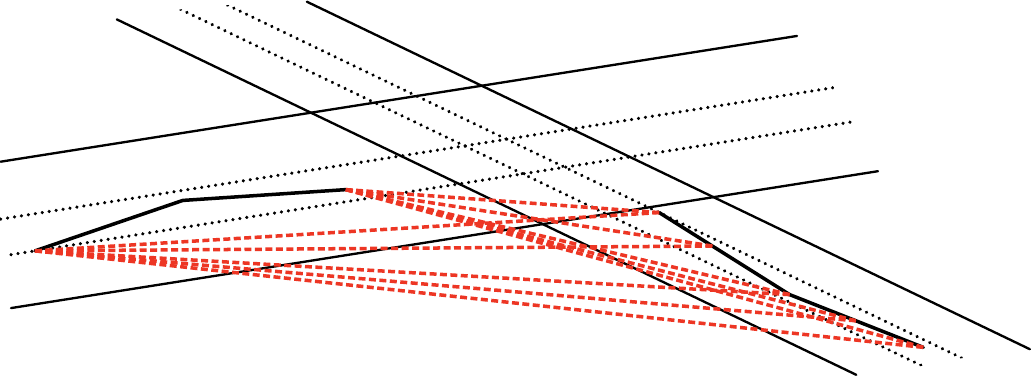}
\put(37,34){$T_{k}$}
\put(66,34){$T_{l}$}
\put(-3,12){$\gamma_{l}$}
\put(90,-0.5){$\gamma_{k}$}
\put(83,26){$T_{l}'$}
\put(17,37){$T_{k}'$}
\end{overpic}
\caption{A configuration where positively many lines hit $E$ twice.}\label{fig1}
\end{center}
\end{figure}
The existence of $F_{k}$ and $F_{l}$ will imply a configuration such as the one depicted in Figure \ref{fig1}. A more precise definition is given in the lemma below.
\begin{lemma}\label{lemma3} If \eqref{form19b} holds, then there exists an absolute constant $C \sim \mathbf{C}_{\mathrm{lip}}$ (the constant from Proposition \ref{prop1}) such that the following objects exist:
\begin{enumerate}
\item Affine lines $\ell_{k}$ and $\ell_{l}$ with $\angle(\ell_{k},L_{k}) \leq \alpha^{\kappa}$ and $\angle(\ell_{l},L_{l}) \leq \alpha^{\kappa}$.
\item Tubes $T_{k}' := \ell_{k}(C\alpha)$ and $T_{k} := \ell_{k}(\alpha^{1/2})$.
\item Tubes $T_{l}' := \ell_{l}(C\alpha)$ and $T_{l} := \ell_{l}(\alpha^{1/2})$.
\item $\mathbf{C}_{\mathrm{lip}}\alpha$-Lipschitz graphs $\gamma_{k},\gamma_{l}$ over the lines $\ell_{k},\ell_{l}$, respectively such that
\begin{displaymath} \gamma_{k} \cap B(1) \subset T_{k}' \quad \text{and} \quad \gamma_{l} \cap B(1) \subset T_{l}'. \end{displaymath}
\item Compact subsets 
\begin{equation}\label{form31} G_{k} \subset (E \cap \gamma_{k}) \, \setminus \, T_{l} \subset B(1) \quad \text{and} \quad G_{l} \subset (E \cap \gamma_{l}) \, \setminus \, T_{k} \subset B(1) \end{equation}
of measure $\mathcal{H}^{1}(G_{k}) \geq \alpha^{3}/C$ and $\mathcal{H}^{1}(G_{l}) \geq \alpha^{3}/C$.  
\end{enumerate}
\end{lemma}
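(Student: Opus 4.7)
The plan is to apply Proposition \ref{prop1} twice and then use the defect bound $\int_{\mathcal{L}(E)}(\#(E\cap\ell)-1)\,d\eta(\ell)\leq\delta$ (implicit in \eqref{Favard-defect} and \eqref{form1}) to control the mass that the transverse wide tubes $T_k,T_l$ can steal from the graphs I construct.

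First I would pigeonhole over the $\sim\alpha^{\kappa-1}$ minigraph directions $v_j$ appearing in $\mathcal{F}_k$. Since $\mathcal{H}^1(F_k)\geq\alpha^{2\kappa}$ by \eqref{form19b}, one such direction $v_{j(k)}$ (at angle $\leq\alpha^\kappa$ from $w_k$) supports a subfamily $\mathcal{F}_k^*\subset\mathcal{F}_k$ with $\mathcal{H}^1(\bigcup\mathcal{F}_k^*)\gtrsim\alpha^{1+\kappa}$. Set $F_k^*:=\bigcup\mathcal{F}_k^*$; this is a finite union of disjoint $\alpha$-Lipschitz graphs over the single line $\spa(v_{j(k)})$. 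Proposition \ref{prop1} applies directly to $F_k^*$ with $\alpha_{\mathrm{prop}}=\alpha$ and $\epsilon_{\mathrm{prop}}=\alpha^{1+\kappa}/4$; the hypothesis $\delta\leq C_0\alpha^3\epsilon_{\mathrm{prop}}^2$ becomes $\alpha^7\lesssim\alpha^{5+2\kappa}$, which holds since $\kappa=\tfrac{1}{10}$. This produces a $\mathbf{C}_{\mathrm{lip}}\alpha$-Lipschitz graph $\gamma_k$ over $\spa(v_{j(k)})$ with $\mathcal{H}^1(F_k^*\cap\gamma_k)\geq\tfrac{3}{4}\alpha^{1+\kappa}$. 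Because $\gamma_k\cap B(1)$ has extent at most $2$ along $\spa(v_{j(k)})$ and variation at most $2\mathbf{C}_{\mathrm{lip}}\alpha$ in the perpendicular direction, it lies inside $\ell_k(C\alpha)$ for a suitable translate $\ell_k$ of $\spa(v_{j(k)})$; I designate this $\ell_k$ as the affine line demanded by the lemma, and set $T_k':=\ell_k(C\alpha)$, $T_k:=\ell_k(\alpha^{1/2})$. Repeating the construction on the $l$-side produces $v_{j(l)}$, $F_l^*$, $\gamma_l$, $\ell_l$, $T_l'$, $T_l$, and the transversality $\angle(\ell_k,\ell_l)\geq(\mathbf{C}_{\mathrm{sep}}-2)\alpha^\kappa$ follows from $|k-l|\geq\mathbf{C}_{\mathrm{sep}}$.

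The main obstacle is establishing the lower bound $\mathcal{H}^1(G_k)\geq\alpha^3/C$. Purely geometrically one has $\mathcal{H}^1(\gamma_k\cap T_l)\lesssim\alpha^{1/2}/\sin\angle(\ell_k,\ell_l)\lesssim\alpha^{1/2-\kappa}/\mathbf{C}_{\mathrm{sep}}$, which exceeds the mass $\tfrac{3}{4}\alpha^{1+\kappa}$ captured inside $\gamma_k$, so a priori the set $(E\cap\gamma_k)\setminus T_l$ could be empty. To rescue this I would prove the much stronger estimate $\mathcal{H}^1(F_k\cap T_l)\ll\alpha^{1+\kappa}$ from \eqref{form5}. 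The heuristic is that $\gamma_l\subset T_l$ is nearly parallel to $\ell_l$, so for $x\in F_k\cap T_l'$ there is a nontrivial range of directions $\theta$ near $w_l^\perp$ such that $\ell_{x,\theta}$ meets $\gamma_l\subset E$ as well, forcing $\#(E\cap\ell_{x,\theta})\geq 2$. A Fubini/coarea argument treating $F_k$ as a union of graphs over $\spa(w_l^\perp)$ (with Lipschitz constant $\lesssim 1/(\mathbf{C}_{\mathrm{sep}}\alpha^\kappa)$ by the transversality of the two direction clusters) converts \eqref{form5} into a bound of the shape $\mathcal{H}^1(F_k\cap T_l')\lesssim\delta/\alpha^{O(\kappa)}$, which is polynomially smaller than $\alpha^{1+\kappa}$ since $\delta=\alpha^7$; the annulus $T_l\setminus T_l'$ is handled by averaging instead over a wider angular window of radius $\sim\alpha^{1/2}$. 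With these bounds, $(F_k^*\cap\gamma_k)\setminus T_l$ has mass at least $\tfrac{1}{2}\alpha^{1+\kappa}\gg\alpha^3/C$, and any compact subset of the required measure serves as $G_k$; the construction of $G_l$ is symmetric. Executing the coarea and direction-averaging argument cleanly, while tracking the three different scales $\alpha$, $\alpha^\kappa$, and $\alpha^{1/2}$, is the technical heart of the lemma.
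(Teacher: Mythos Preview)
Your setup (pigeonholing to a single minigraph direction and applying Proposition~\ref{prop1} at scale $\alpha$) is fine, and you have correctly identified the obstacle: a single $\alpha^{1/2}$-tube $T_l$ intersects the graph $\gamma_k$ in a set of length $\lesssim \alpha^{1/2-\kappa}$, which can swallow the entire mass $\sim\alpha^{1+\kappa}$ you have produced on $\gamma_k$. However, your proposed repair has a genuine gap. The heuristic rests on the inclusion $\gamma_l\subset E$, which is false: $\gamma_l$ is the abstract Lipschitz graph output by Proposition~\ref{prop1}, and only the portion $F_l^{*}\cap\gamma_l$ (of mass $\sim\alpha^{1+\kappa}$) lies in $E$. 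A line through $x\in F_k\cap T_l'$ nearly parallel to $\ell_l$ will indeed meet the graph $\gamma_l$ somewhere, but that intersection point need not belong to $E$, so no second intersection with $E$ is forced and your Fubini/coarea step never starts. Replacing $\gamma_l$ by $E\cap\gamma_l$ does not help either: that set has mass only $\sim\alpha^{1+\kappa}$ and may sit far from $x$ along $T_l'$, so the angular window of lines from $x$ hitting it can be arbitrarily small relative to what is needed to turn $\delta$ into the bound $\mathcal{H}^1(F_k\cap T_l')\ll\alpha^{1+\kappa}$. The annulus $T_l\setminus T_l'$ suffers from the same defect.

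The paper sidesteps any further use of the defect bound and instead makes a \emph{second} application of Proposition~\ref{prop1}, this time to $F_k,F_l$ at the coarse scale $\alpha^\kappa$, producing $\mathbf{C}_{\mathrm{lip}}\alpha^\kappa$-Lipschitz graphs $\Gamma_k,\Gamma_l$ capturing half of $F_k,F_l$. One then selects two $\alpha^{2\kappa}$-separated density points $x_1,x_2\in F_k\cap\Gamma_k$ and three such points $y_1,y_2,y_3\in F_l\cap\Gamma_l$, each carrying mass $\gtrsim\alpha^3$ on some fine graph $\gamma^k_i$ or $\gamma^l_j$ inside a ball of radius $\alpha$. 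Because $\Gamma_l$ is genuinely transverse to every fine tube $T^k_i$ (angle $\gtrsim\mathbf{C}_{\mathrm{sep}}\alpha^\kappa$), one gets $\diam(T^k_i\cap\Gamma_l)\lesssim\alpha^{1/2-\kappa}$; since $\kappa=\tfrac{1}{10}$ gives $\alpha^{1/2-\kappa}\ll\alpha^{2\kappa}$, each $T^k_i$ can meet at most one ball $B(y_j,\alpha)$, and symmetrically each $T^l_j$ meets at most one $B(x_i,\alpha)$. Five tubes versus six pairs $(i,j)$ forces, by pigeonhole, one pair whose tubes both miss the opposite ball; this pair furnishes $G_k,G_l$. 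The two-scale use of Proposition~\ref{prop1} and the $2$-versus-$3$ combinatorics are the ingredients your plan is missing.
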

Once the objects in Lemma \ref{lemma3} are found, it follows from a relatively simple geometric argument, presented below, that positively many lines intersect $E$ twice (the lines in question are depicted in red colour in Figure \ref{fig1}):
\begin{lemma}\label{l:lemma-cor}
There exists a set of lines $\mathcal{L}(G_{k},G_{l})$ of measure $\eta(\mathcal{L}(G_{k},G_{l})) \gtrsim \alpha^{7}$ such that $\ell \cap G_{k} \neq \emptyset$ and $\ell \cap G_{l} \neq \emptyset$ for all $\ell \in \mathcal{L}(G_{k},G_{l})$. In particular, since $G_{k},G_{l} \subset E$ are disjoint,
\begin{equation}\label{tform3} \int_{\mathcal{L}(E)} \#(E \cap \ell) - 1 \, d\eta(\ell) \gtrsim \eta(\mathcal{L}(G_{k},G_{l})) \gtrsim \alpha^{7}. \end{equation}
\end{lemma}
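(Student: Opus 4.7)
The plan is to apply the integralgeometric ``lines-spanned-by-two-curves'' formula of Appendix~\ref{section:lines-spanned-by-rectifiable-curves} to the map $F\colon G_{k}\times G_{l}\to\mathcal{A}$, $F(x,y)=\ell_{x,y}$. Writing $\theta_{k}(x)$, $\theta_{l}(y)$ for the tangent directions of $\gamma_{k}$, $\gamma_{l}$ at $x$, $y$ and $\phi(x,y)$ for the direction of $\ell_{x,y}$, the formula reads
\begin{equation*}
\int_{\mathcal{A}} \#(G_{k}\cap\ell)\cdot\#(G_{l}\cap\ell)\,d\eta(\ell)
= \int_{G_{k}\times G_{l}}\frac{|\sin(\theta_{k}(x)-\phi(x,y))|\,|\sin(\theta_{l}(y)-\phi(x,y))|}{|x-y|}\,d\mathcal{H}^{1}(x)\,d\mathcal{H}^{1}(y).
\end{equation*}

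The first step is a pointwise lower bound on the integrand. For $(x,y)\in G_{k}\times G_{l}$, Lemma~\ref{lemma3}(5) gives $\dist(x,\ell_{l})>\alpha^{1/2}$ (because $x\notin T_{l}=\ell_{l}(\alpha^{1/2})$) and $\dist(y,\ell_{l})\le C\alpha$ (because $y\in\gamma_{l}\cap B(1)\subset T_{l}'=\ell_{l}(C\alpha)$). Projecting $x-y$ onto $\ell_{l}^{\perp}$ and using $|x-y|\le 2$ yields $|\sin\angle(\ell_{x,y},\ell_{l})|\gtrsim\alpha^{1/2}$. Combined with $|\angle(\theta_{l}(y),\ell_{l})|\le\mathbf{C}_{\mathrm{lip}}\alpha\ll\alpha^{1/2}$, this gives $|\sin(\theta_{l}(y)-\phi(x,y))|\gtrsim\alpha^{1/2}$, and the symmetric argument (with $k,l$ exchanged) gives $|\sin(\theta_{k}(x)-\phi(x,y))|\gtrsim\alpha^{1/2}$. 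Therefore the integrand is $\gtrsim\alpha/|x-y|\gtrsim\alpha$ on $G_{k}\times G_{l}$.

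Second, I would pin the multiplicity on the left-hand side to exactly one on $\mathcal{L}(G_{k},G_{l})$. Any $\ell\in\mathcal{L}(G_{k},G_{l})$ satisfies $|\sin\angle(\ell,\ell_{k})|\gtrsim\alpha^{1/2}$ by the argument just given, so its slope in the $\ell_{k}$-coordinate frame strictly exceeds $\mathbf{C}_{\mathrm{lip}}\alpha$---the Lipschitz constant of $\gamma_{k}$. A standard monotonicity argument then forces $\#(\gamma_{k}\cap\ell)\le 1$, and symmetrically $\#(\gamma_{l}\cap\ell)\le 1$, so $\#(G_{k}\cap\ell)=\#(G_{l}\cap\ell)=1$ on all of $\mathcal{L}(G_{k},G_{l})$.

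Combining the two estimates with $\mathcal{H}^{1}(G_{k}),\mathcal{H}^{1}(G_{l})\ge\alpha^{3}/C$,
\begin{equation*}
\eta(\mathcal{L}(G_{k},G_{l}))
= \int_{\mathcal{A}}\#(G_{k}\cap\ell)\cdot\#(G_{l}\cap\ell)\,d\eta(\ell)
\gtrsim \alpha\cdot\mathcal{H}^{1}(G_{k})\,\mathcal{H}^{1}(G_{l})
\gtrsim \alpha^{7}.
\end{equation*}
Since $C\alpha<\alpha^{1/2}$ for $\alpha$ small, the containments $G_{k}\subset T_{k}'$ and $G_{l}\subset B(1)\setminus T_{k}$ force $G_{k}\cap G_{l}=\emptyset$, so each $\ell\in\mathcal{L}(G_{k},G_{l})$ meets $E$ in at least two points, giving the second inequality of \eqref{tform3}. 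I expect the Jacobian lower bound to be the main obstacle: the $\alpha^{1/2}$-scale separation engineered in Lemma~\ref{lemma3}(5) is just strong enough to dominate the $\mathbf{C}_{\mathrm{lip}}\alpha$-wobble of each Lipschitz graph, and the budget $\alpha^{1/2}\cdot\alpha^{1/2}\cdot\alpha^{3}\cdot\alpha^{3}=\alpha^{7}$ is precisely what produces the exponent on the right-hand side of \eqref{tform2}.
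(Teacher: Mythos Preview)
Your proposal is correct and coincides with the alternative argument the paper itself sketches in Remark~\ref{remark:eta-L-exact} via the Jacobian formula of Appendix~\ref{section:lines-spanned-by-rectifiable-curves}. You are in fact more careful than that remark: you explicitly supply the multiplicity-one step (any $\ell\in\mathcal{L}(G_{k},G_{l})$ has slope $\gtrsim\alpha^{1/2}$ in the $\ell_{k}$-frame, exceeding the $\mathbf{C}_{\mathrm{lip}}\alpha$-slope of $\gamma_{k}$) needed to pass from the counting integral $\int \#(G_{k}\cap\ell)\,\#(G_{l}\cap\ell)\,d\eta(\ell)$ of Lemma~\ref{lemma:lines-spanned-by-rectifiable-curves} to $\eta(\mathcal{L}(G_{k},G_{l}))$ itself.

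The paper's \emph{primary} proof of the lemma takes a different, more elementary route that avoids the Jacobian formula entirely: it fixes $x\in G_{l}$, bounds $\mathcal{H}^{1}(\Theta(x,G_{k}))\gtrsim\alpha^{1/2}\mathcal{H}^{1}(G_{k})$ by a direct covering argument, applies Fubini, and then uses that $\gamma_{l}$ is a $C\alpha^{-1/2}$-Lipschitz graph over $\ell^{\perp}$ to convert $\mathcal{H}^{1}(G_{l}(\theta))$ into $\mathcal{H}^{1}(\pi_{\theta}(G_{l}(\theta)))$. Both arguments rest on the same key geometric observation---any line meeting both $G_{k}$ and $G_{l}$ makes angle $\gtrsim\alpha^{1/2}$ with each of $\ell_{k},\ell_{l}$---and both arrive at the identical budget $\alpha^{1/2}\cdot\alpha^{1/2}\cdot\alpha^{3}\cdot\alpha^{3}=\alpha^{7}$. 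Your version is slicker once the appendix formula is in hand; the paper's primary version is self-contained.
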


Proposition \ref{prop2} follows immediately by Lemma \ref{l:lemma-cor}. We will next derive Lemma \ref{l:lemma-cor} from Lemma \ref{lemma3}. (See Remark \ref{remark:eta-L-exact} and Appendix \ref{section:lines-spanned-by-rectifiable-curves} for an alternative proof of Lemma \ref{l:lemma-cor}.) 

\begin{proof}
The key geometric observation is the following: if $\ell \subset \R^{2}$ is any line with
\begin{displaymath} G_{k} \cap \ell \neq \emptyset \neq G_{l} \cap \ell, \end{displaymath}
then $\ell$ must make an angle $\gtrsim \alpha^{1/2}$ with both $\ell_{k}$ and $\ell_{l}$, see Figure \ref{fig1}: indeed, if for example $\angle(\ell,\ell_{l}) \ll \alpha^{1/2}$ and $\ell \cap G_{l} \neq \emptyset$, then $\ell \cap B(1) \subset T_{l}$, and hence $\ell \cap G_{k} = \emptyset$ by \eqref{form31}. It follows that both $\ell_{k},\ell_{l}$ are $C\alpha^{-1/2}$-graphs over $\ell^{\perp}$, for any line $\ell$ connecting $G_{k}$ and $G_{l}$. But since $\gamma_{k},\gamma_{l}$ were by definition $\mathbf{C}_{\mathrm{lip}}\alpha$-Lipschitz graphs over $\ell_{k},\ell_{l}$, it follows that also $\gamma_{k},\gamma_{l}$ are $C\alpha^{-1/2}$-Lipschitz graphs over $\ell^{\perp}$ (assuming that $\alpha > 0$ is small enough).

To prove the lower bound \eqref{tform3}, start by fixing $x \in G_{l} \subset \gamma_{l}$, recall that $\ell_{x,\theta} := \pi_{\theta}^{-1}\{\pi_{\theta}(x)\}$, and consider the set of directions
\begin{displaymath} \Theta(x,G_{k}) := \{\theta \in [0,\pi) : \ell_{x,\theta} \cap G_{k} \neq \emptyset\}. \end{displaymath}
With this notation, we claim that
\begin{equation}\label{form28} \mathcal{H}^{1}(\Theta(x,G_{k})) \gtrsim \alpha^{1/2}\mathcal{H}^{1}(G_{k}), \qquad x \in G_{l}. \end{equation}
Indeed, if $\{B(\theta_{j},r_{j})\}_{j \in \N}$ is an arbitrary cover of $\Theta(x,G_{k})$, then the tubes $\ell_{x,\theta_{j}}(Cr_{j})$ cover $G_{k}$, where $C > 0$ is an absolute constant. This is because $G_{k}$ is covered by the cones $C_{j} := \bigcup \{\ell_{x,\theta} : \theta \in B(\theta_{j},r_{j})\}$ by definition, and each intersection $G_{k} \cap C_{j} \subset B(1) \cap C_{j}$ is further covered by a tube of the form $\ell_{x,\theta_{j}}(Cr_{j})$. Now recall that $\gamma_{k} \supset G_{k}$ is an $\alpha^{-1/2}$-Lipschitz graph over each line $\ell_{x,\theta_{j}}^{\perp}$: this gives
\begin{displaymath} \alpha^{-1/2} \sum_{j \in \N} r_{j} \gtrsim \sum_{j \in \N} \mathcal{H}^{1}(G_{k} \cap \ell_{x,\theta_{j}}(r_{j})) \geq \mathcal{H}^{1}(G_{k}), \end{displaymath}
which implies \eqref{form28}.

We now infer from \eqref{form28} and Fubini's theorem that
\begin{align} \int_{0}^{\pi} & \mathcal{H}^{1}(\{x \in G_{l} : \theta \in \Theta(x,G_{k})\}) \, d\theta \notag\\
&\label{form29} = \int_{G_{l}} \mathcal{H}^{1}(\Theta(x,G_{k})) \, d\mathcal{H}^{1}(x) \gtrsim \alpha^{1/2}\mathcal{H}^{1}(G_{k})\mathcal{H}^{1}(G_{l}).  \end{align}
To proceed, write $G_{l}(\theta) := \{x \in G_{l} : \theta \in \Theta(x,G_{k})\}$. We claim that
\begin{equation}\label{form30} \mathcal{H}^{1}(G_{l}(\theta)) \neq 0 \quad \Longrightarrow \quad \mathcal{H}^{1}(\pi_{\theta}(G_{l}(\theta))) \gtrsim \alpha^{1/2}\mathcal{H}^{1}(G_{l}(\theta)), \qquad \theta \in [0,\pi). \end{equation}
This will complete the proof of the corollary, because \eqref{form29} then implies
\begin{displaymath} \int_{0}^{\pi} \mathcal{H}^{1}(\pi_{\theta}(G_{l}(\theta)) \, d\theta \stackrel{\eqref{form29}}{\gtrsim} \alpha \mathcal{H}^{1}(G_{k})\mathcal{H}^{1}(G_{l}) \stackrel{\textup{L. } \ref{lemma3}}{\gtrsim} \alpha^{7}, \end{displaymath}
and the left hand side above is a lower bound for $\eta(\mathcal{L}(G_{k},G_{l}))$. 

Finally, let us prove \eqref{form30}. If $\mathcal{H}^{1}(G_{l}(\theta)) \neq 0$, then $\theta \in \Theta(x,\gamma_{k})$ for at least one $x \in G_{l}$, which means that $\ell_{x,\theta} = \pi_{\theta}^{-1}\{\pi_{\theta}(x)\}$ intersects both $G_{k}$ and $G_{l}$. Thus, $\gamma_{l}$ is a $C\alpha^{-1/2}$-Lipschitz graph over the line $\ell_{x,\theta}^{\perp}$. Consequently, the relation $\mathcal{H}^{1}(\pi_{\theta}(H)) \gtrsim \alpha^{1/2}\mathcal{H}^{1}(H)$ holds for all $\mathcal{H}^{1}$ measurable subsets $H \subset \gamma_{l}$, in particular for $H := G_{l}(\theta)$.\end{proof}

\begin{remark}
\label{remark:eta-L-exact}
In fact, we have an exact expression for $\eta(\mathcal{L}(G_{k},G_{l}))$: 
\begin{align}
\label{eq:eta-L-exact}
\eta(\mathcal{L}(G_{k},G_{l}))
=
\iint_{G_k \times G_l} \frac{|\pi_{\theta(x_k, x_l)}(\tau_k(x_k))| \, |\pi_{\theta(x_k, x_l)}(\tau_l(x_l))|}{|x_k-x_l|} \, d(\mathcal{H}^1 \times \mathcal{H}^1)(x_k, x_l).
\end{align}
In \eqref{eq:eta-L-exact}, $\tau_k(x)$ denotes the unit tangent vector to $\gamma_k$ at $x \in \gamma_k$, and $\tau_l(x)$ is defined similarly. For distinct $x, x' \in \R^2$, $\theta(x, x')$ denotes the angle $\theta$ such that $\pi_\theta(x) = \pi_\theta(x')$.

Now we show how \eqref{eq:eta-L-exact} implies Lemma \ref{l:lemma-cor}. By the key geometric observation in the first paragraph of the proof of Lemma \ref{l:lemma-cor} and the fact that $G_k, G_l \subset B(1)$, the integrand in \eqref{eq:eta-L-exact} is $\gtrsim \frac{\alpha^{1/2} \alpha^{1/2}}{1} = \alpha$. Thus, $\eta(\mathcal{L}(G_{k},G_{l})) \gtrsim \alpha \mathcal{H}^1(G_k) \mathcal{H}^1(G_k) \gtrsim \alpha^7$.

We state and prove a more general form of \eqref{eq:eta-L-exact} in Appendix \ref{section:lines-spanned-by-rectifiable-curves}.
\end{remark}

The remainder of this section is devoted to constructing the objects listed in Lemma \ref{lemma3}. This is based on the assumption \eqref{form19}, that is, $\mathcal{H}^{1}(F_{k}) \geq \alpha^{2\kappa}$ and $\mathcal{H}^{1}(F_{l}) \geq \alpha^{2\kappa}$. Recall also that $F_{k},F_{l}$ were the unions of the minigraphs in $\mathcal{F}_{k}$ and $\mathcal{F}_{l}$. The minigraphs in $\mathcal{F}_{k}$ make an angle $\leq \alpha^{\kappa}$ with $L_{k}$, while the minigraphs in $\mathcal{F}_{l}$ make an angle $\leq \alpha^{\kappa}$ with $L_{l}$. Furthermore, $\angle(L_{k},L_{l}) \geq \mathbf{C}_{\mathrm{sep}}\alpha^{\kappa}$, so the minigraphs from $\mathcal{F}_{k}$ and $\mathcal{F}_{l}$ point in quantitatively different directions. We also recall that $\mathcal{F}_{k}$ (respectively $\mathcal{F}_{l})$ can be expressed as a union of certain consecutive families $\mathcal{E}_{i}$:
\begin{equation}\label{form20} \mathcal{F}_{k} = \mathcal{E}_{s} \cup \mathcal{E}_{s + 1} \cup \ldots \cup \mathcal{E}_{s + m} \quad \text{and} \quad \mathcal{F}_{l} = \mathcal{E}_{t} \cup \ldots \cup \mathcal{E}_{t + m}. \end{equation}
Some of these families may be empty, but not all, according to \eqref{form19b}. Of course
\begin{equation}\label{form21} m \lesssim \alpha^{-1}, \end{equation}
since there were no more than $\alpha^{-1}$ of the families $\mathcal{E}_{j}$ altogether. 

\subsection{Sketch of the proof} We now explain the proof strategy with a picture.
\begin{figure}[h!]
\begin{center}
\begin{overpic}[scale = 0.95]{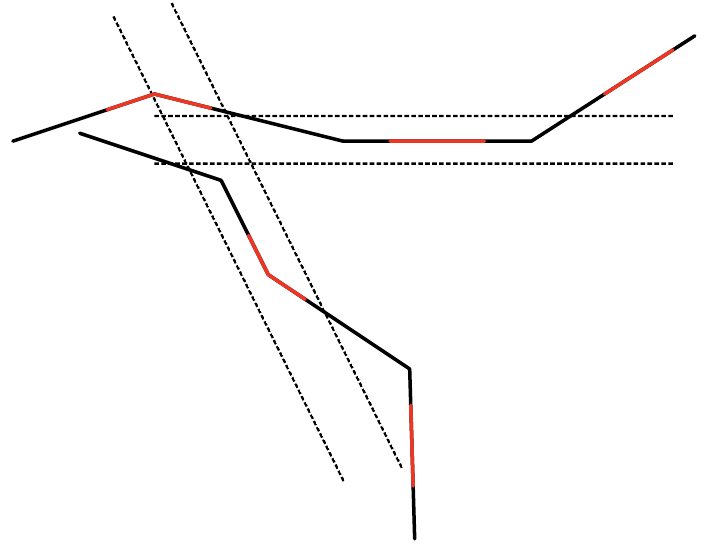}
\put(10,65){$F_{l}$}
\put(60,25){$F_{k}$}
\put(38.5,30){$G_{k}$}
\put(60,49){$G_{l}$}
\put(83,56){$T_{l}$}
\put(20,69){$T_{k}$}
\end{overpic}
\caption{Finding the graphs and tubes claimed by Lemma \ref{lemma3}.}\label{fig2}
\end{center}
\end{figure}
In Figure \ref{fig2}, we have depicted the sets $F_{k}$ and $F_{l}$, which are roughly speaking $\alpha^{\kappa}$-Lipschitz graphs over the lines $L_{k},L_{l}$ by Proposition \ref{prop1} (details will follow). Both $F_{k}$ and $F_{l}$ are, moreover, tiled by $\lesssim \alpha^{-1}$ of the sets $E_{j}$. Most of sets $E_{j}$ are (individually) contained on $\alpha$-Lipschitz graphs $\gamma_{j}$, by another application of Proposition \ref{prop1}. The red sets shown in Figure \ref{fig2} illustrate sets of the form
\begin{displaymath} G_{j} = E_{j} \cap \gamma_{j} \cap B_{j}, \end{displaymath}
where $B_{j}$ is some ball of radius $\alpha$ with the property that $\mathcal{H}^{1}(G_{j}) \sim_{\alpha} \mathcal{H}^{1}(E_{j})$. Each $G_{j}$ is contained in a tube $T_{j}$ of width $\alpha^{1/2}$ (or even a tube of width $\alpha$, which was also required in Lemma \ref{lemma3}). So, picking $G_{k} \subset F_{k}$ and $G_{l} \subset F_{l}$ arbitrarily, we would satisfy all the points (1)-(5) in Lemma \ref{lemma3}, except for the inclusions \eqref{form31}.

The problem is that if we pick $G_{k} \subset F_{k}$ and $G_{l} \subset F_{l}$ arbitrarily, the tube $T_{k}$ associated with $G_{k}$ might intersect $G_{l}$, or vice versa, violating \eqref{form31}. To satisfy \eqref{form31}, we need to pick $G_{k},G_{l}$ in such a way that the $G_{k}$-tube avoids $G_{l}$ and the $G_{l}$-tube avoids $G_{k}$. To achieve this, we roughly choose $3$ well-separated sets $G^{l}_{1},G^{l}_{2},G^{l}_{3} \subset F_{l}$, and $2$ further well-separated sets $G^{k}_{1},G^{k}_{2} \subset F_{k}$. 

Then, we use the "transversality" of the graphs $F_{k},F_{l}$ to deduce the following: each $G^{k}_{i}$-tube can intersect at most one of the sets $G^{l}_{j}$, and vice versa. At this point, we may deduce from the pigeonhole principle that there must exists a pair $(G^{k}_{i},G^{l}_{j})$ such that the $G^{k}_{i}$-tube does not intersect $G^{l}_{j}$, and the $G^{l}_{j}$-tube does not intersect $G^{k}_{i}$. Indeed, there are six pairs $(G^{k}_{i},G^{l}_{j})$, but only five tubes. This will complete the proof.\\

\subsection{Proof} We turn to the details. First, we apply Proposition \ref{prop1} to the sets $F_{k},F_{l}$, each of which can be written as a finite union of $\alpha^{\kappa}$-Lipschitz minigraphs  over the lines $L_{k},L_{l}$, respectively. It follows from the choice of constants $\delta = \epsilon^{70}/\mathbf{C}_{\mathrm{thm}}$ and $\alpha = (\epsilon/\mathbf{C}_{\mathrm{alp}})^{10}$ made in Section \ref{ss:only-one-graph} that $\delta \ll \alpha^{5\kappa}$, assuming that $\mathbf{C}_{\mathrm{thm}}$ is chosen sufficiently small compared to the absolute constant $\mathbf{C}_{\mathrm{alp}}$. Writing $\alpha^{5\kappa} = (\alpha^{\kappa})^{3}\alpha^{2\kappa}$, this means that the main hypothesis of Proposition \ref{prop1} is valid with constants "$\alpha^{\kappa}$" and "$\tfrac{1}{2}\alpha^{2\kappa}$" in place of "$\alpha$" and "$\epsilon$". It follows that there exist $\mathbf{C}_{\mathrm{lip}}\alpha^{\kappa}$-Lipschitz graphs $\Gamma_{k},\Gamma_{l}$ over $L_{k},L_{l}$, respectively, which cover most of $F_{k}$ and $F_{l}$ in the sense
\begin{displaymath} \mathcal{H}^{1}(F_{k} \, \setminus \, \Gamma_{k}) \leq \tfrac{1}{2}\alpha^{2\kappa} \stackrel{\eqref{form19}}{\leq} \tfrac{1}{2}\mathcal{H}^{1}(F_{k}) \quad \text{and} \quad \mathcal{H}^{1}(F_{l} \, \setminus \, \Gamma_{l}) \leq \tfrac{1}{2}\mathcal{H}^{1}(F_{l}). \end{displaymath}
We write $F_{k}' := F_{k} \cap \Gamma_{k}$ and $F_{l}' := F_{l} \cap \Gamma_{l}$. Next, recall from \eqref{form20} that 
\begin{displaymath} F_{k} = E_{s} \cup \ldots \cup E_{s + m} \quad \text{and} \quad F_{l} = E_{t} \cup \ldots \cup E_{t + m}, \end{displaymath}
and each $E_{j}$ is a finite union of $\alpha$-Lipschitz minigraphs $\mathcal{E}_{j}$ over a certain line (which makes an angle $\leq \alpha^{\kappa}$ with $L_{k}$). Applying Proposition \ref{prop1} again, for each $E_{j}$ with either $j \in \{s,\ldots,s + m\}$ or $j \in \{t,\ldots,t + m\}$, we find Lipschitz graphs $\gamma_{j}$ with constant $\leq \mathbf{C}_{\mathrm{lip}}\alpha$ and the property
\begin{displaymath} \mathcal{H}^{1}(E_{j} \, \setminus \, \gamma_{j}) \lesssim \alpha^{2}, \qquad s \leq j \leq s + m \text{ or } t \leq j \leq t + m. \end{displaymath}
For this application of Proposition \ref{prop1} to be legitimate, we need $\delta \ll \alpha^{3}(\alpha^{2})^{2} = \alpha^{7}$, which also follows from our choice of constants recalled above, taking $\mathbf{C}_{\mathrm{thm}} \gg \mathbf{C}_{\mathrm{alp}}^{70}$. We write $E_{j}' := E_{j} \cap \gamma_{j}$. With these choices, a major part of $F_{k}'$ is covered by the union of the graphs $\gamma_{j}$: indeed since $F_{k}' \subset F_{k} \subset \left(E_{s} \cup \ldots \cup E_{s + m}\right)$, we have
\begin{displaymath} \mathcal{H}^{1}\left(F_{k}' \, \setminus \, \bigcup_{j = 1}^{m} E_{s + j}' \right) \leq \sum_{j = 1}^{m} \mathcal{H}^{1}(E_{s + j} \, \setminus \, \gamma_{s + j}) \lesssim \sum_{j = 1}^{m} \alpha^{2} \stackrel{\eqref{form21}}{\lesssim} \alpha. \end{displaymath}
Since $\mathcal{H}^{1}(F_{k}') \gtrsim \mathcal{H}^{1}(F_{k}) \geq \alpha^{2\kappa}$, and $\kappa = \tfrac{1}{10}$, we infer that at least half of $F_{k}'$ is covered by the (subsets of) $\alpha$-Lipschitz graphs $E_{j}'$ with $s \leq j \leq s + m$. The same conclusion \emph{mutatis mutandis} holds for $F_{l}'$ and the sets $E_{j}'$ with $t \leq j \leq t + m$. We finally redefine
\begin{displaymath} F_{k} := F_{k}' \cap \bigcup_{j = 1}^{m} E_{s + j}' \quad \text{and} \quad F_{l} := F_{l}' \cap \bigcup_{j = 1}^{m} E_{t + j}'. \end{displaymath}
This should cause no confusion, since the original sets $F_{k},F_{l}$ will no longer be used. We list all the properties of $F_{k},F_{l}$ we will need in the sequel:
\begin{itemize}
\item $F_{k},F_{l} \subset E$ and $\mathcal{H}^{1}(F_{k}) \gtrsim \alpha^{2\kappa}$ and $\mathcal{H}^{1}(F_{l}) \gtrsim \alpha^{2\kappa}$ (compare with \eqref{form19}),
\item $F_{k}$ is covered by the Lipschitz graph $\Gamma_{k}$ over $L_{k}$ with constant $\leq \mathbf{C}_{\mathrm{lip}}\alpha^{\kappa}$,
\item $F_{l}$ is covered by the Lipschitz graph $\Gamma_{l}$ over $L_{l}$ with constant $\leq \mathbf{C}_{\mathrm{lip}}\alpha^{\kappa}$,
\item $F_{k}$ is covered by the union of $\lesssim \alpha^{-1}$ Lipschitz graphs $\gamma_{s},\ldots,\gamma_{s + m}$ with constant $\leq \mathbf{C}_{\mathrm{lip}}\alpha$ over certain lines $\ell_{s + j}$ making an angle $\leq \alpha^{\kappa}$ with $L_{k}$,
\item $F_{l}$ is covered by the union of $\lesssim \alpha^{-1}$ Lipschitz graphs $\gamma_{t},\ldots,\gamma_{t + m}$ with constant $\leq \mathbf{C}_{\mathrm{lip}}\alpha$ over certain lines $\ell_{t + j}$ making an angle $\leq \alpha^{\kappa}$ with $L_{l}$.
\end{itemize}

We have now defined carefully the objects $F_{k}$ and $F_{l}$ in Figure \ref{fig2}. In defining the objects $E_{k}$ and $E_{l}$ in the same picture, there is the technical problem that the "initial" sets $E_{j}$ need not be localised, as the picture suggests. This will be easily fixed by intersecting the initial sets $E_{j}$ with balls. First, using that $\mathcal{H}^{1}(F_{k}) \gtrsim \alpha^{2\kappa}$, we choose two special points $x_{1},x_{2} \in \bar{F}_{k}$ with the properties
\begin{equation}\label{form23} |x_{1} - x_{2}| \gtrsim \alpha^{2\kappa} \quad \text{and} \quad \mathcal{H}^{1}(F_{k} \cap B(x_{j},\alpha)) \geq \alpha^{2} \text{ for } j \in \{1,2\}. \end{equation}
This can be arranged, because the set of points $x \in F_{k}$ with $\mathcal{H}^{1}(F_{k} \cap B(x,\alpha)) \leq \alpha^{2}$ has total length at most $\lesssim \alpha \ll \mathcal{H}^{1}(F_{k})$. Thus, the admissible points for the second condition in \eqref{form23} have total length $\geq \tfrac{1}{2}\mathcal{H}^{1}(F_{k}) \gtrsim \alpha^{2\kappa}$. Then, to finish the selection, it remains to pick two of these points with separation $\alpha^{2\kappa}$: this is possible because $F_{k}$ lies on a Lipschitz graph with constant $\leq 1$, so in particular $\mathcal{H}^{1}(F_{k} \cap B(x,r)) \lesssim r$ for all $r > 0$.

Next, we move attention from $F_{k}$ to $F_{l}$. This time we pick $3$ special points $y_{1},y_{2},y_{3} \in F_{l}$ with properties similar to those in \eqref{form23}:
\begin{equation}\label{form24} |y_{i} - y_{j}| \gtrsim \alpha^{2\kappa} \text{ for } i \neq j \quad \text{and} \quad \mathcal{H}^{1}(F_{l} \cap B(y_{j},\alpha)) \geq \alpha^{2} \text{ for } j \in \{1,2,3\}. \end{equation}
The details of the selection are the same as we have seen above.

Next, recall that both $F_{k}$ and $F_{l}$ can be written as a finite union of (subsets of) $\mathbf{C}_{\mathrm{lip}}\alpha$-Lipschitz graphs: the covering graphs for $F_{k}$ were denoted $\gamma_{s},\ldots,\gamma_{s + m}$ and the covering graphs for $F_{l}$ were denoted $\gamma_{t},\ldots,\gamma_{t + m}$, where $m \lesssim \alpha^{-1}$. Since $\mathcal{H}^{1}(F_{k} \cap B(x_{1},\alpha)) \geq \alpha^{2}$, at least one of the graphs $\gamma_{s},\ldots,\gamma_{s + m}$ must have large intersection with $F_{k} \cap B(x_{1},\alpha)$. We denote this graph by $\gamma^{k}_{1}$; then we have
\begin{equation}\label{form26} \mathcal{H}^{1}(F_{k} \cap \gamma^{k}_{1} \cap B(x_{1},\alpha)) \gtrsim \alpha^{3}. \end{equation}
We find similarly a graph $\gamma^{k}_{2} \in \{\gamma_{s},\ldots,\gamma_{s + m}\}$ such that $\mathcal{H}^{1}(F_{k} \cap \gamma^{k}_{2} \cap B(x_{2},\alpha)) \gtrsim \alpha^{3}$. Then, we also repeat the argument for the three balls $B(y_{j},\alpha)$: we find three graphs $\gamma^{l}_{1},\gamma^{l}_{2},\gamma^{l}_{3} \in \{\gamma_{t},\ldots,\gamma_{t + m}\}$ with the property
\begin{equation}\label{form27} \mathcal{H}^{1}(F_{l} \cap B(y_{j},\alpha) \cap \gamma^{l}_{j}) \gtrsim \alpha^{3}, \qquad 1 \leq j \leq 3. \end{equation}
The sets 
\begin{align}\label{form34} & G_{i}^{k} := F_{k} \cap \gamma_{i}^{k} \cap B(x_{i},\alpha),\, \, \, i=1,2, \quad \text{and} \nonumber \\
& G_{j}^{l} := F_{l} \cap \gamma_{j}^{l} \cap B(y_{j},\alpha), \,\,\, j=1,2,3 \end{align}
are the ones we informally discussed below Figure \ref{fig2}.

Next, we associate the lines and tubes (required by Lemma \ref{lemma3}) to the sets $G_{i}^{k},G_{j}^{l}$. We associate to each graph $\gamma_{i}^{k}$ or $\gamma_{j}^{l}$ an affine line $\ell_{i}^{k}$ or $\ell_{j}^{l}$ with the following properties:
\begin{itemize}
\item $\gamma_{i}^{k}$ is a $\mathbf{C}_{\mathrm{lip}}\alpha$-Lipschitz graph over $\ell_{i}^{k}$ for $i \in \{1,2\}$,
\item $\gamma_{j}^{l}$ is a $\mathbf{C}_{\mathrm{lip}}\alpha$-Lipschitz graph over $\ell_{j}^{l}$ for $j \in \{1,2,3\}$,
\item The lines are chosen so that
\begin{displaymath} G_{j}^{k} \subset \ell^{k}_{i}(C\alpha) \text{ for } i \in \{1,2\} \quad \text{and} \quad G_{j}^{l} \subset \ell_{j}^{l}(C\alpha)  \text{ for } j \in \{1,2,3\}, \end{displaymath} 
\end{itemize}
where $C \sim \mathbf{C}_{\mathrm{lip}}$. We now define
\begin{displaymath} (T^{k}_{i})' := \ell_{i}^{k}(C\alpha) \quad \text{and} \quad T^{k}_{i} := \ell_{i}^{k}(\alpha^{1/2}) \end{displaymath}
for $i \in \{1,2\}$, and similarly
\begin{displaymath} (T^{l}_{j})' := \ell_{j}^{l}(C\alpha) \quad \text{and} \quad T^{l}_{j} := \ell_{j}^{l}(\alpha^{1/2}) \end{displaymath}
for $j \in \{1,2,3\}$. Thus, $G^{k}_{i} \subset (T^{k}_{i})' \subset T^{k}_{i}$ and $G^{l}_{j} \subset (T^{l}_{j})' \subset T^{l}_{j}$. Since moreover $\mathcal{H}^{1}(G^{k}_{i}) \gtrsim \alpha^{3}$ and $\mathcal{H}^{1}(G^{l}_{j}) \gtrsim \alpha^{3}$ by \eqref{form26}-\eqref{form27}, \textbf{any} pair $(G^{k}_{i},G^{l}_{j})$ (with associated lines and tubes) would now satisfy all the requirements of Lemma \ref{lemma3}, except perhaps the inclusions \eqref{form31}.

We will now use the pigeonhole principle to show that at least one of the pairs $(G^{k}_{i},G^{l}_{j})$ also satisfies the inclusions \eqref{form31}. The main geometric observation is the following:
\begin{equation}\label{form25} \diam(T^{k}_{i} \cap \Gamma_{l}) \lesssim \alpha^{1/2 - \kappa} \quad \text{and} \quad \diam(T^{l}_{j} \cap \Gamma_{k}) \lesssim \alpha^{1/2 - \kappa}. \end{equation}
\begin{figure}[h!]
\begin{center}
\begin{overpic}[scale = 0.95]{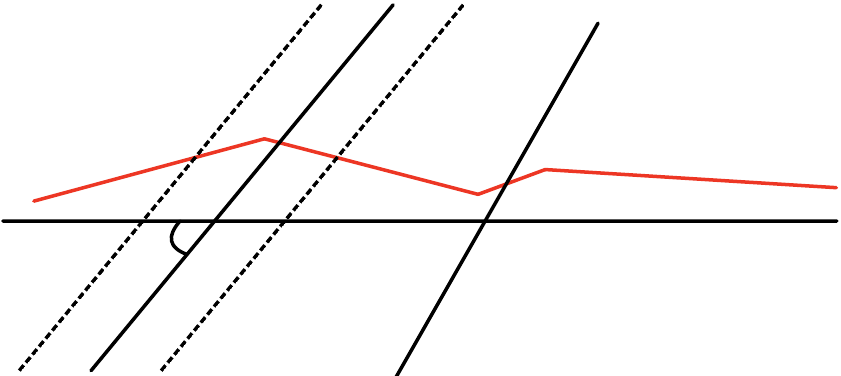}
\put(75,26){$\Gamma_{l}$}
\put(75,13){$L_{l}$}
\put(60,35){$L_{k}$}
\put(31,10){$T^{k}_{i}$}
\put(13,10){$\ell^{k}_{j}$}
\end{overpic}
\caption{Transversality of $T_{i}^{k}$ and $\Gamma_{l}$. The angle between $\ell_{j}^{k}$ and $L_{l}$ is $\gtrsim C\alpha^{\kappa}$.}\label{fig3}
\end{center}
\end{figure}
The first inequality holds for $i \in \{1,2\}$, the second for $j \in \{1,2,3\}$. The proof of \eqref{form25} is contained in Figure \ref{fig3}. Recall that $T_{i}^{k}$ is an $\alpha^{1/2}$-tube around a certain line $\ell^{k}_{i}$ with $\angle(\ell^{k}_{i},L_{k}) \leq \alpha^{\kappa}$. On the other hand, $\angle(L_{k},L_{l}) \geq \mathbf{C}_{\mathrm{sep}}\alpha^{\kappa}$, so also $\angle(\ell_{i}^{k},L_{l}) \geq (\mathbf{C}_{\mathrm{sep}} - 1)\alpha^{\kappa}$. Finally, $\Gamma_{l}$ is a $\mathbf{C}_{\mathrm{lip}}\alpha^{\kappa}$-Lipschitz graph over $L_{l}$, so every tangent of $\Gamma_{l}$ makes an angle $\gtrsim \mathbf{C}_{\mathrm{sep}}\alpha^{\kappa}$ with $\ell^{k}_{i}$, since we chose $\mathbf{C}_{\mathrm{sep}}$ much larger than $\mathbf{C}_{\mathrm{lip}}$ in Section \ref{ss:only-one-graph}. Thus $\Gamma_{l}$ is an $\alpha^{-\kappa}$-Lipschitz graph over $(\ell_{j}^{k})^{\perp}$. It follows that
\begin{displaymath} \diam(T_{i}^{k} \cap \Gamma_{l}) \leq \mathcal{H}^{1}(T_{i}^{k} \cap \Gamma_{l}) \lesssim \alpha^{1/2 - \kappa}. \end{displaymath}
Now that we have proved \eqref{form25}, recall from \eqref{form24} the three balls $B(y_{j},\alpha)$, all of which were centred at $y_{j} \in F_{l} \subset \Gamma_{l}$, and whose centres $y_{j}$ had pairwise separation $\gtrsim \alpha^{2\kappa}$. Since $\kappa = \tfrac{1}{10}$, we have $\alpha^{1/2 - \kappa} \ll \alpha^{2\kappa}$ for $\alpha > 0$ small enough (or in other words assuming that the constant $\mathbf{C}_{\mathrm{alp}} > 0$ is chosen large enough), and therefore \eqref{form25} implies that 
\begin{equation}\label{form32} \# \{j \in \{1,2,3\} : T^{k}_{i} \cap B(y_{j},\alpha) \neq \emptyset\} \leq 1, \qquad i \in \{1,2\}. \end{equation}
By a similar argument,
\begin{equation}\label{form33} \# \{i \in \{1,2\} : T^{l}_{j} \cap B(x_{i},\alpha) \neq \emptyset\} \leq 1, \qquad j \in \{1,2,3\}. \end{equation}
We finally claim, as a consequence of \eqref{form32}-\eqref{form33} and the pigeonhole principle, that there exists a pair of balls $(B(x_{i_0},\alpha),B(y_{j_0},\alpha))$, for some $i_0 \in \{1,2\}$ and $j_0 \in \{1,2,3\}$ with the property
\begin{equation}\label{form35} T_{i_0}^{k} \cap B(y_{j_0},\alpha) = \emptyset \quad \text{and} \quad  T_{j_0}^{l} \cap B(x_{i_0},\alpha) = \emptyset. \end{equation}
This, by definition, yields
\begin{displaymath} G_{i_0}^{k} \stackrel{\eqref{form34}}{\subset} B(x_{i_0},\alpha) \, \setminus \, T_{j_0}^{l} \quad \text{and} \quad G_{j_0}^{l} \stackrel{\eqref{form34}}{\subset} B(y_{j_0},\alpha) \, \setminus \, T_{i_0}^{k}, \end{displaymath}
which (combined with \eqref{form34}) completes the proof of the inclusions \eqref{form31}, and Lemma \ref{lemma3}.

To prove \eqref{form35}, consider the bi-partite graph with $5$ vertices $\{v_{1},v_{2}\} \cup \{w_{1},w_{2},w_{3}\}$ and the following edge set. 
\begin{itemize}
\item For $i \in \{1,2\}$ and $j \in \{1,2,3\}$, the edge $(v_{i},w_{j})$ is included if $T_{i}^{k} \cap B(y_{j},\alpha) \neq \emptyset$. 
\item For $j \in \{1,2,3\}$ and $i \in \{1,2\}$, the edge $(w_{j},v_{i})$ is included if $T_{j}^{l} \cap B(x_{i},\alpha) \neq \emptyset$.
\end{itemize}
Now, \eqref{form32}-\eqref{form33} can be restated as follows: for $v_{i}$ fixed, there can be at most one edge $(v_{i},w_{j})$, and for $w_{i}$ fixed, there can be at most one edge $(w_{i},v_{j})$. Thus, the edge set contains at most $5$ edges. On the other hand, the product set $\{v_{1},v_{2}\} \times \{w_{1},w_{2},w_{3}\}$ contains $6$ elements, so there must be a pair $\{v_{i},w_{j}\}$ so that neither $(v_{i},w_{j})$ nor $(w_{j},v_{i})$ lies in the edge set. This is equivalent to \eqref{form35}. This completes the proof of Lemma \ref{lemma3}.

\section{The grid example}\label{sec:grid}
In this section we provide an example showing that Theorem \ref{main} is optimal in the sense that the assumption $\Fav(E) \geq \Fav(L) - \delta$ cannot be relaxed to $\Fav(E) \geq \delta$.
\begin{proposition}\label{gridProp}
    There exists an absolute constant $\delta > 0$ and a sequence of compact rectifiable sets $E_n\subset [0,1]^2\subset\mathbb{R}^2$ such that:
\begin{itemize}
    \item[\textup{(1)}] $\mathcal{H}^1(E_n)=1,$
    \item[\textup{(2)}] $\Fav(E_n)\ge \delta$,
    \item[\textup{(3)}] for any $\alpha\in [2n^{-2},1)$ and any curve $\Gamma$ with $\mathcal{H}^1(\Gamma\cap E_n)\ge\alpha$ we have $\mathcal{H}^1(\Gamma)\gtrsim \alpha n$.
\end{itemize}
In particular, property \textup{(3)} implies that if $M \geq 1$, then for any $M$-Lipschitz graph $\Gamma$ we have $\mathcal{H}^1(\Gamma\cap E_n)\lesssim Mn^{-1}.$
\end{proposition}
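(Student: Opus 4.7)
The plan is to take $E_n \subset [0,1]^2$ to be a grid of small crosses: for each $(i,j) \in \{1,\ldots,n\}^2$, let $H_{i,j}$ and $V_{i,j}$ denote the horizontal and vertical segments of length $1/(2n^2)$ centered at $p_{i,j} := (i/n, j/n)$, and set $E_n := \bigcup_{i,j}(H_{i,j} \cup V_{i,j})$. The $2n^2$ segments are pairwise disjoint once $n$ is large, so property (1) is immediate: $\mathcal{H}^1(E_n) = 2n^2 \cdot \tfrac{1}{2n^2} = 1$.

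For property (3), each cross $X_{i,j} := H_{i,j} \cup V_{i,j}$ lies in a single cell of the $1/n$-spaced grid. A rectifiable set $\Gamma \subset [0,1]^2$ of length $L$ has its $\sqrt{2}/n$-neighborhood of $\mathcal{L}^2$-measure at most $2\sqrt{2} L/n + O(1/n^2)$, and each cell met by $\Gamma$ is contained in this neighborhood; hence $\Gamma$ meets at most $\lesssim Ln + 1$ cells. Since $\mathcal{H}^1(X_{i,j}) = 1/n^2$, this forces
\[
\mathcal{H}^1(\Gamma \cap E_n) \lesssim (Ln + 1)/n^2,
\]
so for $\alpha \geq 2/n^2$ the hypothesis $\mathcal{H}^1(\Gamma \cap E_n) \geq \alpha$ yields $L \gtrsim \alpha n$.

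The main obstacle, I expect, is property (2): the Favard length lower bound. My plan is a second moment / Cauchy--Schwarz argument for the crossing count $N(\ell) := \#\{(i,j) : \ell \cap X_{i,j} \neq \emptyset\}$. Since the $H$- and $V$-projections of $X_{i,j}$ share midpoint $\pi_\theta(p_{i,j})$, one has $\Fav(X_{i,j}) = \int_0^\pi \max(|\cos\theta|,|\sin\theta|)/(2n^2)\,d\theta = \sqrt{2}/n^2$, and Crofton applied cross by cross gives
\[
\int N\, d\eta = \sum_{i,j}\Fav(X_{i,j}) = n^2 \cdot \frac{\sqrt{2}}{n^2} = \sqrt{2}.
\]
Expanding $N^2$ and peeling off the diagonal,
\[
\int N^2\, d\eta = \sqrt{2} + \sum_{(i,j) \neq (i',j')} \eta\bigl(\{\ell : \ell \cap X_{i,j} \neq \emptyset,\ \ell \cap X_{i',j'} \neq \emptyset\}\bigr).
\]
For each off-diagonal pair with grid difference $(p,q) := (i - i', j - j')$, a line meeting both crosses is forced into a direction arc of length $\lesssim 1/(n\sqrt{p^2 + q^2})$ and a perpendicular offset interval of length $\lesssim 1/n^2$, so the pair term is $\lesssim 1/(n^3 \sqrt{p^2 + q^2})$. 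The lattice estimate $\sum_{(p,q) \neq 0,\ |p|,|q| \leq n} 1/\sqrt{p^2 + q^2} \lesssim n$ then makes the total pair sum $O(1)$, so $\int N^2\, d\eta = O(1)$. Cauchy--Schwarz gives
\[
\Fav(E_n) = \eta(\{N \geq 1\}) \geq \frac{\bigl(\int N\, d\eta\bigr)^2}{\int N^2\, d\eta} \gtrsim 1,
\]
yielding (2). The hard part is executing the pair sum carefully; once this lattice computation is in place, the remainder is mechanical.
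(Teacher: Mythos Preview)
Your construction and arguments are correct; the paper takes a parallel but different route. It builds $E_n$ from $n^2$ \emph{circles} of circumference $1/n^2$ (rather than crosses) centred on the grid $\{k/(n+1):1\le k\le n\}^2$. For property~(3) the paper invokes Schul's lemma to Lipschitz-parametrise $\Gamma$ and sums the $\gtrsim 1/n$ gaps between consecutively visited circles; your tube-area count of cells is slicker, since the bound $\mathcal{L}^2(\Gamma(r)) \lesssim Lr + r^2$ for connected $\Gamma$ follows directly from $\mathcal{H}^1(\Gamma \cap B(x,r)) \geq r$ via a maximal $r$-net, with no parametrisation needed. For property~(2) the paper places normalised Lebesgue measure $\mu$ on the \emph{filled} discs, checks $I_1(\mu) \lesssim 1$ by a dyadic computation, and quotes the inequality $\Fav(\spt\mu) \gtrsim I_1(\mu)^{-1}$ from Mattila. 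Your second-moment argument on $N(\ell)$ is exactly the same Cauchy--Schwarz mechanism carried out by hand: the off-diagonal pair sum $n^{-1}\sum_{(p,q)\neq 0}(p^2+q^2)^{-1/2}$ is precisely the discrete $1$-energy of the grid, so the two computations are isomorphic. Your version is self-contained, the paper's is shorter once the energy--Favard inequality is taken as known.

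Two cosmetic points to fix: the two arms of a cross meet at $p_{i,j}$, so ``pairwise disjoint'' is literally false (though $\mathcal{H}^1$-harmless); and with centres at $(i/n,j/n)$ the cross at $(1,1)$ protrudes from $[0,1]^2$, so shift to $(i/(n+1),j/(n+1))$ as the paper does.
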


We begin the construction. Fix an integer $n\ge 2$, and let $[n]\coloneqq\{1,\dots,n\}$. For any $j=(k,l)\in [n]^2$ set
\begin{equation}\label{eq:grid}
    x_j= \bigg(\frac{k}{n+1},\, \frac{l}{n+1}\bigg)
\end{equation}
and
\begin{equation*}
    B_j = B\bigg(x_j,\, \frac{1}{2\pi n^2}\bigg).
\end{equation*}
Note that $B_j\subset [0,1]^2$ and if $i,j\in  [n]^2,\ i\neq j$, then
\begin{equation}\label{eq:separation}
    \dist(B_i,\, B_j)\ge \frac{1}{n+1} - \frac{2}{2\pi n^2} \ge \frac{1}{2n}.
\end{equation}
Define $S_j=\partial B_j$, and observe that $\mathcal{H}^1(S_j)=n^{-2}$. 

We define the set $E_n$ as
\begin{equation*}
    E_n \coloneqq \bigcup_{j\in [n]^2} S_j.
\end{equation*}
Since $\mathcal{H}^1(S_j)=n^{-2}$, we have $\mathcal{H}^1(E_n)=1.$ This verifies property (1) for $E_n$. It is also clear that $E_n$ is compact and rectifiable.

Now we check property (3). We will use the following result.
\begin{lemma}[Lemma 3.7 from \cite{schul2007subsets}]\label{lem:Schul}
    Any compact connected set $\Gamma\subset\R^2$ with $\mathcal{H}^1(\Gamma)<\infty$ can be parametrized with $\gamma:[0,1]\to\R^2$ such that $\gamma([0,1])=\Gamma$ and $\lip(\gamma)\le 32\,\mathcal{H}^1(\Gamma)$. 
\end{lemma}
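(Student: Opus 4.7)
The plan is to establish the lemma by combining two classical ingredients about continua of finite one-dimensional measure: first, that such a continuum is arcwise connected with controlled intrinsic distances; and second, that it admits a tree-like approximation at every scale that can be doubled into a closed Eulerian traversal. Passing to a uniform limit then produces the desired Lipschitz parametrization.

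First I would invoke the classical fact (in the style of Eilenberg--Harrold) that every compact connected $\Gamma \subset \R^2$ with $\mathcal{H}^1(\Gamma) < \infty$ is arcwise connected, and moreover the intrinsic (geodesic) distance $d_\Gamma(x,y)$ is well-defined for all $x,y \in \Gamma$ with $d_\Gamma(x,y) \leq \mathcal{H}^1(\Gamma)$, because any simple arc in $\Gamma$ is a subset of $\Gamma$ whose classical length coincides with its $\mathcal{H}^1$-measure.

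Next I would build scale-by-scale tree approximations. For each $k \geq 1$, let $N_k \subset \Gamma$ be a maximal $2^{-k}$-separated net; since $\mathcal{H}^1(\Gamma) < \infty$, a standard packing argument gives $\#N_k \lesssim 2^k \mathcal{H}^1(\Gamma)$. Form a graph with vertex set $N_k$ whose edges are simple arcs in $\Gamma$ joining sufficiently close neighbors, and thin to a spanning tree $T_k$ whose edge-arcs are pairwise essentially disjoint; then $\mathrm{length}(T_k) \leq \mathcal{H}^1(\Gamma) + O(2^{-k})$. The Eulerian tour on the doubled tree traverses each edge exactly twice and therefore has length $\leq 2\mathcal{H}^1(\Gamma) + O(2^{-k})$; reparametrizing it at constant speed on $[0,1]$ yields a curve $\gamma_k \colon [0,1] \to \Gamma$ with $\lip(\gamma_k) \leq 2\mathcal{H}^1(\Gamma) + o(1)$ whose image contains $N_k$.

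Finally I would pass to the limit. The family $\{\gamma_k\}$ is equi-Lipschitz and uniformly bounded, so by Arzel\`a--Ascoli a subsequence converges uniformly to some $\gamma \colon [0,1] \to \R^2$ with $\lip(\gamma) \leq 2\mathcal{H}^1(\Gamma)$. Because the nets $N_k$ become dense in $\Gamma$ and lie in $\gamma_k([0,1])$, the image $\gamma([0,1])$ is dense in $\Gamma$, and compactness of $[0,1]$ together with continuity of $\gamma$ forces $\gamma([0,1]) = \Gamma$. Since $2 \leq 32$, the desired bound $\lip(\gamma) \leq 32 \mathcal{H}^1(\Gamma)$ follows.

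The main obstacle is the tree-length bookkeeping in the second step, namely producing a spanning tree of arcs whose total $\mathcal{H}^1$-length is at most $\mathcal{H}^1(\Gamma) + O(2^{-k})$; this requires carefully allocating disjoint subarcs of $\Gamma$ to distinct edges, which in turn relies on the arcwise connectedness and the geodesic bound established in the first step. The non-sharp constant $32$ in the statement indicates that Schul's own proof likely proceeds via a less optimized dyadic construction (with additional slack from matching up scales), but either route delivers the stated Lipschitz bound.
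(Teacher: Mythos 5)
The paper does not prove this lemma at all: it is imported verbatim as Lemma~3.7 of Schul's paper \cite{schul2007subsets}, so there is no in-paper argument to compare against. Your sketch is the standard (and correct) route to this classical fact about continua of finite length: arcwise connectedness with length equal to $\mathcal{H}^1$ on simple arcs, a spanning tree of essentially disjoint arcs through a $2^{-k}$-net, an Eulerian tour of the doubled tree of length at most $2\,\mathcal{H}^1(\Gamma)$, constant-speed parametrization, and Arzel\`a--Ascoli. This in fact yields the sharp constant $2$ rather than $32$; the larger constant in Schul's statement comes from his dyadic construction, which is built to give additional multiresolution structure not needed here.

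One step in your write-up is stated too casually: you cannot simply ``thin to a spanning tree whose edge-arcs are pairwise essentially disjoint,'' because arcs in $\Gamma$ joining distinct pairs of net points may overlap heavily, and an arbitrary spanning tree of such a graph gives no length control. The correct (and standard) fix is the greedy attachment: order the net points $x_1,\dots,x_m$, and for each $i\ge 2$ take a simple arc from $x_i$ toward the already-constructed union $T^{(i-1)}$, truncated at its first hitting time of $T^{(i-1)}$. Each new arc then meets the previous union in exactly one point, the resulting edge-arcs have disjoint interiors inside $\Gamma$, and the total length is automatically at most $\mathcal{H}^1(\Gamma)$ (no $O(2^{-k})$ error is even needed). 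You flag this as the ``main obstacle,'' which is the right instinct; with the greedy construction in place of the thinning step, the argument is complete.
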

\begin{lemma}
    For any $\alpha\in [2n^{-2},1)$ and any curve $\Gamma$ with $\mathcal{H}^1(\Gamma\cap E_n)\ge\alpha$ we have $\mathcal{H}^1(\Gamma)\gtrsim \alpha n$.
\end{lemma}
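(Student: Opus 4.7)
The plan is to exploit two features of the grid $E_n$: the circles $S_j$ are disjoint with length exactly $n^{-2}$, and the enclosing balls $B_j$ are pairwise separated by at least $\tfrac{1}{2n}$ via \eqref{eq:separation}. Together these let me convert the mass hypothesis $\mathcal{H}^1(\Gamma \cap E_n) \geq \alpha$ into the statement that $\Gamma$ visits many well-separated places, from which Schul's parametrization lemma (Lemma \ref{lem:Schul}) extracts the desired length bound.

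First I would note that we may assume $\mathcal{H}^1(\Gamma) < \infty$, since otherwise the claim is vacuous. Letting $k$ denote the number of indices $j \in [n]^2$ with $\Gamma \cap S_j \neq \emptyset$, the disjointness of the $S_j$ gives $\alpha \leq \mathcal{H}^1(\Gamma \cap E_n) \leq k n^{-2}$, so $k \geq \alpha n^2$. The hypothesis $\alpha \geq 2 n^{-2}$ guarantees $k \geq 2$, which will be needed at the end.

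Next I would apply Lemma \ref{lem:Schul} to parametrize $\Gamma$ by $\gamma \colon [0,1] \to \R^2$ with $\lip(\gamma) \leq 32\,\mathcal{H}^1(\Gamma)$. For each of the $k$ visited circles I pick one preimage $t_j \in [0,1]$ with $\gamma(t_j) \in S_j$. By \eqref{eq:separation}, the $k$ points $\gamma(t_j)$ are pairwise at distance $\geq \tfrac{1}{2n}$, so by the Lipschitz bound the $t_j$ are pairwise $\geq \tfrac{1}{64 n\,\mathcal{H}^1(\Gamma)}$ apart in $[0,1]$. Ordering the $t_j$ and summing the $k-1$ consecutive gaps yields
\begin{equation*}
1 \;\geq\; \frac{k-1}{64 n\,\mathcal{H}^1(\Gamma)},
\end{equation*}
whence $\mathcal{H}^1(\Gamma) \geq (k-1)/(64 n) \geq k/(128 n) \geq \alpha n / 128$, which is the claimed bound.

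I do not expect a genuine obstacle here: the argument is essentially pigeonhole plus a standard parametrization. The only conceptual move is recognizing that Schul's lemma is the right tool for transferring \emph{visits many balls at mutual distance $\gtrsim n^{-1}$} into \emph{has length $\gtrsim k n^{-1}$}; without it one would have to argue directly with the connected structure of $\Gamma$, which is slightly messier but amounts to the same calculation.
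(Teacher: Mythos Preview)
Your proof is correct and follows essentially the same approach as the paper: count the visited circles via $\mathcal{H}^1(S_j)=n^{-2}$, apply Schul's parametrization lemma, and sum the $k-1$ consecutive gaps between the preimages using the separation bound \eqref{eq:separation}. The only cosmetic difference is that the paper first assumes without loss of generality that $\gamma(0),\gamma(1)\in E_n$ so that the gaps sum to exactly $1$, whereas you simply bound the sum by $1$; both yield the same conclusion.
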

\begin{proof}
    Suppose that $\alpha\in [2n^{-2},1)$ and let $\Gamma$ be a curve with $\mathcal{H}^1(\Gamma\cap E_n)\ge\alpha$. 
    Since each circle $S_j$ comprising $E_n$ has length $n^{-2}$, we get that $\Gamma$ intersects at least $\alpha n^2$ different circles. Let $J_0\subset[n]^2$ be the set of indices such that for $j\in J_0$ we have $\Gamma\cap S_j\neq\varnothing$, so that
    \begin{equation}\label{eq:J0}
        N\coloneqq \#J_0\ge \alpha n^2.
    \end{equation}
    
    To estimate $\mathcal{H}^1(\Gamma)$, we are going to use \eqref{eq:J0} together with the fact that the circles $S_j$ are centered on a well-separated grid \eqref{eq:grid}, \eqref{eq:separation}. We provide the details below.
    
    Let $\gamma$ be the parametrisation of the curve $\Gamma$ given by Lemma \ref{lem:Schul}. Without loss of generality, we may assume that the curve $\Gamma$ begins and ends on $E_n$, i.e., $\gamma(0),\gamma(1)\in \Gamma\cap E_n$. For all $j\in J_0$ we choose a point $y_j\in \Gamma \cap S_j$, and let $t_j\in [0,1]$ be such that $\gamma(t_j)=y_j$ ($\gamma$ might be non-injective, in which case $t_j$ is non-unique, but in this case we pick $t_{j}$ arbitrarily among the admissible options). The only constraint we make on our choice of $\{y_j\}_{j\in J_0}$ is so that $\gamma(0), \gamma(1)\in \{y_j\}_{j\in J_0}$. For convenience, we relabel the points $t_j$ in ``ascending order'': for all $i\in \{1,\dots,N\}$ we set $t_i:=t_j$ for some $j\in J_0$, in such a way that $t_1< t_2<\dots<t_{N}$. We relabel in a similar way $y_j$ and $S_j$. 
    
    Recalling that the circles $S_j$ are centered on a grid \eqref{eq:grid}, it follows from the separation property \eqref{eq:separation} that for any $i\in \{1,\dots,N\}$
    \begin{equation*}
        \frac{1}{2n}\le |y_{i+1}-y_{i}|= |\gamma(t_{i+1})-\gamma(t_{i})|\le \lip(\gamma)\cdot|t_{i+1}-t_{i}| = \lip(\gamma)\cdot(t_{i+1}-t_{i}).
    \end{equation*}
    Summing over $i\in \{1,\dots,N-1\}$ we get 
    \begin{equation*}
        \frac{N-1}{2n}\le \lip(\gamma)\cdot(t_{N}-t_{1})\le 32\,\mathcal{H}^1(\Gamma)\cdot(t_{N}-t_{1}).
    \end{equation*}
    Since we we assumed $\gamma(0), \gamma(1)\in \{y_j\}_{j\in J_0}$, we get that $t_{N}=1$ and $t_{1}=0.$ Thus,
    \begin{equation*}
        32\,\mathcal{H}^1(\Gamma) \ge \frac{N-1}{2n} \overset{\eqref{eq:J0}}{\ge} \frac{\alpha n^2-1}{2n}\ge \frac{\alpha n}{4}.
    \end{equation*}
 This completes the proof of the lemma. \end{proof}

It remains to prove the property (2), that is, $\Fav(E_n)\ge \delta$. Let 
\begin{equation*}
    G_n = \bigcup_{j\in [n]^2} B_j,
\end{equation*}
so that $E_n=\partial G_n$. Note that $\Fav(E_n)=\Fav(G_n)$. We define an auxiliary measure
\begin{equation*}
    \mu = \mu_n=\frac{1}{\mathcal{L}^2(G_n)}\mathcal{L}^2|_{G_n}.
\end{equation*}
Recall that the $1$-energy of $\mu$ is defined as
\begin{equation*}
    I_1(\mu) = \iint \frac{1}{|x-y|}\, d\mu(x)d\mu(y).
\end{equation*}
\begin{lemma}
    We have 
    \begin{equation*}
        I_1(\mu) \lesssim 1.
    \end{equation*}
    As a consequence, 
    \begin{equation}\label{eq:Favbound}
        \Fav(E_n)=\Fav(G_n)\gtrsim 1.
    \end{equation}
\end{lemma}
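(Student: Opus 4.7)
The plan is to bound the $1$-energy by splitting it into diagonal and off-diagonal contributions, then convert the energy bound into a Favard length bound via the classical Fourier identity relating $I_1(\mu)$ to the $L^2$-norms of linear projections. The equality $\Fav(E_n) = \Fav(G_n)$ is immediate because each circle $S_j = \partial B_j$ has the same projection onto any line as the disk $B_j$ it bounds.

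For the energy bound, first note $\mathcal{L}^2(G_n) = n^2 \pi r_0^2 = (4\pi n^{2})^{-1}$ with $r_0 = (2\pi n^2)^{-1}$, so $\mu$ is a probability measure with $\mu(B_j) = n^{-2}$. Split
\[
I_1(\mu) = \sum_{j \in [n]^2} \iint_{B_j \times B_j} \frac{d\mu(x) \, d\mu(y)}{|x-y|} + \sum_{i \neq j} \iint_{B_i \times B_j} \frac{d\mu(x) \, d\mu(y)}{|x-y|}.
\]
For the diagonal sum, the elementary estimate $\iint_{B(0,r)^2} |x-y|^{-1} \, dx \, dy \lesssim r^3$ yields a total contribution $\lesssim n^2 \cdot r_0^3 / \mathcal{L}^2(G_n)^2 \sim 1$. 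For the off-diagonal sum, I would strengthen \eqref{eq:separation} to $|x-y| \gtrsim |i-j|/n$ for $x \in B_i$, $y \in B_j$ with $i \neq j$ (using $r_0 \ll 1/n$ together with the grid spacing $|x_i - x_j| \sim |i-j|/n$), yielding
\[
\lesssim \sum_{i \neq j} \mu(B_i)\mu(B_j) \cdot \frac{n}{|i-j|} = \frac{1}{n^{3}} \sum_{i \neq j \in [n]^2} \frac{1}{|i-j|}.
\]
A standard annular decomposition bounds the inner lattice sum by $n^2 \cdot \sum_{d=1}^{O(n)} \frac{O(d)}{d} = O(n^3)$, since the annulus $\{j \in \Z^2 : d \leq |i - j| < d+1\}$ contains $\sim d$ points, so this piece is also $\lesssim 1$ and $I_1(\mu) \lesssim 1$.

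For the final implication, I would invoke the standard Fourier identity (see e.g.\ \cite{MR867284}) that follows from Plancherel, polar coordinates, and the projection slice formula $\widehat{\pi_\theta \mu}(t) = \hat\mu(t(\cos\theta, \sin\theta))$ together with the Riesz representation $\widehat{|x|^{-1}}(\xi) = c|\xi|^{-1}$ in $\R^{2}$:
\[
\int_0^\pi \|\pi_\theta \mu\|_{L^{2}(\R)}^{2} \, d\theta \sim I_1(\mu).
\]
For every $\theta$, since $\pi_\theta\mu$ is a probability measure with density supported on $\pi_\theta(G_n)$, Cauchy--Schwarz gives $1 = (\pi_\theta \mu)(\R)^2 \leq \mathcal{H}^1(\pi_\theta(G_n)) \cdot \|\pi_\theta \mu\|_{L^2}^{2}$. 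Integrating and applying Cauchy--Schwarz once more in $\theta$ yields
\[
\pi^{2} \leq \Fav(G_n) \cdot \int_0^\pi \|\pi_\theta \mu\|_{L^2}^{2} \, d\theta \lesssim \Fav(G_n) \cdot I_1(\mu) \lesssim \Fav(G_n),
\]
so $\Fav(E_n) = \Fav(G_n) \gtrsim 1$. The only point requiring any care is the $O(n^3)$ size of the off-diagonal lattice sum, as a logarithmic loss would destroy the bound; but this is handled by the standard annular count described above. Both the diagonal estimate and the energy-to-Favard conversion are entirely routine once this is in hand.
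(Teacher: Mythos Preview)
Your proof is correct and follows essentially the same strategy as the paper: split $I_1(\mu)$ into diagonal and off-diagonal contributions and then convert the energy bound into a Favard-length lower bound. The only cosmetic differences are that the paper handles the off-diagonal piece by comparing $\mu|_{B_j}$ to $\mathcal{L}^2|_{Q_j}$ (the grid square containing $B_j$) and reducing to $\int_{[-1,2]^2}|x-z|^{-1}\,d\mathcal{L}^2(z)\lesssim 1$, whereas you use a direct lattice sum, and the paper cites the energy-to-Favard inequality as a black box (Theorem~4.3 in \cite{mattila2015fourier}) rather than spelling out the Cauchy--Schwarz argument.
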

\begin{proof}
    We write
    \begin{multline*}
        I_1(\mu) = \iint \frac{1}{|x-y|}\, d\mu(x)d\mu(y) = \sum_{i,j\in[n]^2} \int_{B_i}\int_{B_j}\frac{1}{|x-y|}\, d\mu(x)d\mu(y)\\
        \sum_{i\in[n]^2} \int_{B_i}\int_{B_i}\frac{1}{|x-y|}\, d\mu(x)d\mu(y) + \sum_{i,j\in[n]^2,\, i\neq j} \int_{B_i}\int_{B_j}\frac{1}{|x-y|}\, d\mu(x)d\mu(y) = A_1 + A_2.
    \end{multline*}
    To estimate $A_1$ we note that for any $i\in[n]^2$ and any fixed $x\in B_i$
    \begin{multline*}
        \int_{B_i}\frac{1}{|x-y|}\, d\mu(y) \le \sum_{k=\lfloor\log_2 n^2\rfloor}^\infty \int_{B(x,2^{-k})\setminus B(x,2^{-k-1})}\frac{1}{|x-y|}\, d\mu(y)\\
        \sim \sum_{k=\lfloor\log_2 n^2\rfloor}^\infty 2^k \mu(B(x,2^{-k})\setminus B(x,2^{-k-1}))
        \lesssim \frac{1}{\mathcal{L}^2(G_n)} \sum_{k=\lfloor\log_2 n^2\rfloor}^\infty 2^k \mathcal{L}^2(B(x,2^{-k}))\\
        \sim n^2 \sum_{k=\lfloor\log_2 n^2\rfloor}^\infty 2^k\cdot 2^{-2k}\sim 1.
    \end{multline*}
    Hence, 
    \begin{equation*}
        A_1 = \sum_{i\in[n]^2} \int_{B_i}\int_{B_i}\frac{1}{|x-y|}\, d\mu(x)d\mu(y) \lesssim \sum_{i\in[n]^2} \mu(B_i) = 1.
    \end{equation*}
    
    We move on to estimating $A_2$. Let $Q_j$ denote the square centered at $x_j$ with sidelength $1/(n+1)$. Note that $B_j\subset Q_j$, and the squares $Q_j,\ j\in [n]^2$ are pairwise disjoint. If $x\in B_i$ and $y\in B_j$, with $i\neq j$, then $|x-y|\sim \dist(B_i, B_j)\sim |x-z|$ for any $z\in Q_j$. It follows that for a fixed $x\in B_i$
    \begin{equation*}
        \int_{B_j}\frac{1}{|x-y|}\, d\mu(y) \sim \dist(B_i, B_j)^{-1}\,\mu(B_j)\sim \dist(B_i, B_j)^{-1}\,\mathcal{L}^2(Q_j)\sim \int_{Q_j}\frac{1}{|x-z|}\, d\mathcal{L}^2(z)
    \end{equation*}
    Summing over $j\in[n]^2\setminus \{i\}$ yields
    \begin{multline*}
        \sum_{j\in[n]^2\setminus \{i\}} \int_{B_j}\frac{1}{|x-y|}\, d\mu(y) \sim \sum_{j\in[n]^2\setminus \{i\}} \int_{Q_j}\frac{1}{|x-z|}\, d\mathcal{L}^2(z)\le \int_{[-1,2]^2}\frac{1}{|x-z|}\, d\mathcal{L}^2(z)\\
        \lesssim\sum_{k=-1}^{\infty} \int_{B(x,2^{-k})\setminus B(x,2^{-k-1})} 2^k\, d\mathcal{L}^2(z) \lesssim 1.
    \end{multline*}
    Thus,
    \begin{equation*}
        A_2 =\sum_{i\in[n]^2} \int_{B_i}\bigg(\sum_{j\in[n]^2\setminus\{i\}}\int_{B_j}\frac{1}{|x-y|}\, d\mu(y)\bigg)d\mu(x)\lesssim \sum_{i\in[n]^2}\mu(B_i)=1.
    \end{equation*}
    It follows that $I_1(\mu)\lesssim 1.$ 
    
    To see \eqref{eq:Favbound}, we use Theorem 4.3 from \cite{mattila2015fourier} to conclude that
    \begin{equation*}
        \Fav(E_n)=\Fav(G_n)\gtrsim \frac{1}{I_1(\mu)}\gtrsim 1. \end{equation*}
    This concludes the proof of Proposition \ref{gridProp}. 
\end{proof}

\appendix

\section{Lines spanned by rectifiable curves}
\label{section:lines-spanned-by-rectifiable-curves}

We state and prove a generalization of \eqref{eq:eta-L-exact}, which was mentioned in Remark \ref{remark:eta-L-exact}.

\begin{lemma}
\label{lemma:lines-spanned-by-rectifiable-curves}
Let $\gamma_1, \gamma_2 \subset \R^2$ be rectifiable curves. For $\mathcal{H}^1$ almost every $x \in \gamma_i$, let $\tau_i(x)$ denote the unit tangent vector to $\gamma_i$ at $x$. (The choice of direction is irrelevant.) Then for any $G_1 \subset \gamma_1$ and $G_2 \subset \gamma_2$, we have
\begin{align*}
&\int_{\mathcal{A}}
\# \{(x_1, x_2) \in G_1 \times G_2 : x_1 \neq x_2 \text{ and } x_1, x_2 \in \ell\} \, d\eta(\ell)
\\
&\qquad=
\iint_{G_1 \times G_2} \frac{|\pi_{\theta(x_1, x_2)}(\tau_1(x_1))| \, |\pi_{\theta(x_1, x_2)}(\tau_2(x_2))|}{|x_1-x_2|} \, d(\mathcal{H}^1 \times \mathcal{H}^1)(x_1, x_2)
\end{align*}
where $\theta(x_1, x_2)$ denotes the angle $\theta$ such that $\pi_\theta(x_1) = \pi_\theta(x_2)$.
\end{lemma}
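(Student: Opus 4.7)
The plan is to view this identity as an instance of the area formula for a natural parameterization of lines by pairs of distinct points. I would introduce the map
\[
F(x_1, x_2) := (\theta(x_1, x_2),\, \pi_{\theta(x_1, x_2)}(x_1))
\]
from $\{(x_1, x_2) \in G_1 \times G_2 : x_1 \neq x_2\}$ into $[0, \pi) \times \R$, which sends a pair of distinct points to the $(\theta, t)$-coordinates of the unique line joining them. Unfolding the definition of $\eta$, the left-hand side becomes
\[
\int_0^\pi \int_\R \#\{(x_1, x_2) \in G_1 \times G_2 : x_1 \neq x_2,\ x_1, x_2 \in \pi_\theta^{-1}\{t\}\} \, dt \, d\theta = \int_{[0,\pi) \times \R} \# F^{-1}(\theta, t) \, d\mathcal{L}^2(\theta, t),
\]
which is exactly the multiplicity integral that the area formula handles.

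Next I would invoke the area formula for rectifiable sets, see for instance \cite[Theorem 3.2.22]{federer}. Since $\gamma_1, \gamma_2$ are rectifiable curves, the product $G_1 \times G_2$ is a $2$-rectifiable subset of $\R^4$ whose $\mathcal{H}^2$ measure coincides with $\mathcal{H}^1 \times \mathcal{H}^1$. The map $F$ is smooth off the diagonal $\{x_1 = x_2\}$ and Lipschitz on each piece $\{(x_1, x_2) : |x_1 - x_2| \geq \delta\}$. Applying the area formula on these pieces and letting $\delta \to 0$ via monotone convergence then converts the multiplicity integral into
\[
\int_{G_1 \times G_2} J_F(x_1, x_2) \, d(\mathcal{H}^1 \times \mathcal{H}^1)(x_1, x_2),
\]
where $J_F$ is the $2$-dimensional Jacobian of $F$ computed in the tangent plane of $G_1 \times G_2$, which at $(\mathcal{H}^1 \times \mathcal{H}^1)$-a.e.\ point is spanned by $(\tau_1(x_1), 0)$ and $(0, \tau_2(x_2))$.

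It remains to compute $J_F$ at a generic regular point. Using a rigid motion I would arrange $x_1 = 0$ and $x_2 = (r, 0)$ with $r = |x_1 - x_2|$, so that the spanned line is the $x$-axis, $\theta_0 = \pi/2$ and $e_{\theta_0} = (0, 1)$. Plugging arc-length parameterizations $x_1(s) = s \tau_1 + O(s^2)$ and $x_2(u) = (r, 0) + u \tau_2 + O(u^2)$ into $F$ and expanding to first order yields
\[
\partial_s \theta = -\tau_1^y/r, \quad \partial_u \theta = \tau_2^y/r, \quad \partial_s t = \tau_1^y, \quad \partial_u t = 0,
\]
so that $J_F = |\tau_1^y \tau_2^y|/r = |\pi_{\theta_0}(\tau_1)|\,|\pi_{\theta_0}(\tau_2)|/|x_1 - x_2|$, exactly matching the integrand on the right-hand side.

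I expect the main obstacle to be the setup rather than the Jacobian calculation: one must justify that $G_1 \times G_2$ is bona-fide $2$-rectifiable with the product measure equal to $\mathcal{H}^2$, that $F$ is locally Lipschitz away from the diagonal (with constants blowing up like $1/\delta$ but uniform on each piece $\{|x_1 - x_2| \geq \delta\}$), and that the diagonal has zero product measure so that it may be safely excluded. Once these measure-theoretic points are settled, the area formula applies and the first-order calculation above concludes the proof.
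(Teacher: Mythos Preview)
Your proposal is correct and follows essentially the same route as the paper: both express the left-hand side as a multiplicity integral for the map sending a pair of distinct points to the $(\theta,t)$-coordinates of the line through them, apply an area/change-of-variables formula, and then compute the Jacobian to match the right-hand side. The only cosmetic differences are that the paper pulls back to arclength parameters (so the map is between open subsets of $\R^{2}$ and only the classical change-of-variables is needed, rather than the area formula on a $2$-rectifiable set in $\R^{4}$), and it computes the Jacobian by implicit differentiation of the defining relation $\pi_{\theta}(\phi_{1}(s_{1})) = \pi_{\theta}(\phi_{2}(s_{2})) = t$ rather than by normalizing coordinates as you do; both computations yield the same integrand.
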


\begin{proof}
Let $\phi_i(s)$ be a parametrization of $\gamma_i$ by arclength. Consider the map $\Psi : (s_1, s_2) \mapsto (\theta, t)$ defined implicitly by 
\begin{align}
\label{eq:def-Psi}    
\pi_\theta(\phi_1(s_1)) = \pi_\theta(\phi_2(s_2)) = t.
\end{align}
By the change of variables formula,
\begin{align*}
&
\int_{\mathcal{A}}
\# \{(x_1, x_2) \in G_1 \times G_2 : x_1 \neq x_2 \text{ and } x_1, x_2 \in \ell\} \, d\eta(\ell)
\\
&=
\int_{[0,\pi] \times \mathbb{R}}
\# \{(x_1, x_2) \in G_1 \times G_2 : x_1 \neq x_2 \text{ and } x_1, x_2 \in \pi_\theta^{-1}(t)\} \, d\mathcal{H}^2(\theta,t)
\\
&=\iint_{s_1 \in \phi_1^{-1}(G_1), s_2 \in \phi_2^{-1}(G_2)}
J\Psi(s_1, s_2)
\, ds_1 \, ds_2,
\end{align*}
where $J\Psi$ denotes the Jacobian determinant of $\Psi$. (Note that the set $\{(s_1, s_2) : \phi_1(s_1) = \phi_2(s_2)\}$ has $\mathcal{H}^2$-measure zero.) 

We now prove that
\begin{align}
\label{eq:jacobian}
J\Psi(s_1, s_2)
:=
\operatorname{abs}
\begin{vmatrix}
\partial_{s_1} \theta & \partial_{s_2} \theta 
\\
\partial_{s_1} t & \partial_{s_2} t 
\end{vmatrix}
=
\frac{|\pi_{\theta(s_1,s_2)}(\gamma_1'(s_1))|\ |\pi_{\theta(s_1, s_2)}(\gamma_2'(s_2))|}{|\gamma_1(s_1) - \gamma_2(s_2)|}.
\end{align}
Note that this would finish the proof of the lemma. To show \eqref{eq:jacobian}, define $e_\theta = (\cos\theta, \sin\theta)$ and $e_\theta^\perp = \frac{d}{d\theta} e_\theta = (-\sin\theta, \cos\theta)$. By differentiating \eqref{eq:def-Psi} with respect to $s_1$ and $s_2$, we obtain
\begin{alignat*}{2}
e_\theta \cdot \phi_1'(s_1) + e_\theta^\perp \cdot \phi_1(s_1) \partial_{s_1}\theta 
&= 
e_\theta^\perp \cdot \phi_2(s_2) \partial_{s_1}\theta 
&&= 
\partial_{s_1}t
\\
e_\theta \cdot \phi_2'(s_2) + e_\theta^\perp \cdot \phi_2(s_2) \partial_{s_2}\theta 
&= 
e_\theta^\perp \cdot \phi_1(s_1) \partial_{s_2}\theta 
&&= 
\partial_{s_2}t.
\end{alignat*}
The two equalities on the left give
\begin{align*}
|\partial_{s_i}\theta|
=
\frac{|e_\theta \cdot \phi_i'(s_i)|}{|e_\theta^\perp \cdot (\phi_1(s_1) - \phi_2(s_2))|}
\text{ for } i = 1,2
\end{align*}
which, when combined with the two equalities on the right, give
\begin{align*}
J\Psi(s_1, s_2)
=
|\partial_{s_1} \theta| \, |\partial_{s_2} \theta| \, |e_\theta^\perp \cdot (\phi_1(s_1) - \phi_2(s_2))|
=
\frac{|e_\theta \cdot \phi_1'(s_1)|\, |e_\theta \cdot \phi_2'(s_2)|}{|e_\theta^\perp \cdot (\phi_1(s_1) - \phi_2(s_2))|}.
\end{align*}
Finally, observe that $e_\theta \cdot (\phi_1(s_1) - \phi_2(s_2)) = 0$ by the definition of $\Psi$, which implies $|e_\theta^\perp \cdot (\phi_1(s_1) - \phi_2(s_2))| = |\phi_1(s_1) - \phi_2(s_2)|$. This completes the proof of \eqref{eq:jacobian}. 
\end{proof}

\bibliographystyle{plain}
\bibliography{references}

\end{document}